\DeclareMathAlphabet\gothic{U}{euf}{m}{n}
\def\eqnarray{\stepcounter{equation}\let\@currentlabel=\theequation
\global\@eqnswtrue
\tabskip\@centering\let\\=\@eqncr
$$\halign to \displaywidth\bgroup\hfil\global\@eqcnt\z@
  $\displaystyle\tabskip\z@{##}$&\global\@eqcnt\@ne
  \hfil$\displaystyle{{}##{}}$\hfil
  &\global\@eqcnt\tw@ $\displaystyle{##}$\hfil
  \tabskip\@centering&\llap{##}\tabskip\z@\cr}
\def\endeqnarray{\@@eqncr\egroup
      \global\advance\c@equation\m@ne$$\global\@ignoretrue}
\def\@yeqncr{\@ifnextchar [{\@xeqncr}{\@xeqncr[5pt]}}
\begin{document}
\bibliographystyle{tom}

\newtheorem{lemma}{Lemma}[section]
\newtheorem{thm}[lemma]{Theorem}
\newtheorem{cor}[lemma]{Corollary}
\newtheorem{prop}[lemma]{Proposition}

\theoremstyle{definition}

\newtheorem{remark}[lemma]{Remark}
\newtheorem{exam}[lemma]{Example}
\newtheorem{definition}[lemma]{Definition}

\newcommand{\gota}{\gothic{a}}
\newcommand{\gotb}{\gothic{b}}
\newcommand{\gotc}{\gothic{c}}
\newcommand{\gote}{\gothic{e}}
\newcommand{\gotf}{\gothic{f}}
\newcommand{\gotg}{\gothic{g}}
\newcommand{\gothh}{\gothic{h}}
\newcommand{\gotk}{\gothic{k}}
\newcommand{\gotm}{\gothic{m}}
\newcommand{\gotn}{\gothic{n}}
\newcommand{\gotp}{\gothic{p}}
\newcommand{\gotq}{\gothic{q}}
\newcommand{\gotr}{\gothic{r}}
\newcommand{\gots}{\gothic{s}}
\newcommand{\gott}{\gothic{t}}
\newcommand{\gotu}{\gothic{u}}
\newcommand{\gotv}{\gothic{v}}
\newcommand{\gotw}{\gothic{w}}
\newcommand{\gotz}{\gothic{z}}
\newcommand{\gotA}{\gothic{A}}
\newcommand{\gotB}{\gothic{B}}
\newcommand{\gotG}{\gothic{G}}
\newcommand{\gotL}{\gothic{L}}
\newcommand{\gotS}{\gothic{S}}
\newcommand{\gotT}{\gothic{T}}

\newcounter{teller}
\renewcommand{\theteller}{(\alph{teller})}
\newenvironment{tabel}{\begin{list}%
{\rm  (\alph{teller})\hfill}{\usecounter{teller} \leftmargin=1.1cm
\labelwidth=1.1cm \labelsep=0cm \parsep=0cm}
                      }{\end{list}}

\newcounter{tellerr}
\renewcommand{\thetellerr}{(\roman{tellerr})}
\newenvironment{tabeleq}{\begin{list}%
{\rm  (\roman{tellerr})\hfill}{\usecounter{tellerr} \leftmargin=1.1cm
\labelwidth=1.1cm \labelsep=0cm \parsep=0cm}
                         }{\end{list}}

\newcounter{tellerrr}
\renewcommand{\thetellerrr}{(\Roman{tellerrr})}
\newenvironment{tabelR}{\begin{list}%
{\rm  (\Roman{tellerrr})\hfill}{\usecounter{tellerrr} \leftmargin=1.1cm
\labelwidth=1.1cm \labelsep=0cm \parsep=0cm}
                         }{\end{list}}

\newcounter{proofstep}
\newcommand{\nextstep}{\refstepcounter{proofstep}\vertspace \par 
          \noindent{\bf Step \theproofstep} \hspace{5pt}}
\newcommand{\firststep}{\setcounter{proofstep}{0}\nextstep}

\newcommand{\Ni}{\mathds{N}}
\newcommand{\Qi}{\mathds{Q}}
\newcommand{\Ri}{\mathds{R}}
\newcommand{\Ci}{\mathds{C}}
\newcommand{\Ti}{\mathds{T}}
\newcommand{\Zi}{\mathds{Z}}
\newcommand{\Fi}{\mathds{F}}

\renewcommand{\proofname}{{\bf Proof}}

\newcommand{\vertspace}{\vskip10.0pt plus 4.0pt minus 6.0pt}

\newcommand{\simh}{{\stackrel{{\rm cap}}{\sim}}}
\newcommand{\ad}{{\mathop{\rm ad}}}
\newcommand{\Ad}{{\mathop{\rm Ad}}}
\newcommand{\alg}{{\mathop{\rm alg}}}
\newcommand{\clalg}{{\mathop{\overline{\rm alg}}}}
\newcommand{\Aut}{\mathop{\rm Aut}}
\newcommand{\arccot}{\mathop{\rm arccot}}
\newcommand{\capp}{{\mathop{\rm cap}}}
\newcommand{\rcapp}{{\mathop{\rm rcap}}}
\newcommand{\diam}{\mathop{\rm diam}}
\newcommand{\divv}{\mathop{\rm div}}
\newcommand{\dom}{\mathop{\rm dom}}
\newcommand{\codim}{\mathop{\rm codim}}
\newcommand{\RRe}{\mathop{\rm Re}}
\newcommand{\IIm}{\mathop{\rm Im}}
\newcommand{\tr}{{\mathop{\rm Tr \,}}}
\newcommand{\Tr}{{\mathop{\rm Tr \,}}}
\newcommand{\Vol}{{\mathop{\rm Vol}}}
\newcommand{\card}{{\mathop{\rm card}}}
\newcommand{\rank}{\mathop{\rm rank}}
\newcommand{\supp}{\mathop{\rm supp}}
\newcommand{\sgn}{\mathop{\rm sgn}}
\newcommand{\essinf}{\mathop{\rm ess\,inf}}
\newcommand{\esssup}{\mathop{\rm ess\,sup}}
\newcommand{\Int}{\mathop{\rm Int}}
\newcommand{\lcm}{\mathop{\rm lcm}}
\newcommand{\loc}{{\rm loc}}
\newcommand{\HS}{{\rm HS}}
\newcommand{\Trn}{{\rm Tr}}
\newcommand{\n}{{\rm N}}
\newcommand{\WOT}{{\rm WOT}}

\newcommand{\at}{@}

\newcommand{\mod}{\mathop{\rm mod}}
\newcommand{\spann}{\mathop{\rm span}}
\newcommand{\one}{\mathds{1}}

\hyphenation{groups}
\hyphenation{unitary}

\newcommand{\tfrac}[2]{{\textstyle \frac{#1}{#2}}}

\newcommand{\ca}{{\cal A}}
\newcommand{\cb}{{\cal B}}
\newcommand{\cc}{{\cal C}}
\newcommand{\cd}{{\cal D}}
\newcommand{\ce}{{\cal E}}
\newcommand{\cf}{{\cal F}}
\newcommand{\ch}{{\cal H}}
\newcommand{\chs}{{\cal HS}}
\newcommand{\ci}{{\cal I}}
\newcommand{\ck}{{\cal K}}
\newcommand{\cl}{{\cal L}}
\newcommand{\cm}{{\cal M}}
\newcommand{\cn}{{\cal N}}
\newcommand{\co}{{\cal O}}
\newcommand{\cp}{{\cal P}}
\newcommand{\cs}{{\cal S}}
\newcommand{\ct}{{\cal T}}
\newcommand{\cx}{{\cal X}}
\newcommand{\cy}{{\cal Y}}
\newcommand{\cz}{{\cal Z}}

\thispagestyle{empty}

\vspace*{1cm}
\begin{center}
{\Large\bf The Dirichlet-to-Neumann operator on $C(\partial \Omega)$
} \\[10mm]

\large W. Arendt$^1$ and A.F.M. ter Elst$^2$

\end{center}

\vspace{5mm}

\begin{center}
{\bf Abstract}
\end{center}

\begin{list}{}{\leftmargin=1.8cm \rightmargin=1.8cm \listparindent=10mm 
   \parsep=0pt}
\item
Let $\Omega \subset \Ri^d$ be an open bounded set with Lipschitz boundary~$\Gamma$.
Let $D_V$ be the Dirichlet-to-Neumann operator with respect to a purely second-order
symmetric divergence form operator with real Lipschitz continuous coefficients
and a positive potential~$V$.
We show that the semigroup generated by $-D_V$ leaves $C(\Gamma)$
invariant and that the restriction of this semigroup to $C(\Gamma)$
is a $C_0$-semigroup.
We investigate positivity and spectral properties of this semigroup.
We also present results where $V$ is allowed to be negative.
Of independent interest is a new criterium for semigroups to have 
a continuous kernel.

\end{list}

\vspace{4cm}
\noindent
July 2017

\vspace{5mm}
\noindent
AMS Subject Classification: 47D06, 35J57, 35J15, 35K08.

\vspace{5mm}
\noindent
Keywords: 
Dirichlet-to-Neumann operator, 
$C_0$-semigroup,
Robin boundary conditions,
irreducible semigroup.

\vspace{15mm}

\noindent
{\bf Home institutions:}    \\[3mm]
\begin{tabular}{@{}cl@{\hspace{10mm}}cl}
1. & Institute of Applied Analysis  & 
  2. & Department of Mathematics   \\
& University of Ulm   & 
  & University of Auckland   \\
& Helmholtzstr.\ 18 & 
  & Private bag 92019  \\
& 89081 Ulm & 
  & Auckland  \\
& Germany  & 
  & New Zealand  \\[8mm]
\end{tabular}

\newpage

\section{Introduction} \label{Sdtnc1}

Let $\Omega \subset \Ri^d$ be an open set with Lipschitz boundary~$\Gamma$.
The Dirichlet-to-Neumann operator $D_0$ is the self-adjoint operator
that is defined in $L_2(\Gamma)$ as follows.
Let $\varphi,\psi \in L_2(\Gamma)$.
Then $\varphi \in \dom(D_0)$ and 
$D_0 \varphi = \psi$ if and only if
there exists a $u \in H^1(\Omega)$ such that $\Delta u = 0$ weakly 
on $\Omega$, with $\Tr u = \varphi$ and the weak normal derivative exists
with $\partial_\nu u = \psi$.
It turns out that the semigroup $S$ generated by $-D_0$ is submarkovian.
Hence it extends consistently to a contraction semigroup $S^{(p)}$ 
on $L_p(\Gamma)$ for all $p \in [1,\infty]$ and it is a $C_0$-semigroup
if $p \in [1,\infty)$.
By elliptic regularity the semigroup $S$ leaves the Banach space $C(\Gamma)$
of continuous functions on $\Gamma$ invariant.
Hence it is a natural question whether the restriction of $S$ 
to $C(\Gamma)$ is a $C_0$-semigroup.
As a special case of Theorem~\ref{tdtnc215}, we prove the following theorem.

\begin{thm} \label{tdtnc101}
Let $S$ be the semigroup generated by the Dirichlet-to-Neumann 
operator on an open bounded set with Lipschitz boundary~$\Gamma$.
Then $S$ leaves $C(\Gamma)$ invariant and the restriction 
of $S$ to $C(\Gamma)$ is a $C_0$-semigroup.
\end{thm}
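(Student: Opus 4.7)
The plan is to split the statement into two parts: (i)~invariance of $C(\Gamma)$ and boundedness of $S_t$ on $C(\Gamma)$, coming from a continuous integral kernel for $S_t$; and (ii)~strong continuity at $t = 0^+$, coming from the Markovian property of $S$ together with a uniform tightness estimate on that kernel.

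For part~(i) I would invoke the new criterion for semigroups to have a continuous kernel that is advertised in the abstract. The hypotheses I would check for the Dirichlet-to-Neumann semigroup are ultracontractivity $\|S_t\|_{L_2(\Gamma)\to L_\infty(\Gamma)}\le C t^{-\alpha}$, obtainable via spectral calculus for the self-adjoint operator $D_0$ from the Sobolev embedding $\dom(D_0^k)\subset H^k(\Gamma)\subset L_\infty(\Gamma)$ for $k>(d-1)/2$, together with De~Giorgi--Nash--Moser Hölder regularity up to the Lipschitz boundary for weak solutions of the associated divergence-form equation on~$\Omega$. These inputs produce a jointly continuous kernel $k_t\in C(\Gamma\times\Gamma)$ with $(S_t\varphi)(x)=\int_\Gamma k_t(x,y)\varphi(y)\,d\sigma(y)$. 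In particular $S_t L_\infty(\Gamma)\subset C(\Gamma)$, so $C(\Gamma)$ is $S_t$-invariant, and the submarkovian property yields the contraction bound $\|S_t\|_{C(\Gamma)\to C(\Gamma)}\le 1$.

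For part~(ii) I would use that constants are harmonic on $\Omega$, so $D_0\mathds{1}=0$, $S_t\mathds{1}=\mathds{1}$, and hence $\int_\Gamma k_t(x,y)\,d\sigma(y)=1$ for every $x$ and $t>0$. This gives
\begin{equation*}
(S_t\varphi-\varphi)(x)\;=\;\int_\Gamma k_t(x,y)\bigl(\varphi(y)-\varphi(x)\bigr)\,d\sigma(y),
\end{equation*}
and uniform continuity of $\varphi$ on the compact set $\Gamma$ reduces strong continuity of $S$ at $t=0^+$ in the sup-norm to the tightness statement
\begin{equation*}
\lim_{t\downarrow 0}\ \sup_{x\in\Gamma}\ \int_{\{y\in\Gamma:|y-x|\ge\delta\}}k_t(x,y)\,d\sigma(y)\;=\;0 \qquad(\delta>0).
\end{equation*}

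\textbf{Main obstacle.} The uniform tightness is the heart of the argument. My plan is to cover $\Gamma$ by finitely many balls $B(x_j,\delta/4)$ and pick, for each~$j$, a continuous cut-off $\psi_j\in C(\Gamma)$ that vanishes on $B(x_j,\delta/4)$ and equals~$1$ on $\Gamma\setminus B(x_j,\delta/2)$; then for $x\in B(x_j,\delta/4)$ the integrand is dominated by $\psi_j(y)$ and hence
\begin{equation*}
\int_{\{|y-x|\ge\delta\}}k_t(x,y)\,d\sigma(y)\;\le\;(S_t\psi_j)(x).
\end{equation*}
By part~(i) we know $S_t\psi_j\in C(\Gamma)$, and $(S_t\psi_j)(x)\to\psi_j(x)=0$ pointwise on $B(x_j,\delta/4)$ follows from strong continuity of $S$ on $L_2(\Gamma)$ combined with the continuity of the kernel; upgrading this to uniform convergence on $B(x_j,\delta/4)$ requires an equicontinuity bound on $\{S_t\psi_j:0<t\le 1\}$ with a modulus independent of~$t$. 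Producing this equicontinuity modulus --- despite the concentration of $k_t$ near the diagonal as $t\downarrow 0$, and in the absence of smooth parametrices due to the Lipschitz boundary --- is the principal technical obstacle; I would attack it via quantitative boundary Hölder estimates for the harmonic extension of $\psi_j$ on $\Omega$, uniform in~$t$. Once this uniform limit is established, finiteness of the cover yields the required uniform tightness, completing the proof of the $C_0$-property.
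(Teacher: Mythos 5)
Your part~(i) is essentially the paper's route: the invariance of $C(\Gamma)$ is obtained there from $S_tL_2(\Gamma)\subset C(\Gamma)$ (proved by iterating Nittka's elliptic $L_p$-regularity on the Lipschitz domain, not by a Sobolev embedding $\dom(D_0^k)\subset H^k(\Gamma)$, which is problematic on a merely Lipschitz boundary) together with the abstract kernel criterion of Theorem~\ref{tdtnc240}. The submarkovian bound $\|S_t\|_{C(\Gamma)\to C(\Gamma)}\le 1$ is also used in the paper.

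Part~(ii), however, has a genuine gap, and it sits exactly where you placed your ``main obstacle''. Your reduction of strong continuity to the uniform tightness of the kernels, and then to the statement that $(S_t\psi_j)(x)\to 0$ uniformly on $B(x_j,\delta/4)$, is circular: that uniform convergence is itself a (local) form of the $C_0$-property you are trying to prove, so nothing has been gained. The tool you propose for closing the gap does not apply: the harmonic extension of $\psi_j$ is a single $t$-independent function, and elliptic boundary H\"older estimates for it say nothing about the family $\{S_t\psi_j: 0<t\le 1\}$; what you would need is a parabolic equicontinuity estimate or a Poisson-type upper bound for $k_t$, and such bounds are only known under stronger boundary regularity ($C^{1+\kappa}$, as in the reference [EO2] cited in the introduction), not for Lipschitz $\Gamma$. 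The paper's introduction states explicitly that invariance plus submarkovianity do not suffice because $C(\Gamma)$ is not reflexive, and that the missing ingredient is the density of the domain of the part $D_{0,c}$ of the generator in $C(\Gamma)$. That density (Theorem~\ref{tdtnc201}) is the actual technical heart: one approximates $\Omega$ from outside by smooth domains, constructs solutions $u\in C^1(\overline\Omega)\cap H^2(\Omega)$ of the homogeneous equation whose traces approximate a given $\varphi\in C(\Gamma)$ uniformly, and then corrects the (merely bounded) conormal derivative by solving a Neumann problem so that it becomes continuous. Once the domain is dense, strong continuity is immediate from $\|(I-T_t)\varphi\|_\infty\le M\,t\,\|D_0\varphi\|_\infty$ for $\varphi$ in the domain --- no tightness of the kernel is needed. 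Your proposal contains no substitute for this density argument, so as it stands the proof of the $C_0$-property is incomplete.
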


If $\Omega$ has a $C^\infty$-boundary, then Theorem~\ref{tdtnc101} has been proved 
by Escher~\cite{Esc1} and Engel~\cite{Eng}.

Although $S$ leaves $C(\Gamma)$ invariant and $S$ is submarkovian, these two
facts do not imply that the restriction $T$ of $S$ to $C(\Gamma)$ is a $C_0$-semigroup,
since $C(\Gamma)$ is not reflexive.
One needs in addition that the generator of the restriction $T$ is 
densely defined.
This is the major problem that we solve in this paper.

Actually we prove several extensions of Theorem~\ref{tdtnc101}.
The first extension is that we replace the Laplacian by a divergence form 
operator $A$ with real symmetric Lipschitz continuous coefficients.
The second extension is that we add a potential $V \in L_\infty(\Omega,\Ri)$
to the divergence form operator and consider cases where the 
potential is negative (but still assuming the Dirichlet problem has a unique solution).
This means that given $\varphi \in L_2(\Gamma)$ we now 
solve the Dirichlet problem 
\[
\left[ \begin{array}{ll}
   (A + V) u= 0 & \mbox{weakly on } \Omega , \\[5pt]
   \Tr u = \varphi,  \\[5pt]
       \end{array} \right.
\]
and define the Dirichlet-to-Neumann operator $D_V$ by $D_V \varphi = \partial_\nu u$
on a suitable domain.
Using form methods one obtains that $-D_V$ generates a $C_0$-semigroup $S$ on $L_2(\Gamma)$
(see \cite{AEKS}).
The main point in this paper is to prove that the part of $D_V$ in $C(\Gamma)$
is densely defined in $C(\Gamma)$.
We prove this for all $V \in L_\infty(\Omega,\Ri)$, without any sign condition 
on $V$ (except assuming that the Dirichlet problem has a unique solution).
This is difficult even for the Laplacian since the normal is merely 
a measurable function on $\Gamma$.
For a rich class of potentials we then show that the restriction of $S$ to $C(\Gamma)$ 
is a $C_0$-semigroup on $C(\Gamma)$.

Attention is given to the special case where the semigroup~$S$
is positive.
Then we deduce that the Dirichlet-to-Neumann operator is resolvent positive 
on $C(\Gamma)$.

Another main point in this paper is the characterisation of those
semigroups in $L_2(K)$ which have a continuous kernel, where $K$ is a 
compact metric space.
This is done in an abstract framework.
Moreover, we find criteria for the irreducibility of the semigroup on $C(K)$.
Irreducibility is an important property which implies in particular
that the first eigenfunction is strictly positive.
We apply these results to the Dirichlet-to-Neumann operator 
but also to elliptic operators with Robin boundary conditions on~$\Omega$ if
$\Omega$ is connected.
So far, for Robin boundary conditions, 
strict positivity of the first eigenfunction in $C(\overline\Omega)$
was not known.
There is another reason to consider the Robin operator.
Even though $\Omega$ is connected, the boundary $\Gamma$ need not be be 
connected (an example is an annulus).
Still we are able to prove irreducibility for the Dirichlet-to-Neumann
semigroup on $C(\Gamma)$ and this is done with the Robin semigroup
on $C(\overline \Omega)$.
We should mention that irreducibility on $L_2$-spaces is much easier 
to obtain than on $C(K)$ (see \cite{Ouh5} Corollary~2.11 for elliptic 
operators and \cite{ArM2} Theorem~4.2 for the Dirichlet-to-Neumann operator).
The difference can be seen by the consequences for the first eigenfunction.
The irreducibility on $L_2$ merely implies that the first eigenfunction
is positive almost everywhere, whilst irreducibility on $C(K)$ implies
pointwise positive.
It is remarkable that our proof of this strict positivity (which is 
a purely elliptic property) involves considering the parabolic problem.

The paper is organised as follows.
In Section~\ref{Sdtnc5} we study in an abstract setting when a semigroup $S$ 
on $L_2(K)$ has a continuous kernel, where $K$ is a compact metric space.
If $S$ is positive and has a self-adjoint generator, then we characterise
when the restriction of $S$ to $C(K)$ is irreducible.
In Section~\ref{Sdtnc2} we consider the semigroup $S^V$ generated by $-D_V$,
where $D_V$ is the Dirichlet-to-Neumann operator with respect to a 
symmetric divergence form operator with coefficients $a_{kl} \in L_\infty(\Omega,\Ri)$
and potential $V \in L_\infty(\Omega,\Ri)$.
We show that $S^V$ has a continuous kernel and that the 
resolvent of $D_V$ leaves $C(\Gamma)$ invariant.
In Section~\ref{Sdtnc3} we prove that the domain of the part of $D_V$ 
in $C(\Gamma)$ is dense in $C(\Gamma)$ if the coefficients $a_{kl}$ are 
Lipschitz continuous. 
In Section~\ref{Sdtnc4} we prove an extension of Theorem~\ref{tdtnc101}
if $a_{kl} \in W^{1,\infty}(\Gamma)$ and the potential $V$ is positive
or slightly negative.
In Section~\ref{Sdtnc6} we study the Robin semigroup with boundary 
condition $\partial_\nu u + \beta \, \Tr u = 0$ without any 
sign condition on $\beta \in L_\infty(\Gamma,\Ri)$ and with 
coefficients of the divergence form operator in $L_\infty(\Omega,\Ri)$.
In the last section we show that $S^V$ is irreducible if merely $\Omega$ is 
connected and a positivity condition is satisfied.
Again the coefficients $a_{kl}$ are allowed to be measurable.

Using Poisson kernel bounds for the semigroup $S^V$, it is proved in 
\cite{EO7} that the semigroup $T$ is a holomorphic $C_0$-semigroup
on $C(\Gamma)$ if $\Omega$ has a $C^{1+\kappa}$-boundary for some $\kappa > 0$
and the coefficients $a_{kl}$ are merely H\"older continuous.
Thus more boundary smoothness of $\Omega$ is required in~\cite{EO7}.

\section{Continuous kernel and irreducibility} \label{Sdtnc5}

In this section we consider a semigroup $S$ on the space $L_2(K,\mu)$,
where $K$ is compact and $\mu$ is a finite Borel measure.
Our first aim is to investigate when $S$ has a continuous 
kernel.
Subsequently we asume that $S$ is positive (in the lattice sense)
and self-adjoint.
We will find criteria which imply that the first eigenfunction
is continuous and strictly positive.
In the sequel of this paper these two results will be applied to 
both the Dirichlet-to-Neumann operator and an elliptic operator with 
Robin boundary condition.

In general, by a {\bf semigroup} on a Banach space $X$ we understand 
simply a map $S \colon (0,\infty) \to \cl(X)$ satisfying 
$S_{t+s} = S_t \, S_s$ for all $t,s \in (0,\infty)$, 
without any further continuity assumption.
If $S$ is a semigroup on $L_2(K,\mu)$
we say that $S$ has a {\bf continuous kernel} if
for all $t > 0$ there exists a continuous function $k_t \colon K \times K \to \Ci$
such that for all $u \in L_2(K)$ the function $S_t u$ is given by
\[
(S_t u)(x)
= \int_K k_t(x,y) \, u(y) \, dy
\]
for almost every $x \in K$.
In many concrete situations regularity properties of kernels 
have been investigated,
but so far no characterisation for continuity of the kernel seems to be known.
The following theorem is such a charcaterisation 
in terms of a natural property, 
Condition~\ref{tdtnc240-3} in Theorem~\ref{tdtnc240},
which is frequently easy to verify.
Note that the semigroup does not have to be continuous in this theorem.

\begin{thm} \label{tdtnc240}
Let $K$ be a compact metric space and $\mu$ a finite Borel measure 
on~$K$.
Let $S$ be a semigroup on $L_2(K,\mu)$.
Then the following are equivalent.
\begin{tabeleq}
\item \label{tdtnc240-2}
The operator $S_t$ has a continuous kernel for all $t > 0$.
\item \label{tdtnc240-3}
There exists a $p_0 \in [2,\infty)$ such that 
$S_t L_{p_0}(K) \subset C(K)$ and $S_t^* L_{p_0}(K) \subset C(K)$ for all $t > 0$.
\item \label{tdtnc240-1}
$S_t L_2(K) \subset C(K)$ and $S_t^* L_2(K) \subset C(K)$ for all $t > 0$.
\end{tabeleq}
\end{thm}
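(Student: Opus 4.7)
The implications (i) $\Rightarrow$ (iii) and (iii) $\Rightarrow$ (ii) are the easy ones. A continuous kernel on the compact set $K \times K$ is uniformly bounded, so dominated convergence makes $x \mapsto \int k_t(x,y) u(y) \, d\mu(y)$ continuous for every $u \in L_2(K,\mu)$ (and likewise for $S_t^*$, whose kernel is the transpose of $k_t$); the second implication is immediate since $L_{p_0}(K,\mu) \subset L_2(K,\mu)$ holds whenever $\mu(K) < \infty$ and $p_0 \geq 2$.

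The serious content is (iii) $\Rightarrow$ (i), and my plan is to argue in three movements. First, because $C(K) \hookrightarrow L_2(K,\mu)$ is continuous, the graphs of $S_t, S_t^* \colon L_2 \to C(K)$ are closed and the closed graph theorem promotes them to bounded operators. For each $x \in K$, Riesz representation applied to the bounded functional $u \mapsto (S_t u)(x)$ on $L_2$ produces $h_t(x, \cdot) \in L_2$ with $(S_t u)(x) = \int h_t(x, y) u(y) \, d\mu(y)$ and $\|h_t(x, \cdot)\|_{L_2} \leq \|S_t\|_{L_2 \to C(K)}$. Since this bound is uniform in $x$ and $\mu(K) < \infty$, it integrates to give $h_t \in L_2(K \times K)$, so $S_t$ is Hilbert-Schmidt and therefore compact on $L_2$. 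Factoring $S_t = S_{t/2} \circ S_{t/2}$ as a compact map $L_2 \to L_2$ followed by a bounded map $L_2 \to C(K)$ then yields compactness of $S_t$ as a map $L_2 \to C(K)$, and similarly of $S_t^*$.

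Second, for each $u \in L_2$ the function $x \mapsto (S_t u)(x) = \int h_t(x,y) u(y)\, d\mu(y)$ lies in $C(K)$, so $x \mapsto h_t(x, \cdot)$ is weakly continuous into $L_2$ with uniformly bounded norm. Third, the semigroup identity $S_{2t} = S_t \circ S_t$ together with the adjoint relation on $L_2$ yields
\[
h_{2t}(x, \cdot) \;=\; S_t^* \bigl[ h_t(x, \cdot) \bigr] \quad \mbox{in } C(K) .
\]
Applying the compact operator $S_t^* \colon L_2 \to C(K)$ to the weakly continuous, bounded family $\{h_t(x, \cdot)\}_{x \in K}$ upgrades weak to norm convergence, so the map $x \mapsto h_{2t}(x, \cdot) \in C(K)$ is norm continuous: $\sup_y |h_{2t}(x, y) - h_{2t}(x_0, y)| \to 0$ as $x \to x_0$. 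Combined with continuity in $y$ for each fixed $x$, this gives joint continuity of $h_{2t}$ on $K \times K$. Since every $t > 0$ is of the form $2(t/2)$, (i) follows.

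For (ii) $\Rightarrow$ (iii) the same scheme runs in $L_{p_0}$: the closed graph theorem gives boundedness of $S_t, S_t^* \colon L_{p_0} \to C(K)$, Riesz-type representation puts the kernel sections in $L_{q_0}$ uniformly, and iterating the semigroup an appropriate number of times produces a kernel in $L_2(K \times K)$, after which the $L_2$-argument applies. The principal obstacle throughout is the upgrade from weak to norm continuity in the third movement: weak continuity of kernel sections is not enough to force joint continuity of the kernel, and the decisive input is the Hilbert-Schmidt-induced compactness of $S_t^* \colon L_2 \to C(K)$ extracted via the semigroup factorization.
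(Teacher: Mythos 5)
Your proof is correct, and your argument for the hard implication \ref{tdtnc240-1}$\Rightarrow$\ref{tdtnc240-2} takes a genuinely different route from the paper's. The paper first produces a bounded measurable kernel $\tilde k_t$ via the Dunford--Pettis theorem, identifies its sections almost everywhere with the Riesz kernel sections $k_x^t$ and $k_y^{*t}$, uses the semigroup property once to obtain a separately continuous (Carath\'eodory) version $\hat k_{2t}(x,y)=(k_y^{*t},k_x^t)_{L_2}$, and then applies the semigroup property a second time together with dominated convergence to get joint continuity at time $4t$. You instead show $S_t$ is Hilbert--Schmidt on $L_2$ from the uniform $L_2$-bound on the Riesz sections and the finiteness of $\mu$, factor $S_t=S_{t/2}S_{t/2}$ to obtain compactness of $S_t^*\colon L_2\to C(K)$, and invoke the standard principle that a compact operator with reflexive domain sends weakly convergent sequences to norm-convergent ones; applied to the weakly continuous, norm-bounded family $x\mapsto h_t(x,\cdot)$, this gives norm continuity of $x\mapsto h_{2t}(x,\cdot)$ in $C(K)$, hence joint continuity of the kernel already at time $2t$. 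This avoids the Dunford--Pettis detour and all the almost-everywhere identifications, at the cost of invoking compact operator theory. One small technical remark: the Hilbert--Schmidt property is cleanest established by summing $\sum_n\|S_te_n\|_{L_2}^2=\int_K\|h_t(x,\cdot)\|_{L_2}^2\,d\mu(x)$ over an orthonormal basis, since joint measurability of $h_t$ on $K\times K$ is not completely immediate (though Pettis' measurability theorem does supply it from the weak continuity of the sections and the separability of $L_2$). Your sketch of \ref{tdtnc240-3}$\Rightarrow$\ref{tdtnc240-1} is essentially the paper's duality-and-interpolation bootstrap and, while terse, can be completed exactly as there.
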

\begin{proof}
`\ref{tdtnc240-2}$\Rightarrow$\ref{tdtnc240-3}'. 
Trivial.

`\ref{tdtnc240-3}$\Rightarrow$\ref{tdtnc240-1}'. 
We may assume that $p_0 \in \Ni$.
Let $t > 0$.
Then $S^*_t$ is bounded from $L_{p_0}(K)$ into $L_\infty(K)$,
so by duality $S_t$ extends to a bounded operator from $L_1(K)$ into $L_{q_0}(K)$,
where $\frac{1}{q_0} = 1 - \frac{1}{p_0}$.
Also $S_t$ is bounded from $L_{p_0}(K)$ into $L_\infty(K)$.
So by interpolation, given $p \in [1,p_0]$, the operator
$S_t$ extends to a bounded operator from $L_p(K)$ into $L_q(K)$,
where $\frac{1}{p} - \frac{1}{q} = \frac{1}{p_0}$.
Starting with $p=1$ and using the semigroup property, iteration gives that 
for all $t > 0$ and $k \in \{ 1,\ldots,p_0 \} $ the operator $S_t$ extends to a 
bounded operator from $L_1(K)$ into $L_q(K)$,
where $\frac{1}{q} = 1 - \frac{k}{p_0}$.
Therefore Condition~\ref{tdtnc240-1} is valid.

`\ref{tdtnc240-1}$\Rightarrow$\ref{tdtnc240-2}'.
Let $t > 0$.
Then $S^*_t L_2(K) \subset C(K) \subset L_\infty(K)$, 
so by duality $S_t$ extends to a bounded operator from $L_1(K)$ into $L_2(K)$,
also denoted by $S_t$.
Then by the semigroup property $S_{2t} L_1(K) \subset L_\infty(K)$.
Hence by the Dunford--Pettis theorem, for all $t > 0$ there exists a bounded 
measurable function $\tilde k_t \colon K \times K \to \Ci$ such that 
\[
(S_t u, v)_{L_2(K)} 
= \int_{K \times K} (u \otimes \overline v) \, \tilde k_t
\]
for all $u,v \in L_2(K)$.
Hence if $u \in L_2(K)$, then 
\begin{equation}
(S_t u)(x) 
= \int_K \tilde k_t(x,y) \, u(y) \, dy
\label{etdtnc240;1}
\end{equation}
for almost every $x \in K$
and by duality
\[
(S_t^* u)(x) 
= \int_K \tilde k^*_t(x,y) \, u(y) \, dy
\]
for almost every $x \in K$, where
$\tilde k^*_t(x,y) = \overline{\tilde k_t(y,x)}$ for all $(x,y) \in K \times K$ and 
$t > 0$.
If $t > 0$, then the semigroup property gives that 
\begin{equation}
\tilde k_{2t}(x,y)
= \int_K \tilde k_t(x,z) \, \tilde k_t(z,y) \, dz
\label{etdtnc240;2}
\end{equation}
for almost every $(x,y) \in K \times K$.
In particular, for almost all $x \in K$ it follows that (\ref{etdtnc240;2}) 
is valid for almost every $y \in K$.

Fix $t > 0$.
Since $S_t L_2(K) \subset C(K)$ it follows from the 
Riesz representation theorem that for all 
$x \in K$ there exists a $k^t_x \in \cl_2(K)$ such that 
\[
(S_t u)(x) = (u, k_x^t)_{L_2(K)}
\]
for all $u \in L_2(K)$ and $\|k^t_x\|_2 \leq \|S_t\|_{2 \to \infty}$.
Similarly, for all 
$y \in K$ there exists a $k^{*t}_y \in \cl_2(K)$ such that 
\[
(S^*_t u)(y) = (u, k^{*t}_y)_{L_2(K)}
\]
for all $u \in L_2(K)$.
Then $\|k^{*t}_x\|_2 \leq \|S^*_t\|_{2 \to \infty}$.
Next we use (\ref{etdtnc240;1}).
Let $u \in L_2(K)$.
Then 
\begin{equation}
\int_K \tilde k_t(x,y) \, u(y) \, dy
= (S_t u)(x) 
= (u, k_x^t)_{L_2(K)}
\label{etdtnc240;5}
\end{equation}
for almost every $x \in K$.
Since $C(K)$ is separable and $C(K)$ is dense in $L_2(K)$,
also the space $L_2(K)$ is separable.
Then by continuity and density 
it follows that (\ref{etdtnc240;5}) is valid for all $u \in L_2(K)$
for almost every $x \in K$.
Therefore $\overline{k_x^t} = \tilde k_t(x, \,\cdot\,)$ almost everywhere
for almost every $x \in K$.
Similarly, $\overline{k^{*t}_y} = \tilde k^*_t(y, \,\cdot\,)$ almost everywhere
for almost every $y \in K$.
Hence $k^{*t}_y = \tilde k_t(\, \cdot\, ,y)$ almost everywhere
for almost every $y \in K$.

The semigroup property (\ref{etdtnc240;2}) and Fubini's theorem give
that for almost every $x \in K$ it follows that 
\[
\tilde k_{2t}(x,y)
= \int_K \tilde k_t(x,z) \, \tilde k_t(z,y) \, dz
\]
for almost every $y \in K$.
Hence for almost every $x \in K$ it follows that 
\[
\tilde k_{2t}(x,y)
= \int_K \overline{k_x^t}(z) \, k^{*t}_y(z) \, dz
= (k^{*t}_y, k_x^t)_{L_2(K)}
\]
for almost every $y \in K$.
Define $\hat k_{2t} \colon K \times K \to \Ci$
by 
\[
\hat k_{2t}(x,y) 
= (k^{*t}_y, k_x^t)_{L_2(K)}
.  \]
We proved that $\tilde k_{2t}(x, \,\cdot\,) = \hat k_{2t}(x,\,\cdot\,)$
almost everywhere for almost every $x \in K$.
Clearly $|\hat k_{2t}(x,y)| \leq \|S_t\|_{2 \to \infty} \, \|S^*_t\|_{2 \to \infty}$
for all $x,y \in K$.

Since $S_t u \in C(K)$ obviously 
$x \mapsto (S_t u)(x) = (u, k_x^t)_{L_2(K)}$ is continuous for all $u \in L_2(K)$.
Hence if $y \in K$, then the function $x \mapsto \hat k_{2t}(x,y)$ is continuous
from $K$ into $\Ci$.
Similarly, for all $x \in K$ the function 
$y \mapsto \hat k_{2t}(x,y)$ is continuous
from $K$ into $\Ci$.
In particular, $\hat k_{2t}$ is a Carath\'eodory function and therefore
measurable (see \cite{AliprantisBorder} Lemma~4.51).
Because $\tilde k_{2t}(x, \,\cdot\,) = \hat k_{2t}(x,\,\cdot\,)$
almost everywhere for almost every $x \in K$,
one deduces from Fubini's theorem that 
$\tilde k_{2t} = \hat k_{2t}$ almost everywhere.

Define $k_{4t} \colon K \times K \to \Ci$ by
\[
k_{4t}(x,y) = \int_K \hat k_{2t}(x,z) \, \hat k_{2t}(z,y) \, dz
.  \]
Then the semigroup poperty~(\ref{etdtnc240;2}) gives
\[
\tilde k_{4t}(x,y)
= \int_K \tilde k_{2t}(x,z) \, \tilde k_{2t}(z,y) \, dz
= \int_K \hat k_{2t}(x,z) \, \hat k_{2t}(z,y) \, dz
= k_{4t}(x,y)
\]
for almost every $(x,y) \in K \times K$.
So $\tilde k_{4t} = k_{4t}$ almost everywhere.

Finally, for all $z \in K$ the function 
$(x,y) \mapsto \hat k_{2t}(x,z) \, \hat k_{2t}(z,y)$ is 
continuous from $K \times K$ into $\Ci$
and bounded 
by $\|S_t\|_{2 \to \infty}^2 \, \|S^*_t\|_{2 \to \infty}^2$.
Moreover, the measure is finite.
Hence by the Lebesgue dominated convergence theorem 
one deduces that $k_{4t}$ is continuous.
Therefore $\tilde k_{4t}$ has a continuous representative.
\end{proof}

\begin{remark} \label{rdtnc240.5}
Theorem~\ref{tdtnc240} is also valid if $K$ is replaced by a locally compact 
metric space~$X$ and $C(K)$ is replaced by $C_{\rm b}(X)$.
We do not know whether the condition that $\mu$ is a finite Borel
measure can be relaxed to $\mu$ being a regular measure.
\end{remark}

In the situation of Theorem~\ref{tdtnc240} it follows immediately that $S_t$ leaves 
$C(K)$ invariant for all $t > 0$.
Since kernel operators are compact, it follows that $(S_t|_{C(K)})_{t > 0}$ 
is a semigroup of compact operators in $C(K)$.
It is not clear, however, whether it is a $C_0$-semigroup, even if 
$S$ is a $C_0$-semigroup on $L_2(K)$.

\medskip

A subspace $I$ of a (general) Banach lattice $E$ is called an {\bf ideal}
if
\[
\left[ \begin{array}{l}
u \in I \mbox{ implies } |u| \in I \mbox{ and}  \\[5pt]
u \in I, \; v \in E \mbox{ and } 0 \leq v \leq u \mbox{ implies } v \in I.
       \end{array} \right.
\]
A semigroup on $E$ is called {\bf irreducible} if the only 
invariant closed ideals are $ \{ 0 \} $ and~$E$.
If $(X,\Sigma,\mu)$ is a measure space, $p \in [1,\infty)$ and 
$I \subset L_p(X)$, then $I$ is a closed ideal if and only if 
there exists a measurable subset $Y \subset X$ such that 
$I = \{ f \in L_p(X) : f|_Y = 0 \mbox{ a.e.} \} $
(see \cite{Schae2} Section~III.1 Example~1).
A subspace $I$ of $C(K)$ is a closed ideal of $C(K)$
if and only if there exists a closed set $B \subset K$ 
such that $I = \{ f \in C(K) : f|_B = 0 \} $ 
(see \cite{Schae2} Section~III.1 Example~2).
We refer to \cite{Nag} for much more information on irreducible semigroups.
An operator $B \colon E \to E$ is called {\bf positive} if $B f \geq 0$ for all 
$f \in E$ with $f \geq 0$.
A semigroup $S$ on $E$ is called {\bf positive} if $S_t$ is positive
for all $t > 0$.

In this paper we need a number of known properties of positive and irreducible 
semigroups when $E = L_2(K)$, where $K$ is a compact metric space.
For convenience and future reference we collect them in the next lemma.

\begin{lemma} \label{ldtnc528}
Let $S$ be a $C_0$-semigroup on $L_2(K,\mu)$, where $K$ is a 
compact metric space and $\mu$ is a finite Borel measure on $K$.
Suppose the generator $-A$ of $S$ is self-adjoint and that $S_t$ 
has a bounded kernel for all $t > 0$. 
Then one has the following.
\begin{tabel} 
\item \label{ldtnc528-1}
For all $t > 0$ the operator $S_t$ is a Hilbert--Schmidt operator.
\item \label{ldtnc528-2}
The operator $A$ has compact resolvent and 
$\min \sigma(A)$ is an eigenvalue.
\item \label{ldtnc528-3}
If $S$ is positive, then there exists an eigenfunction $u_1$ 
with eigenvalue $\min \sigma(A)$ such that $u_1 \geq 0$ almost everywhere.
\item \label{ldtnc528-4}
If $S$ is positive and irreducible, then the eigenvalue $\min \sigma(A)$ is simple.
Moreover, there exists an eigenfunction $u_1$ 
with eigenvalue $\min \sigma(A)$ such that $u_1(x) > 0$ for almost every 
$x \in K$.
\end{tabel}
\end{lemma}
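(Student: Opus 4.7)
For (a), since $k_t$ is bounded and $\mu$ is finite, $k_t \in L_2(K \times K, \mu \otimes \mu)$, and the standard identification of $L_2$-kernel integral operators with Hilbert--Schmidt operators gives $\|S_t\|_{\HS}^2 = \|k_t\|_{L_2(K \times K)}^2 < \infty$. For (b), Hilbert--Schmidt implies compact; self-adjointness of $A$ forces $\sigma(A) \subset \Ri$, while $-A$ generating a $C_0$-semigroup forces $\sigma(A) \subset [-\omega,\infty)$ for some $\omega \geq 0$. Self-adjointness also makes $S$ holomorphic on $(0,\infty)$ and hence norm-continuous there, so I would represent the resolvent as $R(\lambda,-A) = \int_0^\infty e^{-\lambda t} S_t\, dt$ (for $\RRe \lambda > \omega$), a norm-convergent integral of compact operators and therefore compact. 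The spectrum of $A$ is then discrete and bounded below, so $\min \sigma(A)$ is attained as an eigenvalue.

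For (c), I would invoke the Beurling--Deny criterion in Ouhabaz's form: positivity of $S$ is equivalent to the closed quadratic form $\gota$ of $A$ satisfying $u \in \dom(\gota) \Rightarrow |u| \in \dom(\gota)$ and $\gota(|u|) \leq \gota(u)$. Combined with the variational characterisation $\min \sigma(A) = \inf\{\gota(u) : \|u\|=1\}$, any real eigenfunction $u$ for $\lambda_1 := \min \sigma(A)$ gives $\gota(|u|) \leq \gota(u) = \lambda_1$, so $|u|$ is also a minimiser of the Rayleigh quotient, and hence an eigenfunction for $\lambda_1$; take $u_1 := |u|$.

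For (d), I treat strict positivity and simplicity in turn. For strict positivity, take $u_1 \geq 0$ from (c) and define the closed ideal $I = \{f \in L_2(K) : f u_1 = 0 \text{ a.e.}\}$. The heart of the matter, and the main obstacle in the whole argument, is the $S_t$-invariance of $I$: for $f \in I$ with $f \geq 0$, self-adjointness of $A$ and $S_t u_1 = e^{-t\lambda_1} u_1$ give $(S_t f, u_1) = e^{-t\lambda_1}(f, u_1) = 0$, and non-negativity of $S_t f$ and of $u_1$ then forces $S_t f \cdot u_1 = 0$ a.e.; for general $f \in I$ I would reduce to the positive case via $f^\pm \leq |f| \in I$. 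Irreducibility then yields $I = \{0\}$ or $I = L_2(K)$, and $u_1 \neq 0$ rules out the latter, so $u_1 > 0$ a.e. For simplicity, Beurling--Deny gives that for any real eigenfunction $u$ for $\lambda_1$ both $|u|$ and $u^\pm = \tfrac{1}{2}(|u| \pm u)$ are eigenfunctions (or zero); applying the strict-positivity conclusion to the nonzero ones, and using the disjoint supports of $u^+$ and $u^-$, forces $u$ to have constant sign a.e. Then for any two real eigenfunctions $u,v$ for $\lambda_1$ (both WLOG strictly positive a.e.) I would choose $\lambda > 0$ with $(u - \lambda v, u_1) = 0$; the real eigenfunction $u - \lambda v$ has constant sign and is orthogonal to the strictly positive $u_1$, hence vanishes, giving linear dependence.
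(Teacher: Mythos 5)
Your proof is correct, but it takes a genuinely different route from the paper. The paper disposes of (a) and (b) as ``easy'' and then simply cites the literature: (c) is the Krein--Rutman theorem and (d) is quoted from the B\'atkai--Kramar Fijav\v z--Rhandi monograph on positive operator semigroups. You instead give a self-contained argument in the form-theoretic/lattice-theoretic spirit: (c) via the first Beurling--Deny criterion plus the variational characterisation of $\min\sigma(A)$ (so that $|u|$ is again a minimiser of the Rayleigh quotient, hence an eigenfunction), and (d) via the invariance of the closed ideal $\{f: f u_1=0 \mbox{ a.e.}\}$, which you verify by the neat orthogonality computation $(S_t f,u_1)=e^{-\lambda_1 t}(f,u_1)=0$ combined with positivity; simplicity then follows from the constant-sign property of real eigenfunctions and an orthogonalisation trick. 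What your approach buys is transparency and independence from the cited references, at the price of leaning on the Beurling--Deny equivalence and on two small points you leave implicit: that positivity of $S$ makes $A$ a real operator, so the eigenspace for $\min\sigma(A)$ is spanned by real eigenfunctions (needed both to produce the real $u$ in (c) and to conclude simplicity of the complex eigenspace in (d)); and, in (b), that the Laplace-transform integral converges in operator norm, which is where the analyticity (hence immediate norm continuity) of the self-adjoint semigroup enters. Both points are standard and easily supplied, so I see no gap.
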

\begin{proof}
`\ref{ldtnc528-1}' and `\ref{ldtnc528-2}' are easy.

`\ref{ldtnc528-3}'. 
This follows from the Krein--Rutman theorem, see for example
\cite{BatkaiKR} Theorem~12.15.

`\ref{ldtnc528-4}'. 
See \cite{BatkaiKR} Proposition~14.12(c) and Example~14.11(a).
\end{proof}

We emphasise that the eigenfunction $u_1$ 
in Statement~\ref{ldtnc528-3} is in general 
not unique, even not up to a positive constant.
If moreover $S_t L_2(K) \subset C(K)$ for all $t > 0$, 
then $u_1$ is continuous.
If $S$ is irreducible (on $L_2(K)$), then $u_1(x) > 0$ for almost all 
$x \in K$ by Lemma~\ref{ldtnc528}\ref{ldtnc528-3}.
Of course this does not imply that $u_1(x) > 0$ for all $x \in K$.
We will relate this strict positivity with the 
irreducibility of the semigroup on $C(K)$.
The main point of the following proposition is that the very 
weak nondegeneracy condition~\ref{pdtnc522-2} implies that the 
first eigenfunction is strictly positive.

\begin{prop} \label{pdtnc522}
Let $K$ be a compact connected metric space and $\mu$ a finite Borel measure 
on~$K$ with $\supp \mu = K$.
Let $S$ be a positive $C_0$-semigroup on $L_2(K,\mu)$
with self-adjoint generator $-A$.
Suppose that $S_t L_2(K) \subset C(K)$ for all $t > 0$.
Define 
\[
S^c_t = S_t|_{C(K)} \colon C(K) \to C(K)
\]
for all $t > 0$.
Then the following are equivalent.
\begin{tabeleq}
\item \label{pdtnc522-1}
The semigroup $S^c = (S^c_t)_{t > 0}$ is irreducible.
\item \label{pdtnc522-2}
For all $x \in K$ there exist $t > 0$ and $f \in C(K)$ such that 
$(S^c_t f)(x) \neq 0$.
\item \label{pdtnc522-3}
There exits a $\delta > 0$ such that $u_1(x) \geq \delta$
for all $x \in K$, where 
$u_1 \in L_2(K)$ is an eigenfunction with eigenvalue $\min \sigma(A)$
such that $u_1 \geq 0$ almost everywhere.
\item \label{pdtnc522-4}
For all $f \in C(K)$ with $f \geq 0$ and $f \neq 0$ it follows that 
$(S_t f)(x) > 0$ for all $t > 0$ and $x \in K$.
\end{tabeleq}
\end{prop}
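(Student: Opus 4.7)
The plan is to prove the equivalences as the cycle (\ref{pdtnc522-1})$\Rightarrow$(\ref{pdtnc522-2})$\Rightarrow$(\ref{pdtnc522-3})$\Rightarrow$(\ref{pdtnc522-4})$\Rightarrow$(\ref{pdtnc522-1}). The two extremal implications are short. For (\ref{pdtnc522-1})$\Rightarrow$(\ref{pdtnc522-2}), if (\ref{pdtnc522-2}) fails at some $x_0 \in K$, then the proper closed ideal $J := \{g \in C(K) : g(x_0) = 0\}$ absorbs the entire image of each $S^c_t$ and is therefore invariant, contradicting irreducibility. For (\ref{pdtnc522-4})$\Rightarrow$(\ref{pdtnc522-1}), any nontrivial closed invariant ideal has the form $\{f : f|_B = 0\}$ for some nonempty closed $B \subsetneq K$; picking $0 \neq f \geq 0$ in the ideal, condition (\ref{pdtnc522-4}) makes $S_t f$ strictly positive on $B$, contradicting $S_t f|_B = 0$.

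For (\ref{pdtnc522-2})$\Rightarrow$(\ref{pdtnc522-3}), I would take a nonnegative nonzero first eigenfunction $u_1$ provided by Lemma~\ref{ldtnc528}\ref{ldtnc528-3}; the identity $u_1 = e^{\lambda_1 t} S_t u_1$ (with $\lambda_1 := \min \sigma(A)$) places $u_1 \in C(K)$. Assume for contradiction that $Z := \{u_1 = 0\}$ is nonempty and let $P := \{u_1 > 0\}$. For $x \in Z$ the relation $\int k_t(x, y) u_1(y) \, d\mu(y) = 0$, continuity of both factors, and $\supp \mu = K$ force the pointwise identity $k_t(x, y) u_1(y) = 0$ for every $y \in K$; hence $k_t$ vanishes on $Z \times P$, and by symmetry also on $P \times Z$. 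Applying (\ref{pdtnc522-2}) at an arbitrary $x_0 \in Z$ produces $t > 0$ and a nonempty open $V$ on which $k_t(x_0, \cdot) > 0$; the vanishing of $k_t$ on $Z \times P$ forces $V \subset Z$. Fixing $y_1 \in V$ and using joint continuity of $k_t$, there is an open $U \ni x_0$ with $k_t(x, y_1) > 0$ for all $x \in U$; no such $x$ can lie in $P$ (else $k_t(x, y_1) = 0$), so $U \subset Z$. Thus $Z$ is open as well as closed, and connectedness of $K$ together with $u_1 \neq 0$ gives $Z = \emptyset$; compactness then yields $\delta := \min u_1 > 0$.

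For (\ref{pdtnc522-3})$\Rightarrow$(\ref{pdtnc522-4}), fix $f \in C(K)$ with $f \geq 0, f \neq 0$ and $x_0 \in K$, and set $\phi(t) := (S_t f)(x_0)$. Spectral theory for the self-adjoint $A$ gives $e^{\lambda_1 t} S_t f \to P_1 f$ in $L_2(K)$, where $P_1$ is the orthogonal projection onto the first eigenspace; composing with $S_1 \colon L_2(K) \to C(K)$ (bounded by the closed graph theorem) upgrades convergence to $C(K)$, via $e^{\lambda_1(t+1)} S_{t+1} f = e^{\lambda_1} S_1(e^{\lambda_1 t} S_t f) \to e^{\lambda_1} S_1 P_1 f = P_1 f$. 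The limit $P_1 f$ lies in $C(K)$, is a nonnegative first eigenfunction, and $(P_1 f, u_1)_{L_2} = (f, u_1)_{L_2} \geq \delta \int f \, d\mu > 0$, hence is nonzero. Since (\ref{pdtnc522-3}) trivially implies (\ref{pdtnc522-2}), the argument of the previous paragraph applied to $P_1 f$ yields $P_1 f \geq \eta > 0$, and uniform convergence produces $T > 0$ such that $\phi(t) \geq \tfrac{\eta}{2} e^{-\lambda_1 t} > 0$ for every $t \geq T$.

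The main obstacle is to propagate this eventual positivity to all $t > 0$, which I plan to do by combining monotonicity with analyticity. Condition (\ref{pdtnc522-3}) yields the diagonal estimate $k_s(x_0, x_0) = \int k_{s/2}(x_0, z)^2 \, d\mu(z) > 0$ for every $s > 0$ and $x_0 \in K$, because $\int k_{s/2}(x_0, y) u_1(y) \, d\mu(y) = e^{-\lambda_1 s/2} u_1(x_0) > 0$ rules out $k_{s/2}(x_0, \cdot) \equiv 0$; by continuity, $k_s(x_0, \cdot) > 0$ on an open neighbourhood of $x_0$. Splitting $\phi(t) = \int k_{t-t_0}(x_0, y) (S_{t_0} f)(y) \, d\mu(y)$ and intersecting this neighbourhood with the neighbourhood where $S_{t_0} f > 0$ (present as soon as $\phi(t_0) > 0$) gives the monotonicity $\phi(t_0) > 0 \Rightarrow \phi(t) > 0$ for every $t > t_0$, so $E := \{t > 0 : \phi(t) > 0\}$ is upward-closed. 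On the other hand, writing $\phi(t + s) = (k_t(x_0, \cdot), S_s f)_{L_2}$ for $t, s > 0$ and invoking analyticity of the orbit $s \mapsto S_s f$ from $(0, \infty)$ into $L_2(K)$ (a consequence of self-adjointness of the generator) shows $\phi$ is real-analytic on $(0, \infty)$. Since $E$ is open, upward-closed and nonempty, $E = (t^*, \infty)$ for some $t^* \geq 0$; if $t^* > 0$ then $\phi \equiv 0$ on $(0, t^*)$, whereupon analyticity on the connected set $(0, \infty)$ forces $\phi \equiv 0$, contradicting $\phi(t) > 0$ for $t \geq T$. Therefore $t^* = 0$, and (\ref{pdtnc522-4}) holds.
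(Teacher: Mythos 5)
Your proof is correct, but it runs the equivalence cycle in the opposite direction from the paper and replaces both of the paper's nontrivial implications with different arguments. The paper proves \ref{pdtnc522-1}$\Rightarrow$\ref{pdtnc522-4} by citing a Majewski--Robinson type result together with Theorem~C-III.3.2(b) of Nagel's book (using that $S^c$ extends holomorphically with values in $\cl(C(K))$), and proves \ref{pdtnc522-2}$\Rightarrow$\ref{pdtnc522-1} by taking an invariant ideal $\{f : f|_B = 0\}$, choosing $x_0 \in \partial B$ (which exists by connectedness), deducing $k_t(x_0,x_0) = 0$ from continuity of the kernel, and then $k_{t/2}(x_0,\cdot) \equiv 0$ from Chapman--Kolmogorov and symmetry. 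You instead make \ref{pdtnc522-2}$\Rightarrow$\ref{pdtnc522-3} the place where connectedness enters, via a clopenness argument for the zero set of $u_1$, and you prove \ref{pdtnc522-3}$\Rightarrow$\ref{pdtnc522-4} from scratch by combining the long-time asymptotics $e^{\lambda_1 t} S_t f \to P_1 f$ (upgraded to uniform convergence by factoring through $S_1 \colon L_2(K) \to C(K)$), strict positivity of $P_1 f$, upward-closedness of $\{t : \phi(t) > 0\}$, and real-analyticity of $\phi$. Your route is more self-contained where the paper relies on external citations, at the cost of length; both exploit the same two ingredients (the continuous kernel from Theorem~\ref{tdtnc240} and connectedness of $K$), just at different joints of the cycle. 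Two points you should make explicit: first, the pointwise nonnegativity and symmetry of the continuous kernel $k_t$ (these follow from the almost-everywhere statements by continuity and $\supp \mu = K$, and you use them when deducing $k_t(x,y)\,u_1(y) = 0$ for \emph{every} $y$ and when transferring the vanishing from $Z \times P$ to $P \times Z$); second, that the Chapman--Kolmogorov identity holds pointwise for the continuous representatives (again by continuity and full support), which you need for $k_s(x_0,x_0) = \int_K k_{s/2}(x_0,z)^2 \, d\mu(z)$.
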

\begin{proof}
`\ref{pdtnc522-1}$\Rightarrow$\ref{pdtnc522-4}'.
This is a variation of a theorem of Majewski and Robinson \cite{MajewskiRobinson}.
Let $x \in K$.
It follows from irreducibility that there exists a $t_1 > 0$ such that 
$(S^c_{t_1} f)(x) > 0$
(see \cite{Nag} Section~C-III Definition~3.1).
Let $\delta \in (0,t_1)$.
We shall show that $(S^c_t f)(x) = 0$ for all $t \in (\delta,\infty)$.
Set $t_0 = t_1 - \delta$ and $g = S^c_\delta f$.
Then $(S^c_{t_0} g)(x) > 0$.
Since $S^c$ has a holomorphic extension to a sector with 
values in $\cl(C(K))$, it follows from the proof
of Theorem~C-III.3.2(b) in \cite{Nag} that $(S^c_t g)(x) > 0$
for all $t > 0$.

`\ref{pdtnc522-4}$\Rightarrow$\ref{pdtnc522-3}'.
This is trivial.

`\ref{pdtnc522-3}$\Rightarrow$\ref{pdtnc522-2}'.
Take $f = u_1$.

`\ref{pdtnc522-2}$\Rightarrow$\ref{pdtnc522-1}'.
By Theorem~\ref{tdtnc240} the operator $S_t$ has a 
continuous kernel $k_t$ for all $t > 0$.
Let $B \subset K$ be a closed set with $\emptyset \neq B \neq K$.
Define 
\[
I = \{ f \in C(K) : f|_B = 0 \} 
.  \]
Suppose that the closed ideal $I$ is invariant under~$S$.
Define $g \in C(K)$ by $g(x) = d(x,B)$.
Then $g \in I$.
Since $K$ is connected there exists an $x_0 \in \partial B$.
Let $t > 0$.
Because $S_t g \in I$, one deduces that 
\[
\int_K k_t(x_0,y) \, d(y,B) \, d\mu(y)
= (S_t g)(x_0)
= 0
.  \]
Hence $k_t(x_0,y) = 0$ for a.e.\ $y \in K \setminus B$.
Since $k_t$ is continuous and $\mu$ is strictly positive on open sets
it follows that $k_t(x_0,y) = 0$ for all $y \in K \setminus B$.
Because $x_0 \in \partial B$ one establishes that $k_t(x_0,x_0) = 0$.
The semigroup property and symmetry then imply that 
\[
0 
= k_t(x_0,x_0)
= \int_K k_{t/2}(x_0,y) \, k_{t/2}(y,x_0) \, d\mu(y)
= \int_K |k_{t/2}(x_0,y)|^2 \, d\mu(y)
.  \]
Hence $k_{t/2}(x_0,y) = 0$ for almost every $y \in K$.
Consequently $(S_{t/2} f)(x_0) = 0$ for all $f \in C(K)$.
This is for all $t > 0$, which is a contradiction.
\end{proof}

Condition~\ref{pdtnc522-2} is automatically satisfied  if the semigroup
$S^c$ is a $C_0$-semigroup, because then 
$\lim_{t \downarrow 0} S^c_t \one = \one$ in $C(K)$.
As a consequence the semigroup is irreducible and $u_1(x) > 0$ for all 
$x \in K$.
This is surprising, since only the connectedness of $K$ is 
responsible for this property.
We state this as a corollary.

\begin{cor} \label{cdtnc522.5}
Let $K$ be a compact connected metric space and $\mu$ a finite Borel measure 
on~$K$ with $\supp \mu = K$.
Let $S$ be a positive $C_0$-semigroup on $L_2(K,\mu)$ with self-adjoint generator.
Suppose that $S_t L_2(K) \subset C(K)$ for all $t > 0$.
Define 
\[
S^c_t = S_t|_{C(K)} \colon C(K) \to C(K)
\]
for all $t > 0$.
If $S^c$ is a $C_0$-semigroup, then it is irreducible and 
$\min_{x \in K} u_1(x) > 0$.
\end{cor}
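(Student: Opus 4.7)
The plan is to verify hypothesis \ref{pdtnc522-2} of Proposition~\ref{pdtnc522} and invoke the proposition directly. All other assumptions of Proposition~\ref{pdtnc522} are transferred verbatim from the hypotheses of the corollary, so the only nontrivial task is to produce, for each $x \in K$, some $t > 0$ and some $f \in C(K)$ with $(S^c_t f)(x) \neq 0$.

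The key observation is that since $K$ is compact, the constant function $\one$ belongs to $C(K)$. Because $S^c$ is a $C_0$-semigroup on $C(K)$, one has $\lim_{t \downarrow 0} S^c_t \one = \one$ in the supremum norm. Hence there exists $t_0 > 0$ such that $\|S^c_{t_0} \one - \one\|_\infty < \tfrac{1}{2}$, and therefore
\[
(S^c_{t_0} \one)(x) > \tfrac{1}{2} > 0 \quad \text{for every } x \in K.
\]
Taking $f = \one$ and $t = t_0$ in condition~\ref{pdtnc522-2} verifies it, in fact uniformly in $x$.

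Proposition~\ref{pdtnc522} now yields the equivalence of \ref{pdtnc522-1} and \ref{pdtnc522-3}, so $S^c$ is irreducible and there exists $\delta > 0$ such that $u_1(x) \geq \delta$ for all $x \in K$. Since $u_1 \in L_2(K)$ is an eigenfunction of every $S_t$ (with eigenvalue $e^{-t \min \sigma(A)}$) and $S_t L_2(K) \subset C(K)$, the function $u_1$ can (and must) be identified with its continuous representative, for which the pointwise inequality above makes sense. Therefore $\min_{x \in K} u_1(x) \geq \delta > 0$.

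There is no real obstacle here: the entire content of the corollary is already packaged in Proposition~\ref{pdtnc522}, and the role of the $C_0$-property of $S^c$ is solely to guarantee that $S^c_t \one \to \one$ uniformly, which provides the nondegeneracy point-by-point. It is precisely this observation that makes the connectedness of $K$ the only geometric input needed for strict positivity of the ground state.
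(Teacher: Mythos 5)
Your proof is correct and follows exactly the paper's own argument: the corollary is deduced from Proposition~\ref{pdtnc522} by noting that the $C_0$-property of $S^c$ forces $S^c_t \one \to \one$ uniformly, which verifies condition~\ref{pdtnc522-2}. The extra remark about identifying $u_1$ with its continuous representative is a harmless clarification.
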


There is a remarkable consequence of irreducibility: the semigroup
$S$ extends to a consistent $C_0$-semigroup on $L_p(K)$ for all $p \in [1,\infty)$.

\begin{prop} \label{pdtnc524}
Let $K$ be a compact connected metric space and $\mu$ a finite Borel measure 
on~$K$ with $\supp \mu = K$.
Let $S$ be a positive $C_0$-semigroup on $L_2(K,\mu)$ with self-adjoint generator~$-A$.
Suppose that $S_t L_2(K) \subset C(K)$ for all $t > 0$ and that 
$S^c$ is irreducible.
Then for all $p \in [1,\infty)$ there exists a $C_0$-semigroup $S^{(p)}$ on $L_p(K)$
which is consistent to $S$.
Moreover, there exists an $M \geq 1$ such that 
$\|S^{(p)}_t\|_{p \to p} \leq M \, e^{-\lambda_1 t}$
for all $t > 0$ and $p \in [1,\infty)$, where $\lambda_1 = \min \sigma(A)$.
\end{prop}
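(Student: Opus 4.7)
The plan is to exploit the strict positivity of the ground state, which Proposition~\ref{pdtnc522} provides the moment we know $S^c$ is irreducible. Concretely, let $u_1 \in L_2(K,\mu)$ be the eigenfunction with eigenvalue $\lambda_1 = \min\sigma(A)$ furnished by Lemma~\ref{ldtnc528}\ref{ldtnc528-3}. Since $S_t L_2(K) \subset C(K)$ and $u_1 = e^{\lambda_1 t} S_t u_1$, the function $u_1$ is continuous, and by Proposition~\ref{pdtnc522}\ref{pdtnc522-3} there is a $\delta > 0$ with $u_1(x) \geq \delta$ for every $x \in K$. Set $M = \|u_1\|_\infty/\delta$.

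The key estimate is an $L_\infty$ bound obtained by pointwise domination. For $f \in L_\infty(K,\mu) \subset L_2(K,\mu)$ one has $|f| \leq \|f\|_\infty \, \delta^{-1} u_1$ almost everywhere, hence by positivity of $S_t$ and $S_t u_1 = e^{-\lambda_1 t} u_1$ we obtain, as a continuous function,
\[
 |S_t f| \;\leq\; S_t |f| \;\leq\; \|f\|_\infty \, \delta^{-1} e^{-\lambda_1 t} u_1 \;\leq\; M e^{-\lambda_1 t} \|f\|_\infty .
\]
Thus $S_t$ extends to a bounded operator on $L_\infty(K,\mu)$ of norm at most $M e^{-\lambda_1 t}$. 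Since $-A$ is self-adjoint, duality against $L_1(K,\mu)$ (using that $S_t$ is self-adjoint on $L_2$ and $L_1 \cap L_2$ is dense in $L_1$) yields the same bound $\|S_t\|_{1\to 1} \leq M e^{-\lambda_1 t}$. Riesz--Thorin interpolation then gives, for every $p \in [1,\infty]$, a bounded operator $S^{(p)}_t$ on $L_p(K,\mu)$ with $\|S^{(p)}_t\|_{p \to p} \leq M e^{-\lambda_1 t}$. The semigroup law passes to each $S^{(p)}$, and consistency with $S$ is automatic because all extensions agree on $C(K)$, which sits densely in every $L_p(K,\mu)$ with $p < \infty$.

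It remains to verify strong continuity of $S^{(p)}$ on $L_p(K,\mu)$ for $p \in [1,\infty)$. Fix $f \in C(K)$; since $C(K) \subset L_2(K,\mu)$ and $S$ is a $C_0$-semigroup on $L_2$, we have $S^{(p)}_t f = S_t f \to f$ in $L_2$ as $t \downarrow 0$. For $p \in [1,2]$ the finite-measure embedding $L_2 \hookrightarrow L_p$ immediately gives convergence in $L_p$. For $p \in (2,\infty)$, combine the $L_2$-convergence with the uniform bound $\|S_t f - f\|_\infty \leq (M e^{|\lambda_1|} + 1)\|f\|_\infty$ (valid for $t \in [0,1]$) through the inequality
\[
 \|S_t f - f\|_{L_p}^p \;\leq\; \|S_t f - f\|_\infty^{\,p-2}\,\|S_t f - f\|_{L_2}^2 ,
\]
which yields $\|S^{(p)}_t f - f\|_{L_p} \to 0$. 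Since $C(K)$ is dense in $L_p(K,\mu)$ and the family $(S^{(p)}_t)_{t \in (0,1]}$ is uniformly bounded, strong continuity on all of $L_p(K,\mu)$ follows by a standard $3\varepsilon$ argument.

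The main obstacle is the $C_0$-property on $L_p$ for large $p$: positivity alone does not transport strong continuity from $L_2$ to $L_p$ when $p > 2$, and this is why we need the pointwise ground-state domination to supply the uniform $L_\infty$-bound that drives the interpolation argument. Everything else — the norm estimates and the consistency of the extensions — flows cleanly from the strict positivity of $u_1$, which itself is the nontrivial input of Proposition~\ref{pdtnc522}.
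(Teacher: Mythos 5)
Your proof is correct and follows essentially the same route as the paper: pointwise domination by the strictly positive ground state $u_1$ gives the $L_\infty$ bound $\|S_t f\|_\infty \leq M e^{-\lambda_1 t}\|f\|_\infty$, self-adjointness gives the $L_1$ bound by duality, and interpolation gives the $L_p$ bounds. The only difference is in the last step, where the paper simply invokes Voigt's result on semigroups acting on different $L_p$-spaces to get strong continuity, whereas you supply a direct (and valid) elementary argument via $\|g\|_{L_p}^p \leq \|g\|_\infty^{p-2}\|g\|_{L_2}^2$ together with density of $C(K)$ and uniform boundedness.
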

\begin{proof}
Let $\delta > 0$ be as in Proposition~\ref{pdtnc522}\ref{pdtnc522-3}.
Let $0 \leq f \in L_\infty(K)$.
Then 
\[
0 \leq f
\leq \frac{\|f\|_\infty}{\delta} \, \delta
\leq \frac{\|f\|_\infty}{\delta} \, u_1
.  \]
Hence 
$S_t f \leq \frac{\|f\|_\infty}{\delta} \, S_t u_1
\leq \frac{\|f\|_\infty}{\delta} \, e^{-\lambda_1 t} \, u_1$
and $\|S_t f\|_\infty \leq M \, e^{-\lambda_1 t} \, \|f\|_\infty$
for all $t > 0$, where $M = \delta^{-1} \, \|u_1\|_\infty$.
Since $S$ is a self-adjoint semigroup, it follows by duality that 
$\|S_t f\|_1 \leq M \, e^{-\lambda_1 t} \, \|f\|_1$ for all $f \in L_2(K)$.
Then by interpolation
$\|S_t f\|_p \leq M \, e^{-\lambda_1 t} \, \|f\|_p$ for all $f \in L_2(K) \cap L_p(K)$.
Since the measure is finite the semigroup is a $C_0$-semigroup,
see \cite{Voi}.
\end{proof}

We emphasise that we do not assume in Proposition~\ref{pdtnc524}
that $S^c$ is a $C_0$-semigroup on $C(K)$.

\section{The Dirichlet-to-Neumann semigroup: invariance of $C(\Gamma)$} \label{Sdtnc2}

In this section we introduce the main setting of this paper and 
recall some known results for the Dirichlet-to-Neumann operator 
and the associated semigroup.

Let $\Omega \subset \Ri^d$ be a bounded open set with Lipschitz boundary.
For all $k,l \in \{ 1,\ldots,d \} $ let $a_{kl} \in L_\infty(\Omega,\Ri)$.
Suppose that 
\begin{equation}
a_{kl} = a_{lk}
\label{eSdtnc2;3}
\end{equation}
for all $k,l \in \{ 1,\ldots,d \} $
and that there exists a $\mu > 0$ such that 
\begin{equation}
\RRe \sum_{k,l=1}^d a_{kl}(x) \, \xi_k \, \overline{\xi_l} 
\geq \mu \, |\xi|^2
\label{eSdtnc2;1}
\end{equation}
for all $\xi \in \Ci^d$ and $x \in \Omega$.
Let $V \in L_\infty(\Omega,\Ri)$.
Define the forms $\gota,\gota_V \colon H^1(\Omega) \times H^1(\Omega) \to \Ci$ by 
\[
\gota(u,v) 
= \sum_{k,l=1}^d \int_\Omega a_{kl} \, (\partial_k u) \, \overline{\partial_l v}
\quad \mbox{and} \quad
\gota_V(u,v) 
= \gota(u,v) + \int_\Omega V \, u \, \overline v
.  \]
Let $A^N$ be the operator in $L_2(\Omega)$ associated with the form $\gota$
and let $A^D$ be the operator in $L_2(\Omega)$ associated with the form 
$\gota|_{H^1_0(\Omega) \times H^1_0(\Omega)}$.
Then $A^N + V$ is the operator associated with $\gota_V$ and 
$A^D + V$ is the operator associated with the form 
$\gota_V|_{H^1_0(\Omega) \times H^1_0(\Omega)}$.
We assume throughout this paper that 
\begin{equation}
0 \not\in \sigma(A^D + V)
.  
\label{eSdtnc2;2}
\end{equation}
Let $\Gamma$ be the boundary of $\Omega$.
We provide $\Gamma$ with the $(d-1)$-dimensional Hausdorff measure.
Let $D_V$ be the {\bf Dirichlet-to-Neumann operator} in $L_2(\Gamma)$ associated 
with $(\gota_V,\Tr)$.
This means the following.
If $\varphi,\psi \in L_2(\Gamma)$, then $\varphi \in D_V$ and $D_V \varphi = \psi$
if and only if there exists a $u \in H^1(\Omega)$ such that 
\[
\gota_V(u,v) 
= (\psi, \Tr v)_{L_2(\Gamma)}
\]
for all $v \in H^1(\Omega)$.
It follows from \cite{AEKS} Theorem~4.5, or 
\cite{BeE1} Theorem~5.10, that $D_V$ is a self-adjoint graph, 
which is indeed a self-adjoint operator because of the condition~(\ref{eSdtnc2;2}).
Moreover, $D_V$ is lower bounded by \cite{AEKS} Theorem~4.15.

We can give another description of the operator $D_V$, for which we need
the notion of a weak conormal derivative.
Let $H^{-1}(\Omega)$ be the dual space of $H^1_0(\Omega)$.
We define the operators $\ca, \ca+V \colon H^1(\Omega) \to H^{-1}(\Omega)$ by
\[
\langle \ca u,v \rangle
= \gota(u,v)
\quad \mbox{and} \quad 
\langle (\ca + V) u,v \rangle
= \gota_V(u,v)
.  \]
Let $u \in H^1(\Omega)$ and suppose that $\ca u \in L_2(\Omega)$.
Then we say that $u$ has a {\bf weak conormal derivative} 
if there exists a $\psi \in L_2(\Gamma)$ such that 
\[
\gota(u,v) - \int_\Omega (\ca u) \, \overline v
= \int_\Gamma \psi \, \overline{\tr v}
\]
for all $v \in H^1(\Omega)$.
By the Stone--Weierstrass it follows that the function $\psi$ is 
unique and we write $\partial_\nu u = \psi$.
Note that the conormal derivative depends on the coefficients $a_{kl}$,
which is suppressed in the notation.

With this notation the operator $A^N$ can be seen as the realization of $\ca$
in $L_2(\Omega)$ with Neumann boundary conditions, since
\[
\dom(A^N)
= \{ u \in H^1(\Omega) : \ca u \in L_2(\Omega) \mbox{ and } \partial_\nu u = 0 \}
  \]
and $A^N u = \ca u$ for all $u \in \dom(A^N)$.

The alluded characterisation of $D_V$ is as follows.

\begin{lemma} \label{ldtnc230}
Let $\varphi,\psi \in L_2(\Gamma)$.
Then the following are equivalent.
\begin{tabeleq}
\item \label{ldtnc230-1}
$\varphi \in \dom(D_V)$ and $D_V \varphi = \psi$.
\item \label{ldtnc230-2}
There exists a $u \in H^1(\Omega)$ such that $(\ca + V) u = 0$, 
$\Tr u = \varphi$ and $\partial_\nu u = \psi$.
\end{tabeleq}
\end{lemma}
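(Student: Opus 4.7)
The plan is to prove the two directions separately, using the interior test functions in $H^1_0(\Omega)$ to extract the equation $(\ca+V)u=0$ and the full $H^1(\Omega)$ test to identify the conormal derivative. Both directions boil down to comparing $\gota_V(u,v)$ with the sum $\langle(\ca+V)u,v\rangle + \int_\Gamma \psi\,\overline{\tr v}$.

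For the direction \ref{ldtnc230-2}$\Rightarrow$\ref{ldtnc230-1}, I would simply unwind the definitions. Since $(\ca+V)u=0$ in particular means $\ca u = -Vu \in L_2(\Omega)$, so the weak conormal derivative is defined, and the defining identity $\partial_\nu u = \psi$ reads
\[
\gota(u,v) - \int_\Omega (\ca u)\,\overline v = \int_\Gamma \psi\,\overline{\tr v}
\qquad (v \in H^1(\Omega)).
\]
Adding $\int_\Omega V u\,\overline v$ to both sides and using $\ca u + Vu = 0$ immediately gives $\gota_V(u,v) = (\psi,\tr v)_{L_2(\Gamma)}$ for all $v\in H^1(\Omega)$. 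Combined with $\tr u = \varphi$, this is exactly the defining property of $D_V\varphi = \psi$.

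For the converse \ref{ldtnc230-1}$\Rightarrow$\ref{ldtnc230-2}, choose $u\in H^1(\Omega)$ with $\tr u = \varphi$ and $\gota_V(u,v) = (\psi,\tr v)_{L_2(\Gamma)}$ for all $v\in H^1(\Omega)$. Restricting to $v\in H^1_0(\Omega)$ kills the boundary term, so $\langle (\ca+V)u,v\rangle = \gota_V(u,v) = 0$ for every $v\in H^1_0(\Omega)$; hence $(\ca+V)u = 0$ as an element of $H^{-1}(\Omega)$. In particular $\ca u = -Vu \in L_2(\Omega)$, so the notion of weak conormal derivative applies to $u$. Now for arbitrary $v\in H^1(\Omega)$,
\[
\gota(u,v) - \int_\Omega (\ca u)\,\overline v
= \gota(u,v) + \int_\Omega V u\,\overline v
= \gota_V(u,v)
= (\psi,\tr v)_{L_2(\Gamma)},
\]
which is the defining identity of $\partial_\nu u = \psi$.

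There is no real obstacle; the only subtlety is making sure that the interior equation $(\ca+V)u=0$, obtained from testing in $H^1_0(\Omega)$, is upgraded to $\ca u = -Vu \in L_2(\Omega)$ so that the weak conormal derivative makes sense. Uniqueness of $\psi$ then comes for free from the uniqueness clause in the definition of $\partial_\nu u$ stated earlier in the excerpt.
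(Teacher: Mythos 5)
Your proof is correct and is precisely the straightforward unwinding of the definitions that the paper has in mind when it ``leaves the easy proof to the reader'': test against $H^1_0(\Omega)$ to get $(\ca+V)u=0$, observe $\ca u=-Vu\in L_2(\Omega)$ so the weak conormal derivative is meaningful, and then compare the two defining identities on all of $H^1(\Omega)$. Nothing is missing.
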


We leave the easy proof to the reader.

\medskip

Let $S^V$ be the semigroup generated by $-D_V$.
In the next proposition we use elliptic regularity to show that 
the resolvent of $D_V$ leaves $C(\Gamma)$ invariant.

\begin{lemma} \label{ldtnc231}
For all $k,l \in \{ 1,\ldots,d \} $ let $a_{kl} \in L_\infty(\Omega,\Ri)$.
Let $V \in L_\infty(\Omega,\Ri)$.
Suppose {\rm (\ref{eSdtnc2;3})}, {\rm (\ref{eSdtnc2;1})} and {\rm (\ref{eSdtnc2;2})}
are valid.
Let $\omega \in \Ri$ be such that $\|S^V_t\|_{2 \to 2} \leq e^{\omega t}$ for all 
$t > 0$.
Let $\lambda \in (\omega,\infty)$ and $\psi \in C(\Gamma)$.
Then $(\lambda \, I + D_V)^{-1} \psi \in C(\Gamma)$.
\end{lemma}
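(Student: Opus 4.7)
My plan is to translate the resolvent equation for $D_V$ into an elliptic boundary value problem with a Robin-type boundary condition and then invoke regularity up to the boundary.

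First I would set $\varphi = (\lambda I + D_V)^{-1}\psi \in L_2(\Gamma)$; the hypothesis $\lambda > \omega$ ensures this is well-defined and $\varphi \in \dom(D_V)$ with $D_V\varphi = \psi - \lambda\varphi$. By Lemma~\ref{ldtnc230} there exists $u \in H^1(\Omega)$ satisfying $(\ca+V)u = 0$, $\Tr u = \varphi$ and $\partial_\nu u = \psi - \lambda\,\Tr u$. Equivalently, $u$ is the unique weak solution of
\[
\gota(u,v) + \int_\Omega V u \,\overline v + \lambda \int_\Gamma \Tr u\,\overline{\Tr v}
= \int_\Gamma \psi\,\overline{\Tr v}
\qquad (v \in H^1(\Omega)),
\]
a Robin-type problem with continuous boundary datum $\psi \in C(\Gamma)$ and $L_\infty$-coefficients $a_{kl}$, $V$.

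The heart of the argument is to show $u$ admits a continuous extension to $\overline\Omega$. In the interior I would appeal to the De~Giorgi--Nash--Moser theory: the equation reads $\ca u = -Vu$ with right-hand side in $L_\infty(\Omega)$, so $u$ is locally Hölder continuous in $\Omega$. For behaviour at the boundary I would invoke a boundary Hölder estimate for weak solutions of divergence-form equations with $L_\infty$-coefficients on Lipschitz domains satisfying a Robin-type boundary condition with $L_\infty$ right-hand side. The datum $\psi - \lambda\,\Tr u$ for $\partial_\nu u$ is in $L_\infty$ once we iterate (since $u \in L_\infty(\Omega)$ by Moser boundedness, hence $\Tr u \in L_\infty(\Gamma)$), and the boundary datum $\psi$ is continuous; such results (in the spirit of the Nittka-type regularity theorems for Robin problems on Lipschitz domains) yield $u \in C(\overline\Omega)$. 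Then $\varphi = \Tr u = u|_\Gamma$ is continuous on $\Gamma$.

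The main obstacle is precisely the boundary regularity step: interior Hölder continuity is standard, but for a Lipschitz domain (not $C^{1,\alpha}$) with merely $L_\infty$ coefficients and a Robin boundary condition, continuity up to $\Gamma$ needs a careful localization argument (flattening the boundary, reflecting, and a Moser-type iteration adapted to the mixed boundary condition). Everything else — the reduction to a Robin problem via Lemma~\ref{ldtnc230}, the $L_\infty$ bound on $u$, and the final identification $\Tr u = u|_\Gamma$ — is routine once the boundary regularity is in hand.
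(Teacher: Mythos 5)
Your approach---reducing the resolvent equation to a Robin-type variational problem with boundary datum $\psi$ and invoking Nittka-type boundary regularity on Lipschitz domains---is exactly what the paper does, citing Nittka's Theorem~3.14(ii) directly to conclude $u \in C(\overline\Omega)$. The intermediate Moser $L_\infty$ step you describe is superfluous: once you keep $\lambda\int_\Gamma \Tr u\,\overline{\Tr v}$ on the left as a Robin term with constant coefficient $\lambda$, the boundary datum is already the continuous function $\psi$, so Nittka's theorem applies in a single step without first bootstrapping $\Tr u$ into $L_\infty(\Gamma)$.
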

\begin{proof}
Write $\varphi = (\lambda \, I + D_V)^{-1} \psi \in L_2(\Gamma)$.
Then $D_V \varphi = \psi - \lambda \, \varphi$.
There exists a unique $u \in H^1(\Omega)$ such that 
$\Tr u = \varphi$ and $\gota_V(u,v) = \int_\Gamma (\psi - \lambda \, \varphi) \, \overline{\Tr v}$
for all $v \in H^1(\Omega)$.
Then
\[
\gota(u,v) 
   + \int_\Omega V \, u \, \overline v 
   + \lambda \int_\Gamma \Tr u \, \overline{\Tr v}
= \int_\Gamma \psi \, \overline{\Tr v}
\]
for all $v \in H^1(\Omega)$.
Hence by \cite{Nit4} Theorem~3.14(ii) one deduces that $u \in C(\overline \Omega)$.
So $\varphi \in C(\Gamma)$.
\end{proof}

Also the semigroup $S^V$ leaves $C(\Gamma)$ invariant.
Even stronger, it maps $L_1(\Gamma)$ into $C(\Gamma)$.

\begin{prop} \label{pdtnc232}
For all $k,l \in \{ 1,\ldots,d \} $ let $a_{kl} \in L_\infty(\Omega,\Ri)$.
Let $V \in L_\infty(\Omega,\Ri)$.
Suppose {\rm (\ref{eSdtnc2;3})}, {\rm (\ref{eSdtnc2;1})} and {\rm (\ref{eSdtnc2;2})}
are valid.
Then $S^V_t L_2(\Gamma) \subset C(\Gamma)$ for all $t > 0$.
\end{prop}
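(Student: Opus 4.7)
The plan is to combine the holomorphy of $S^V$ on $L_2(\Gamma)$ with the elliptic regularity statement already proved as Lemma~\ref{ldtnc231}. Fix $t > 0$ and $f \in L_2(\Gamma)$ and set $\varphi := S^V_t f$. Since $D_V$ is self-adjoint and lower bounded (by \cite{AEKS}), the semigroup $S^V$ is holomorphic on $L_2(\Gamma)$, so $\varphi \in \dom(D_V^n)$ for every $n \in \Ni$ and each $D_V^k \varphi$ is in $L_2(\Gamma)$ with norm controlled by $\|f\|_2$ (the constant blowing up as $t \downarrow 0$ but finite for each fixed $t$). In particular $\psi := D_V \varphi \in L_2(\Gamma)$.

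Next I would apply Lemma~\ref{ldtnc230} to pick $u \in H^1(\Omega)$ with $(\ca + V) u = 0$ weakly, $\Tr u = \varphi$ and $\partial_\nu u = \psi$. For any $\lambda > \omega$, this $u$ satisfies the Robin-type variational identity
\[
\gota(u,v) + \int_\Omega V \, u \, \overline v + \lambda \int_\Gamma \Tr u \, \overline{\Tr v} = \int_\Gamma (\lambda \varphi + \psi) \, \overline{\Tr v}
\]
for every $v \in H^1(\Omega)$, so that $\varphi = (\lambda + D_V)^{-1}(\lambda \varphi + \psi)$. If the boundary datum $\lambda \varphi + \psi$ were in $C(\Gamma)$, Lemma~\ref{ldtnc231} would finish the job; unfortunately \emph{a priori} it only lies in $L_2(\Gamma)$.

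To upgrade the regularity I would iterate. By the first paragraph, $\varphi = (\lambda + D_V)^{-n} h$ for some $h \in L_2(\Gamma)$ and $n$ arbitrarily large. At each factor I would invoke a quantitative form of Nittka's Theorem~3.14(ii): for the associated Robin problem with $L_p(\Gamma)$ data, the trace of the solution lies in some $L_q(\Gamma)$ with $q > p$, and in $C(\Gamma)$ once $p$ passes a critical exponent depending only on $d$. After finitely many iterations the boundary datum sits in such a space, Nittka delivers $u \in C(\overline\Omega)$, and hence $\varphi = \Tr u \in C(\Gamma)$.

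The main obstacle is precisely this quantitative $L_p \to L_q$ improvement of $(\lambda + D_V)^{-1}$ on $\Gamma$: Lemma~\ref{ldtnc231} supplies only the $C(\Gamma) \to C(\Gamma)$ statement, and since $C(\Gamma)$ is not dense in $L_\infty(\Gamma)$ one cannot simply approximate $L_2$-data by continuous data and pass to the limit. The essential technical input is therefore a Moser--De~Giorgi style bound for the Robin problem on Lipschitz domains with merely measurable coefficients, available from Nittka's paper; with that in hand, the bootstrap closes and the proposition follows.
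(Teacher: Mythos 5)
Your proposal follows essentially the same route as the paper: bootstrap the $L_p(\Gamma)$-regularity of the trace through Nittka's a priori estimates for the Robin/Neumann problem until the critical exponent for continuity is passed. The paper organizes the bootstrap via the semigroup factorization $S^V_{2t} = S^V_t S^V_t$, writing $D_V S^V_{2t}\varphi = S^V_t D_V S^V_t\varphi$ and feeding this improved datum into the paper's Lemma~\ref{ldtnc233} (a special case of Nittka's Lemma~3.11), with a final appeal to Nittka's Lemma~3.10 to land in $C(\Gamma)$; you instead use resolvent powers $(\lambda I + D_V)^{-n}$ together with holomorphy of $S^V$. These are interchangeable bookkeeping devices, and the ``Moser--De~Giorgi style bound'' you flag as the essential technical input is precisely the content of Lemma~\ref{ldtnc233}, so the reference is the right one. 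The only thing the sketch does not supply is the explicit exponent arithmetic: the paper sets $q_n = \frac{2d}{d+3-n}$ and verifies inductively that after $d-1$ iterations the trace lies in $L_{d-1+\varepsilon}(\Gamma)$, after which one more application of Nittka gives continuity. With that computation spelled out your argument would be a complete alternative writeup of the paper's proof.
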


For the proof we need the following lemma.

\begin{lemma} \label{ldtnc233}
Adopt the notation and assumptions of Proposition~\ref{pdtnc232}.
Suppose $d \geq 3$.
Let $q \in [\frac{2d}{d+2}, \frac{d}{2})$ and $\varepsilon > 0$.
Let $u \in H^1(\Omega)$ and 
$\psi \in L_2(\Gamma) \cap L_{\frac{(d-1) q}{d-q} + \varepsilon}(\Gamma)$.
Suppose that 
\[
\gota_V(u,v) = \int_\Gamma \psi \, \overline{\Tr v}
\]
for all $v \in H^1(\Omega)$.
Then $\Tr u \in L_{\frac{(d-1) q}{d-2q}}(\Gamma)$.
\end{lemma}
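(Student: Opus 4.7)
My plan.  The lemma really says that the trace gains one Sobolev derivative on the $(d-1)$-dimensional boundary $\Gamma$ relative to the boundary data: the exponents satisfy
\[
\frac{1}{r} \;=\; \frac{1}{p_\psi - \varepsilon} - \frac{1}{d-1},
\qquad r = \frac{(d-1)q}{d-2q},
\qquad p_\psi := \frac{(d-1)q}{d-q} + \varepsilon,
\]
which is exactly Sobolev embedding on $\Gamma$.  The small $\varepsilon>0$ provides the slack needed at the endpoint of this embedding.  I would approach the proof by a duality argument reducing the question to an a priori bound for an auxiliary Neumann-type problem, and then prove that a priori bound by a Stampacchia truncation.

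First I would shift:  set $\gotb := \gota_V + \lambda(\,\cdot\,,\,\cdot\,)_{L_2(\Omega)}$ with $\lambda$ large enough that $\gotb$ is coercive on $H^1(\Omega)$; the identity for $u$ becomes $\gotb(u,v) = \int_\Gamma \psi\,\overline{\Tr v} + \lambda\int_\Omega u\,\overline v$, and since $u \in L_{2^*}(\Omega)$ is already fixed, the extra $L_2$-term is a harmless lower-order correction in what follows.  To prove $\Tr u \in L_r(\Gamma)$ it suffices, by duality on $\Gamma$, to bound $|\int_\Gamma g\,\overline{\Tr u}|$ by $C\|g\|_{L_{r'}(\Gamma)}$ for every $g$ in the dense subspace $L_2(\Gamma) \cap L_{r'}(\Gamma)$ of $L_{r'}(\Gamma)$.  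Since $r' \leq 2(d-1)/d$ (easily checked from $q \geq 2d/(d+2)$), the functional $v \mapsto \int_\Gamma g\,\overline{\Tr v}$ is continuous on $H^1(\Omega)$, so Lax--Milgram yields a unique $w \in H^1(\Omega)$ with $\gotb(w,v) = \int_\Gamma g\,\overline{\Tr v}$ for all $v$.  The symmetry $a_{kl} = a_{lk}$ then gives
\[
\int_\Gamma g\,\overline{\Tr u} \;=\; \overline{\gotb(w,u)} - \lambda \int_\Omega \overline w \, u + (\text{harmless}) \;=\; \int_\Gamma \overline\psi\,\Tr w + (\text{lower-order}),
\]
which H\"older bounds by $\|\psi\|_{L_{p_\psi}(\Gamma)}\,\|\Tr w\|_{L_{p_\psi'}(\Gamma)}$ plus the lower-order piece.

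The crux is the a priori estimate $\|\Tr w\|_{L_{p_\psi'}(\Gamma)} \leq C\|g\|_{L_{r'}(\Gamma)}$: morally, the Neumann-to-Dirichlet map is an operator of order $-1$ on $\Gamma$.  Because $\Omega$ is only Lipschitz and the coefficients are merely $L_\infty$, classical pseudodifferential arguments are unavailable, so I would proceed by Stampacchia truncation.  Testing the weak formulation for $w$ against $v_k = (|w|-k)^+\,\sgn(w)$, combining coercivity with the trace embedding $H^1(\Omega)\hookrightarrow L_{2(d-1)/(d-2)}(\Gamma)$, and using H\"older on $\{|\Tr w|>k\}$ — where the positive gap supplied by $\varepsilon$ gives the strict exponent $\theta := 1/p_\psi' - (d-2)/(2(d-1)) > 0$ — yields a decay estimate of the form
\[
(k-h)^\alpha\, |\{|\Tr w|>k\} \cap \Gamma| \;\leq\; C\,\|g\|_{L_{r'}(\Gamma)}^\alpha\, |\{|\Tr w|>h\} \cap \Gamma|^{1+\beta}
\]
for all $k > h > 0$, with computable $\alpha,\beta > 0$.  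Stampacchia's lemma then upgrades this to the claimed $L_{p_\psi'}$-bound on $\Tr w$, and combining with the duality above finishes the proof.

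The main obstacle is the exponent bookkeeping in the Stampacchia step.  With the single free parameter $q$ tying $p_\psi$ to $r$, the exponents $\alpha$ and $\beta$ must conspire to terminate the iteration at precisely $L_{p_\psi'}$ rather than at a weaker space, and a careful accounting of how the trace inequality, the Sobolev conjugate $2^{**}=2(d-1)/(d-2)$, and the H\"older exponent $\theta$ interact is required.  The hypothesis $\varepsilon > 0$ is precisely what forces $\beta > 0$; at the endpoint $\varepsilon = 0$ the decay estimate becomes flat and the self-improvement degenerates, reflecting the fact that the borderline case of this Sobolev embedding on $\Gamma$ would need finer function-space tools (Lorentz spaces or BMO) to close.
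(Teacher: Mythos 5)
For context: the paper's own ``proof'' of this lemma is a one-line citation of Nittka \cite{Nit4}, Lemma~3.11, so any self-contained argument is necessarily a different route. Your overall architecture --- the exponent identity $\frac{1}{r}=\frac{1}{p_\psi-\varepsilon}-\frac{1}{d-1}$, duality against an auxiliary Neumann problem, and a level-set/test-function iteration in which $\varepsilon$ supplies the subcritical slack --- is in the right spirit and is close to what the cited source actually does. However, two steps fail as written.

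First, the continuity claim is backwards. The trace maps $H^1(\Omega)$ into $L_{2(d-1)/(d-2)}(\Gamma)$, so $v\mapsto\int_\Gamma g\,\overline{\Tr v}$ is bounded on $H^1(\Omega)$ precisely when $g\in L_s(\Gamma)$ with $s\ge \tfrac{2(d-1)}{d}$; your observation $r'\le\tfrac{2(d-1)}{d}$ points the wrong way (equality holds only at $q=\tfrac{2d}{d+2}$). Taking $g\in L_2(\Gamma)\cap L_{r'}(\Gamma)$ rescues the existence of $w$, but then the Lax--Milgram bound on $\|w\|_{H^1(\Omega)}$, and hence on the ``harmless'' term $\lambda\int_\Omega \overline u\,w$, is in terms of $\|g\|_{L_2(\Gamma)}$ rather than $\|g\|_{L_{r'}(\Gamma)}$. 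Controlling that term by $\|g\|_{L_{r'}(\Gamma)}$ alone amounts to an $L_{r'}(\Gamma)\to L_2(\Omega)$ bound for the Neumann solution operator, an estimate of the same nature as the one you are proving; it cannot be dismissed as lower order. Second, and more seriously, the Stampacchia step as described proves the wrong thing: a decay inequality $\phi(k)\le C\,(k-h)^{-\alpha}\,\|g\|_{L_{r'}(\Gamma)}^{\alpha}\,\phi(h)^{1+\beta}$ for the distribution function $\phi(k)=|\{|\Tr w|>k\}|$ with $\beta>0$ forces $\phi$ to vanish at a finite level, i.e.\ $\Tr w\in L_\infty(\Gamma)$. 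Since $r'\le\tfrac{2(d-1)}{d}<d-1$ for $d\ge3$, boundedness of $\Tr w$ is false in general, so the exponent you can actually achieve satisfies $\beta\le 0$, and that branch of Stampacchia's lemma yields only a weak-type bound. Reaching the strong $L_{p_\psi'}$-estimate requires either Marcinkiewicz interpolation between two such weak-type bounds or, as in Nittka's argument, testing with a power $|w|^{2\gamma-2}w$ for a suitably chosen $\gamma$ and feeding the result through the trace embedding. As it stands, the crux a priori estimate is not established, so the proof is incomplete.
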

\begin{proof}
This is a special case of \cite{Nit4} Lemma~3.11.
\end{proof}

\begin{proof}[{\bf Proof of Proposition~\ref{pdtnc232}.}]
First we show that for all $t > 0$
and $\varphi \in L_2(\Gamma)$ 
there exists an $\varepsilon > 0$ such that 
$S^V_t \varphi \in L_{d-1+\varepsilon}(\Gamma)$.
For this we may assume that $d \geq 3$, since the case $d=2$ is trivial.
For all $n \in \{ 1,\ldots,d-1 \} $ define 
\[
q_n = \frac{2d}{d+3-n}
.  \]
Then $q_1 = \frac{2d}{d+2}$, $q_{d-2} = \frac{2d}{5}$ and $q_{d-1} = \frac{d}{2}$.
Moreover, $q_{n+1} = \frac{q_n d}{d - \frac{1}{2} \, q_n}$ for all $n \in \{ 1,\ldots,d-2 \} $.
We shall show that for all $t > 0$, $\varphi \in L_2(\Gamma)$ and $n \in \{ 1,\ldots,d-1 \} $
there exists an $\varepsilon > 0$ such that 
$S^V_t \varphi \in L_{\frac{(d-1) q_n}{d-q_n} + \varepsilon}(\Gamma)$.
The proof is by induction on $n$.

Since $\frac{(d-1) q_1}{d - q_1} = 2 \frac{d-1}{d} < 2$, the case $n = 1$
is trivial.
Let $n \in \{ 1,\ldots,d-2 \} $ and suppose that 
for all $t > 0$ and  $\varphi \in L_2(\Gamma)$ 
there exists an $\varepsilon > 0$ such that 
$S^V_t \varphi \in L_{\frac{(d-1) q_n}{d-q_n} + \varepsilon}(\Gamma)$.
Let $t > 0$ and $\varphi \in L_2(\Gamma)$.
Set $\psi = S^V_t \, D_V \, S^V_t \varphi$.
Then there exists an $\varepsilon > 0$ such that 
$\psi \in L_{\frac{(d-1) q_n}{d-q_n} + \varepsilon}(\Gamma)$ by the induction hypothesis.
Note that $D_V \, S^V_{2t} \varphi = \psi$.
So by definition there exists a $u \in H^1(\Omega)$ such that 
$\Tr u = S^V_{2t} \varphi$ and 
$\gota_V(u,v) = \int_\Gamma \psi \, \overline{\Tr v}$
for all $v \in H^1(\Omega)$.
Because $q_n \leq q_{d-2} < \frac{d}{2}$ one deduces from Lemma~\ref{ldtnc233}
that $\Tr u \in L_{\frac{(d-1) q_n}{d-2q_n}}(\Gamma)$.
Since 
$\frac{(d-1) q_{n+1}}{d - q_{n+1}} 
= \frac{(d-1) q_n}{d - q_n}
< \frac{(d-1) q_n}{d - 2 q_n}$,
there exists an $\varepsilon' > 0$ such that 
$S^V_{2t} \varphi = \Tr u \in L_{\frac{(d-1) q_{n+1}}{d-q_{n+1}} + \varepsilon'}(\Gamma)$,
which completes the induction step.
So by induction for all $t > 0$ and $\varphi \in L_2(\Gamma)$
there exists an $\varepsilon > 0$ such that 
$S^V_t \varphi \in L_{\frac{(d-1) q_{d-1}}{d-q_{d-1}} + \varepsilon}(\Gamma)$.
But $\frac{(d-1) q_{d-1}}{d-q_{d-1}} = d-1$.

Thus we proved for all $d \geq 2$, $t > 0$
and $\varphi \in L_2(\Gamma)$ that
there exists an $\varepsilon > 0$ such that 
$S^V_t \varphi \in L_{d-1+\varepsilon}(\Gamma)$.
Now one can argue once again as above and use this time \cite{Nit4} Lemma~3.10
to deduce that $S^V_{2t} \varphi \in C(\Gamma)$ for all $t > 0$
and $\varphi \in L_2(\Gamma)$.
\end{proof}

\begin{cor} \label{cdtnc330}
For all $k,l \in \{ 1,\ldots,d \} $ let $a_{kl} \in L_\infty(\Omega,\Ri)$.
Let $V \in L_\infty(\Omega,\Ri)$.
Suppose {\rm (\ref{eSdtnc2;3})}, {\rm (\ref{eSdtnc2;1})} and {\rm (\ref{eSdtnc2;2})}
are valid.
Then $S^V$ has a continuous kernel.
\end{cor}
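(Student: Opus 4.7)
The plan is to simply combine Proposition~\ref{pdtnc232} with the self-adjointness of $D_V$ and feed the result into the abstract criterion Theorem~\ref{tdtnc240}.

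First I would set $K = \Gamma$ with the metric inherited from $\Ri^d$ and $\mu$ equal to the $(d-1)$-dimensional Hausdorff measure. Since $\Omega$ is bounded and has Lipschitz boundary, $\Gamma$ is a compact metric space and $\mu$ is a finite Borel measure on $\Gamma$, so we are in the setting of Theorem~\ref{tdtnc240}. Note that $S^V$ need not be assumed to be a $C_0$-semigroup for the theorem to apply; only the semigroup property is needed.

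Next I would observe that $D_V$ is self-adjoint on $L_2(\Gamma)$ under our standing assumptions (this is recorded in the text right after the definition of $D_V$, via \cite{AEKS} Theorem~4.5 and condition~(\ref{eSdtnc2;2})). Consequently $S^V_t = (S^V_t)^*$ for every $t > 0$. By Proposition~\ref{pdtnc232} we have $S^V_t L_2(\Gamma) \subset C(\Gamma)$ for all $t > 0$, and then self-adjointness immediately yields $(S^V_t)^* L_2(\Gamma) = S^V_t L_2(\Gamma) \subset C(\Gamma)$.

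Condition~\ref{tdtnc240-1} of Theorem~\ref{tdtnc240} is therefore satisfied, and the implication \ref{tdtnc240-1}$\Rightarrow$\ref{tdtnc240-2} of that theorem gives that $S^V_t$ has a continuous kernel for every $t > 0$. There is no real obstacle here: Proposition~\ref{pdtnc232} has already done all the analytic work, and self-adjointness bypasses the need to verify anything about the adjoint semigroup separately.
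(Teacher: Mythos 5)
Your proposal is correct and is essentially the paper's own (one-line) proof: combine Proposition~\ref{pdtnc232} with Theorem~\ref{tdtnc240}, using self-adjointness of $D_V$ to handle the adjoint condition. You have simply made explicit the details the paper leaves implicit.
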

\begin{proof}
This follows from Proposition~\ref{pdtnc232} and Theorem~\ref{tdtnc240}.
\end{proof}

For all $t > 0$ define $T^V_t \colon C(\Gamma) \to C(\Gamma)$
by 
\[
T^V_t = S^V_t|_{C(\Gamma)}
.  \]
Obviously $T^V = (T^V_t)_{t > 0}$ is a semigroup, but it is 
unclear whether it is a $C_0$-semigroup.
Define the part $D_{V,c}$ of $D_V$ in $C(\Gamma)$ by 
\[
\dom(D_{V,c})
= \{ \varphi \in C(\Gamma) \cap \dom(D_V) : D_V \varphi \in C(\Gamma) \}
\]
and $D_{V,c} \varphi = D_V \varphi$ for all $\varphi \in \dom(D_{V,c})$.
If $T^V$ is a $C_0$-semigroup, then $-D_{V,c}$ is the generator of $T^V$
and consequently $\dom(D_{V,c})$ is dense in $C(\Gamma)$.

\section{Density of the domain in $C(\Gamma)$} \label{Sdtnc3}

In this section we shall prove that the operator $D_{V,c}$ has dense domain
if the coefficients $a_{kl}$ are Lipschitz continuous.

\begin{thm} \label{tdtnc201}
For all $k,l \in \{ 1,\ldots,d \} $ let $a_{kl} \in W^{1,\infty}(\Omega,\Ri)$.
Let $V \in L_\infty(\Omega,\Ri)$.
Suppose {\rm (\ref{eSdtnc2;3})}, {\rm (\ref{eSdtnc2;1})} and {\rm (\ref{eSdtnc2;2})}
are valid.
Then the domain $\dom(D_{V,c})$ of the operator $D_{V,c}$ is dense in $C(\Gamma)$.
\end{thm}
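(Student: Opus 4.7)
The plan is to show that $\psi_\lambda := \lambda(\lambda I + D_V)^{-1}\varphi \to \varphi$ in $C(\Gamma)$ as $\lambda \to \infty$, for every $\varphi \in C(\Gamma)$. First, Lemma~\ref{ldtnc231} gives $\psi_\lambda \in C(\Gamma)$, and then $D_V\psi_\lambda = \lambda\varphi - \lambda\psi_\lambda \in C(\Gamma)$, so $\psi_\lambda \in \dom(D_{V,c})$. Consequently, the convergence $\psi_\lambda \to \varphi$ in $C(\Gamma)$ immediately yields density of $\dom(D_{V,c})$ in $C(\Gamma)$.

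To obtain the convergence I would interpret $\psi_\lambda$ as a boundary trace. For $\lambda$ large enough that the sesquilinear form $(u,v) \mapsto \gota_V(u,v) + \lambda\int_\Gamma \Tr u \cdot \overline{\Tr v}$ is coercive on $H^1(\Omega)$, let $u_\lambda \in H^1(\Omega)$ be the unique weak solution of
\[
\gota_V(u_\lambda,v) + \lambda \int_\Gamma \Tr u_\lambda \cdot \overline{\Tr v} = \lambda \int_\Gamma \varphi \cdot \overline{\Tr v} \qquad (v \in H^1(\Omega)).
\]
Then $(\ca + V) u_\lambda = 0$, the weak conormal derivative satisfies $\partial_\nu u_\lambda = \lambda(\varphi - \Tr u_\lambda)$, and $\Tr u_\lambda = \psi_\lambda$. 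In parallel, let $u \in H^1(\Omega)$ solve the Dirichlet problem $(\ca + V)u = 0$ with $\Tr u = \varphi$. By \cite{Nit4} Theorem~3.14, one has $u \in C(\overline{\Omega})$. It therefore suffices to prove that $u_\lambda \to u$ in $C(\overline{\Omega})$, because taking traces then yields $\psi_\lambda \to \varphi$ in $C(\Gamma)$.

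I would establish this convergence in three substeps. First, an energy estimate: writing $u_\lambda = w_\lambda + \tilde{\varphi}$ for some fixed $H^1(\Omega)$-extension $\tilde{\varphi}$ of $\varphi$ and testing against $w_\lambda$ gives, for $\lambda$ large, both the uniform $H^1$-bound $\|u_\lambda\|_{H^1(\Omega)} \leq C$ and the trace bound $\|\Tr u_\lambda - \varphi\|_{L_2(\Gamma)}^2 \leq C/\lambda$. Passing to the limit in the weak formulation identifies the weak $H^1$-limit as $u$, so in particular $u_\lambda \to u$ strongly in $L_2(\Omega)$ by Rellich. Second, a uniform bound in $L_\infty(\Omega)$: by Moser/Stampacchia truncation applied to the Robin-type equation (as in \cite{Nit4}), one has $\|u_\lambda\|_{L_\infty(\Omega)} \leq M$ with $M$ independent of $\lambda$. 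Third, a uniform modulus of continuity: interior De Giorgi--Nash--Moser theory gives uniform interior Hölder bounds, and the boundary Hölder estimate for Robin problems on a Lipschitz domain with $W^{1,\infty}$ coefficients (again via \cite{Nit4}) gives uniform Hölder control up to $\Gamma$. Arzelà--Ascoli now makes $\{u_\lambda\}$ precompact in $C(\overline{\Omega})$, and since $u_\lambda \to u$ in $L_2(\Omega)$ every $C(\overline{\Omega})$-cluster point must equal $u$, so $u_\lambda \to u$ in $C(\overline{\Omega})$.

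The main obstacle is the third substep: producing a uniform modulus of continuity up to the Lipschitz boundary for the Robin solutions $u_\lambda$ with parameter $\lambda \to \infty$ and merely continuous boundary data $\varphi$. This is precisely where the hypothesis $a_{kl} \in W^{1,\infty}$ should enter, through the boundary regularity theory for elliptic problems on Lipschitz domains. Once this uniform equicontinuity is in hand the passage from the (easy) $L_2$-convergence to the (desired) uniform convergence is automatic, and density of $\dom(D_{V,c})$ follows.
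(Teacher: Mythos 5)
Your overall strategy (show $\lambda(\lambda I + D_V)^{-1}\varphi\to\varphi$ in $C(\Gamma)$ by viewing the resolvent as a Robin problem with penalisation parameter $\lambda$) is genuinely different from the paper's, but it founders on exactly the point you yourself flag as ``the main obstacle''. The uniform-in-$\lambda$ modulus of continuity up to $\Gamma$ for the solutions $u_\lambda$ of $\partial_\nu u_\lambda+\lambda\,\Tr u_\lambda=\lambda\varphi$ is not supplied by the regularity theory you invoke: the boundary H\"older estimates of \cite{Nit4} for Robin problems carry constants depending on the $L_\infty$-norm of the Robin coefficient and on the norm of the boundary datum, and here both are of order $\lambda\to\infty$. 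As $\lambda\to\infty$ the Robin problem degenerates to the Dirichlet problem, and a boundary estimate for this singularly perturbed family that is uniform in $\lambda$ is not available in the cited literature; establishing one would be a substantial result in its own right. Since your whole argument reduces to this unproved equicontinuity, the proposal does not yet constitute a proof.

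There is a second, more elementary error. The form $(u,v)\mapsto\gota_V(u,v)+\lambda\int_\Gamma\Tr u\,\overline{\Tr v}$ is in general not coercive on $H^1(\Omega)$ for \emph{any} $\lambda$: on $H^1_0(\Omega)$ the boundary term vanishes, and the theorem only assumes $0\notin\sigma(A^D+V)$, so $\gota_V|_{H^1_0(\Omega)\times H^1_0(\Omega)}$ may be indefinite (the potential may be very negative). Your energy estimate must instead exploit the lower bound of $D_V$, namely $\gota_V(u_\lambda,u_\lambda)=(D_V\Tr u_\lambda,\Tr u_\lambda)_{L_2(\Gamma)}\geq-\omega\,\|\Tr u_\lambda\|_{L_2(\Gamma)}^2$, and the resulting uniform $H^1$-bound only holds when $\varphi$ lies in the form domain of $D_V$, i.e.\ $\varphi\in\Tr H^1(\Omega)$. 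This also forces the (harmless but necessary) reduction to $\varphi=F|_\Gamma$ with $F$ smooth, since a general $\varphi\in C(\Gamma)$ need not admit an $H^1(\Omega)$-extension $\tilde\varphi$. For comparison, the paper's proof avoids any estimate that must be uniform in a degenerating parameter: it approximates $\varphi$ by traces of $(\ca+V)$-harmonic functions in $C^1(\overline\Omega)\cap H^2(\Omega)$, constructed via exterior smooth approximating domains together with the uniform H\"older estimates of \cite{ERe2}; these traces lie in $\dom(D_V)\cap C(\Gamma)$ with bounded but possibly discontinuous conormal derivative, and the discontinuity is then removed by solving a \emph{fixed} Neumann problem whose solution operator is continuous from $L_d(\Gamma)$ into $C(\overline\Omega)$.
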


For the proof we need a lot of preparation.
Throughout this section we adopt the assumptions of Theorem~\ref{tdtnc201}.

\medskip

We aim to prove that $D_{V,c}$ has a dense domain,
that is that there are sufficiently many $u \in H^1(\Omega)$ 
such that $(\ca + V) u = 0$, $\Tr u$ is continuous,
the function $u$ has a weak conormal derivative and $\partial_\nu u$ 
is continuous.
The next lemma gives existence of 
a class of functions on $\Omega$ with continuous trace, which 
have a weak conormal derivative and the conormal 
derivative is bounded (but not necessarily continuous).

\begin{lemma} \label{ldtnc203}
Let $u \in C^1(\overline \Omega) \cap H^2(\Omega)$.
Then $u$ has a weak conormal derivative and 
$\partial_\nu u \in L_\infty(\Gamma)$.
\end{lemma}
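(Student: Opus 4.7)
The plan is to compute the weak conormal derivative explicitly by integration by parts. Since $a_{kl} \in W^{1,\infty}(\Omega,\Ri)$ and $u \in H^2(\Omega)$, the product rule shows that $a_{kl} \, \partial_k u \in H^1(\Omega)$: its weak derivative $\partial_l(a_{kl} \, \partial_k u) = (\partial_l a_{kl})(\partial_k u) + a_{kl} \, \partial_l \partial_k u$ lies in $L_2(\Omega)$, the first summand since $\partial_l a_{kl} \in L_\infty$ and $\partial_k u \in L_2$, and the second since $a_{kl} \in L_\infty$ and $\partial_l \partial_k u \in L_2$. Consequently $w := -\sum_{k,l} \partial_l(a_{kl} \, \partial_k u) \in L_2(\Omega)$, and I claim that $\ca u = w$ in the sense used in the paper.

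To verify this and at the same time identify the conormal derivative, I will apply the divergence theorem in the Lipschitz domain $\Omega$ to the vector field with $l$-th component $F_l = \sum_k a_{kl} \, \partial_k u \in H^1(\Omega)$ tested against $v \in H^1(\Omega)$. The Gauss--Green identity for Lipschitz domains yields, for each $l$,
\[
\int_\Omega F_l \, \overline{\partial_l v}
= -\int_\Omega (\partial_l F_l) \, \overline v
   + \int_\Gamma F_l \, \nu_l \, \overline{\Tr v} \, d\sigma .
\]
Summing over $l$ gives
\[
\gota(u,v)
= \int_\Omega w \, \overline v + \int_\Gamma \psi \, \overline{\Tr v} \, d\sigma,
\qquad
\psi := \sum_{k,l=1}^d a_{kl} \, (\partial_k u) \, \nu_l .
\]
Pairing with $v \in H^1_0(\Omega)$ shows $\langle \ca u, v\rangle = \gota(u,v) = \int_\Omega w \, \overline v$, so $\ca u = w \in L_2(\Omega)$, and then the displayed identity is exactly the defining equation for the weak conormal derivative with $\partial_\nu u = \psi$.

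It remains to check that $\psi \in L_\infty(\Gamma)$. Since $a_{kl} \in W^{1,\infty}(\Omega)$ coincides with a Lipschitz continuous function on $\overline\Omega$, its restriction (trace) to $\Gamma$ is bounded; $\partial_k u \in C(\overline\Omega)$ by hypothesis, hence bounded on $\Gamma$; and the outward unit normal $\nu$ is an $L_\infty(\Gamma)$ vector field because $\Omega$ has Lipschitz boundary. Therefore $\psi$ is a finite sum of products of $L_\infty(\Gamma)$ functions, so $\psi \in L_\infty(\Gamma)$, which completes the proof.

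There is no real obstacle beyond the routine verification that the Gauss--Green formula is applicable: this works because $F_l \in H^1(\Omega)$ and $\Omega$ is Lipschitz, which is the standard setting for integration by parts with $H^1$ vector fields and $H^1$ test functions.
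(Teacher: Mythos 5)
Your proof is correct and follows essentially the same route as the paper: both identify $\ca u$ with $-\sum_{k,l}\partial_l(a_{kl}\,\partial_k u)\in L_2(\Omega)$ and read off $\partial_\nu u=\sum_{k,l}\nu_l\,\Tr(a_{kl}\,\partial_k u)$ from the divergence theorem on the Lipschitz domain, the only (immaterial) difference being that the paper folds the test function into the vector field and passes from $v\in C^\infty(\overline\Omega)$ to $v\in H^1(\Omega)$ by density, whereas you invoke the $H^1$--$H^1$ Gauss--Green identity directly.
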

\begin{proof}
Since the $a_{kl} \in W^{1,\infty}(\Omega)$ it follows that 
$\ca u \in L_2(\Omega)$.
Let $v \in C^\infty(\overline \Omega)$.
For all $k \in \{ 1,\ldots,d \} $ define $F_k \colon \overline \Omega \to \Ci$ by 
$F_k = \overline v \, \sum_{l=1}^d a_{kl} \, (\partial_l u)$.
Then $F_k \in C(\overline \Omega) \cap H^1(\Omega)$.
Moreover, 
$\divv F 
= \sum_{k,l=1}^d a_{kl} \, (\partial_k u) \, \overline{\partial_l v} - (\ca u) \, \overline v
\in L_1(\Omega)$.
Hence the divergence theorem gives
\begin{eqnarray*}
\sum_{k,l=1}^d \int_\Omega a_{kl} \, (\partial_k u) \, \overline{\partial_l v}
   - \int_\Omega (\ca u) \, \overline v
& = & \int_\Omega \divv F   \nonumber  \\
& = & \sum_{k=1}^d \int_\Gamma \nu_k \, \Tr F_k
= \sum_{k,l=1}^d \int_\Gamma (\nu_k \, \Tr(a_{kl} \, \partial_l u)) \, \overline{\Tr v}
,
\end{eqnarray*}
where $\nu$ is the normal vector.
Then by density
\[
\sum_{k,l=1}^d \int_\Omega a_{kl} \, (\partial_k u) \, \overline{\partial_l v}
   - \int_\Omega (\ca u) \, \overline v
= \sum_{k,l=1}^d \int_\Gamma \Big( \nu_k \, \Tr(a_{kl} \, \partial_l u) \Big) \, \overline{\Tr v}
\]
for all $v \in H^1(\Omega)$.
So $u$ has a weak conormal derivative and 
$\partial_\nu u 
= \sum_{k,l=1}^d \nu_k \, \Tr(a_{kl} \, \partial_l u)
\in L_\infty(\Gamma)$.
\end{proof}

Our next aim is to show that one can approximate an element of $C(\Gamma)$
by functions $u|_\Gamma$, where $u \in C^1(\overline \Omega) \cap H^2(\Omega)$
and $(\ca + V) u = 0$.
We will show this in Lemma~\ref{ldtnc206}.
For such $u$ one deduces from the previous lemma that 
$u|_\Gamma \in \dom(D_V) \cap C(\Gamma)$ and 
$D_V (u|_\Gamma) = \partial_\nu u \in L_\infty(\Gamma)$.

The first ingredient is that the Lipschitz domain $\Omega$ can be approximated 
from outside by smooth domains.

\begin{lemma} \label{ldtnc205}
There exist $c_1,c_2 > 0$ and $\Omega_1,\Omega_2,\ldots \subset \Ri^d$
such that the following is valid.
\begin{tabel}
\item \label{ldtnc205-1}
For all $n \in \Ni$ the set $\Omega_n$ is open bounded with $C^\infty$-boundary.
Moreover, 
$\overline \Omega \subset \Omega_{n+1} \subset \Omega_n \subset \Omega + B(0,\frac{1}{n})$.
\item \label{ldtnc205-1.5}
$\bigcap_{n=1}^\infty \overline{\Omega_n} = \overline \Omega$.
\item \label{ldtnc205-2}
For all $n \in \Ni$ and $z \in \Gamma$ there exists a $z' \in \Omega_n^c$
such that $|z-z'| \leq \frac{c_1}{n}$.
\item \label{ldtnc205-3}
If $n \in \Ni$, $z \in \partial \Omega_n$ and $r \in (0,1]$,
then $|B(z,r) \setminus \Omega_n| \geq c_2 \, r^d$.
\end{tabel}
\end{lemma}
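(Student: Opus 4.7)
The plan is to build $\Omega_n$ as smooth superlevel sets of mollified characteristic functions of thin tubular enlargements of $\Omega$. Fix a standard mollifier $\eta \in C_c^\infty(B(0,1))$ with $\eta \ge 0$ and $\int \eta = 1$, set $\eta_\varepsilon(x) = \varepsilon^{-d} \eta(x/\varepsilon)$, and for $t > 0$ let $\Omega^t = \{x \in \Ri^d : d(x,\overline \Omega) < t\}$. Choose $\varepsilon_n \downarrow 0$ satisfying $3\varepsilon_n \le \min\{\varepsilon_{n-1}, 1/n\}$ (for example $\varepsilon_n = 3^{-n-1}$) and define
\[
f_n = \eta_{\varepsilon_n} \ast \one_{\Omega^{2\varepsilon_n}} \in C^\infty(\Ri^d),
\]
so that $f_n \equiv 1$ on $\Omega^{\varepsilon_n}$, $f_n \equiv 0$ off $\Omega^{3\varepsilon_n}$, and $0 \le f_n \le 1$. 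By Sard's theorem pick a regular value $t_n \in (1/3, 2/3)$ and set $\Omega_n = \{f_n > t_n\}$. Then $\partial \Omega_n = \{f_n = t_n\}$ is a $C^\infty$ hypersurface, and the sandwich $\overline\Omega \subset \Omega^{\varepsilon_n} \subset \Omega_n \subset \Omega^{3\varepsilon_n} \subset \Omega + B(0,1/n)$ together with $\Omega^{3\varepsilon_{n+1}} \subset \Omega^{\varepsilon_n}$ yields (a) and (b) at once.

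For (c), I invoke the uniform exterior cone property of the Lipschitz set $\Omega$: there exist $\theta_0 \in (0, \pi/2)$ and $r_0 > 0$ such that for each $z \in \Gamma$ one can choose a unit vector $\xi(z)$ with $z + s\xi(z) + B(0, s\sin\theta_0) \subset (\overline\Omega)^c$ for every $s \in (0, r_0]$. Setting $s_n = \min\{3\varepsilon_n / \sin\theta_0, r_0\}$ and $z' = z + s_n \xi(z)$ gives $d(z', \Omega) \ge s_n \sin\theta_0$; as soon as $3\varepsilon_n/\sin\theta_0 \le r_0$ this forces $d(z',\Omega) \ge 3\varepsilon_n$, so $z' \notin \Omega^{3\varepsilon_n} \supset \Omega_n$, while $|z-z'| \le (1/\sin\theta_0)(1/n)$; the finitely many exceptional small~$n$ are absorbed into the constant $c_1$.

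For (d) I split into two regimes. If $r \ge C \varepsilon_n$ for a large constant $C$, pick $z_0 \in \Gamma$ with $|z - z_0| \le 3\varepsilon_n$ (possible because $\partial\Omega_n \subset \Omega^{3\varepsilon_n}$). The exterior cone at $z_0$ then provides a region $K \subset B(z,r) \setminus \Omega^{3\varepsilon_n} \subset B(z,r) \setminus \Omega_n$ of measure $\gtrsim r^d$, for instance the truncated cone of points along $\xi(z_0)$ at distances in $[r/4, r/2]$, whose distance to $\Omega$ is at least a positive multiple of $r$, hence at least $3\varepsilon_n$ once $C$ is large. If $r < C\varepsilon_n$, the bound must come from the smoothness of $\Omega_n$: here one wants the quantitative non-degeneracy $|\nabla f_n| \gtrsim \varepsilon_n^{-1}$ on $\{f_n = t_n\}$, obtainable by a coarea/pigeonhole refinement of Sard's theorem in the choice of $t_n$, together with the routine bound $|\nabla^2 f_n| \lesssim \varepsilon_n^{-2}$; these produce principal curvatures $\lesssim \varepsilon_n^{-1}$ on $\partial\Omega_n$, so at scales $r \lesssim \varepsilon_n$ the boundary is essentially flat inside $B(z,r)$ and $|B(z,r) \setminus \Omega_n| \ge c r^d$.

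The main obstacle is this last small-scale estimate in (d): Sard alone gives a regular value, not a quantitative gradient lower bound, so extracting uniform curvature control from the mollified construction requires some care. A cleaner alternative, which I would probably prefer in writing, is to argue in local Lipschitz charts: cover $\Gamma$ by finitely many balls in which, after rotation, $\Omega = \{x_d > \phi(x')\}$ with $\phi$ Lipschitz of constant $L$, and locally define the approximation as $\{x_d > (\eta_{\varepsilon_n} \ast \phi)(x') - 2L\varepsilon_n\}$. The mollified $\phi$ is still $L$-Lipschitz, so $\Omega_n$ locally becomes the smooth epigraph of an $L$-Lipschitz function; (d) then follows directly from the uniform cone condition enjoyed by complements of Lipschitz epigraphs, with constants independent of~$n$. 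The local epigraphs are patched into a global $\Omega_n$ via a finite partition of unity, and a final Sard step smooths out patching artefacts.
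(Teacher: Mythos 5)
The paper does not prove this lemma at all: it is dispatched in one line as ``a straightforward consequence of Doktor \cite{Dok} Theorem~5.1'', so what you have written is an attempt to reconstruct the content of that reference. Your second route (mollifying the local Lipschitz graph functions and shifting down by $2L\varepsilon_n$) is indeed essentially Doktor's construction, and it is the right way to get \ref{ldtnc205-3} with constants independent of $n$: a smooth epigraph of an $L$-Lipschitz function inherits the uniform exterior cone condition of $\Omega$ itself. Your treatment of \ref{ldtnc205-1}, \ref{ldtnc205-1.5} and \ref{ldtnc205-2} is fine.

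Two points are genuinely incomplete. First, as you yourself concede, the first construction (superlevel sets of $\eta_{\varepsilon_n}\ast\one_{\Omega^{2\varepsilon_n}}$ at a Sard-regular value) does not deliver \ref{ldtnc205-3} at scales $r\lesssim\varepsilon_n$: Sard gives a regular value but no uniform lower bound on $|\nabla f_n|$ along the level set, and without that the level set can be badly wrinkled at scale $\varepsilon_n$ and the measure-density estimate fails. Since this is the one quantitative, $n$-uniform assertion of the lemma, that route should simply be dropped. Second, in the route you prefer, the sentence ``patched into a global $\Omega_n$ via a finite partition of unity, and a final Sard step smooths out patching artefacts'' hides the actual work: the local epigraphs live in different rotated coordinate frames, and gluing them so that the result is a single open set with globally $C^\infty$ boundary, still containing $\overline\Omega$, still contained in $\Omega+B(0,\frac1n)$, and still satisfying the uniform cone condition, is precisely the delicate part of Doktor's proof (he uses a variable mollification parameter and a transversality argument rather than a na\"{\i}ve partition of unity followed by Sard). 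A ``final Sard step'' reintroduces exactly the non-quantitative regular-value problem you identified in the first route. Either carry out the patching in detail or, as the authors do, cite \cite{Dok}.
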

\begin{proof}
This is a straightforward consequence of \cite{Dok} Theorem~5,1.
\end{proof}

Since $\Omega$ has a Lipschitz boundary, one can extend the coefficients
$a_{kl}$ to bounded real valued Lipschitz continuous functions on $\Ri^d$,
which by abuse of notation we continue to denote by~$a_{kl}$.
Reducing $\mu$ if necessary, we may assume without loss of generality
that (\ref{eSdtnc2;1}) is valid for all $\xi \in \Ci^d$ and $x \in \Ri^d$.
Similarly we extend $V$ to a bounded real valued measurable function on $\Ri^d$, 
still denoted by~$V$.
If $\Omega' \subset \Ri^d$ is open, then we define similarly to $\gota$ 
the form $\gota_{\Omega'} \colon H^1(\Omega') \times H^1(\Omega') \to \Ci$
and define similarly the operators $A_{\Omega'}^D$ and $A_{\Omega'}^N$.
Moreover, define similarly the operator $\ca_{\Omega'} \colon H^1(\Omega') \to H^{-1}(\Omega')$.

If $\Omega',\Omega'' \subset \Ri^d$ are open with $\Omega' \subset \Omega''$,
then we will identify a self-adjoint operator in $L_2(\Omega')$ 
with a self-adjoint graph in $L_2(\Omega'')$ in a natural way,
see \cite{AEKS} Section~3, in particular Proposition~3.3. 
Moreover, we identify an element of $H^1_0(\Omega')$ with an element
in $H^1_0(\Omega'')$ by extending the function with zero.
Then $H^{-1}(\Omega'') \subset H^{-1}(\Omega')$.

\smallskip

If $\Omega_1,\Omega_2,\ldots \subset \Ri^d$ are as in Lemma~\ref{ldtnc205},
then the operators $A^D_{\Omega_n} + V$ in $L_2(\Omega_n)$ are 
a good approximation for the operator $A^D + V$ in $L_2(\Omega)$.
This is the content of the next two lemmas.
The first lemma is not new. 
We include the proof for completeness and refer to Daners \cite{Daners5}
for a systematic investigation of domain approximation.

\begin{lemma} \label{ldtnc211}
Let $\Omega_1,\Omega_2,\ldots \subset \Ri^d$ be open bounded sets
with $\overline \Omega \subset \Omega_{n+1} \subset \Omega_n$ for all 
$n \in \Ni$ and $\bigcap_{n=1}^\infty \overline{\Omega_n} = \overline \Omega$.
Let $\omega \in \Ri$ and suppose that $V + \omega \, \one_{\Omega_1} \geq \one_{\Omega_1}$.
Then
\[
\lim_{n \to \infty} (A^D_{\Omega_n} + V + \omega \, I)^{-1} 
= (A^D_\Omega + V + \omega \, I)^{-1} 
\]
in $\cl(L_2(\Omega_1))$.
\end{lemma}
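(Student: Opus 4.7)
The plan is to use Kato's monotone convergence theorem for increasing sequences of closed forms. Extending functions by zero, identify each $H^1_0(\Omega_n)$ with a closed subspace of $H^1_0(\Omega_1) \subset L_2(\Omega_1)$, and consider the closed positive forms
\[
\gota_n^{V+\omega}(u,u) = \sum_{k,l=1}^d \int_{\Omega_n} a_{kl}\, \partial_k u\, \overline{\partial_l u} + \int_{\Omega_n} (V+\omega)\, |u|^2
\]
with effective domain $H^1_0(\Omega_n)$, extended by $+\infty$ on the rest of $L_2(\Omega_1)$. The assumption $V + \omega \geq 1$ on $\Omega_1$ combined with (\ref{eSdtnc2;1}) yields $\gota_n^{V+\omega}(u,u) \geq c\, \|u\|_{H^1_0(\Omega_1)}^2$ uniformly in $n$, for some $c > 0$. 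Since $H^1_0(\Omega_{n+1}) \subset H^1_0(\Omega_n)$ and the integrands coincide on the smaller space, the sequence $(\gota_n^{V+\omega})_n$ is monotone increasing in the sense of Kato.

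The key geometric step is the identification $\bigcap_n H^1_0(\Omega_n) = H^1_0(\Omega)$. The inclusion $\supset$ is trivial, since $\overline\Omega \subset \Omega_n$. For $\subset$, any $u$ in the intersection, extended by zero to $\Ri^d$, lies in $H^1(\Ri^d)$ with $u = 0$ almost everywhere on $\Omega_n^c$ for every $n$, hence on $\Ri^d \setminus \bigcap_n \Omega_n$. Since $\bigcap_n \Omega_n \subset \bigcap_n \overline{\Omega_n} = \overline\Omega$, the function $u$ vanishes almost everywhere off $\overline\Omega$, and the Lipschitz character of $\Gamma$ then yields $u \in H^1_0(\Omega)$.

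By Kato's monotone convergence theorem for closed forms, the associated self-adjoint operators $B_n := A^D_{\Omega_n} + V + \omega I$ satisfy $B_n^{-1} \to B^{-1}$ in the strong operator topology on $L_2(\Omega_1)$, where $B := A^D_\Omega + V + \omega I$ is the operator of the limit form (whose domain is $H^1_0(\Omega)$ by the above identification). The variational identity
\[
\langle B_n^{-1} f, f \rangle = \sup\bigl\{ 2\RRe(f,u)_{L_2(\Omega_1)} - \gota_n^{V+\omega}(u,u) : u \in H^1_0(\Omega_n)\bigr\},
\]
combined with $H^1_0(\Omega_{n+1}) \subset H^1_0(\Omega_n)$, shows moreover that $(B_n^{-1})_n$ is a decreasing sequence of positive self-adjoint operators. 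The uniform coercivity bound also yields $\|B_n^{-1} f\|_{H^1_0(\Omega_1)} \leq C\, \|f\|_{L_2(\Omega_1)}$ uniformly in $n$, so each $B_n^{-1}$ factors through the compact embedding $H^1_0(\Omega_1) \hookrightarrow L_2(\Omega_1)$ and is therefore compact.

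Hence $(B_n^{-1} - B^{-1})_n$ is a decreasing sequence of positive compact operators converging strongly to zero. A Dini-type argument promotes this to norm convergence: otherwise one could extract unit vectors $f_n$ with $\langle(B_n^{-1}-B^{-1})f_n, f_n\rangle \geq \varepsilon$, pass to a weakly convergent subsequence $f_n \rightharpoonup f$, and use monotonicity together with compactness of $B_m^{-1} - B^{-1}$ for fixed $m$ to deduce $\limsup_n \langle(B_n^{-1}-B^{-1})f_n, f_n\rangle \leq \langle(B_m^{-1}-B^{-1})f, f\rangle$; letting $m \to \infty$ gives a contradiction. The main obstacle is really the geometric identification of the intersection of form domains; once that is secured, the rest is standard monotone form theory with a compactness upgrade.
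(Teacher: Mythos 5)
Your proof is correct, but it follows a genuinely different route from the paper's. The paper argues directly: for an arbitrary weakly convergent sequence $f_n \rightharpoonup f$ in $L_2(\Omega_1)$ it shows, via the uniform coercivity bound and the compact embedding $H^1_0(\Omega_1)\hookrightarrow L_2(\Omega_1)$, that $u_n=(A^D_{\Omega_n}+V+\omega)^{-1}f_n$ converges strongly in $L_2(\Omega_1)$ to a limit $u$ supported in $\overline\Omega$, identifies $u=(A^D_\Omega+V+\omega)^{-1}f$ by passing to the limit in the weak formulation, and then upgrades to operator-norm convergence by a contradiction argument that uses only the compactness of the \emph{limit} resolvent. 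You instead invoke Kato's monotone convergence theorem for the increasing sequence of forms, exploit the resulting operator monotonicity $B_{n+1}^{-1}\leq B_n^{-1}$ (which the paper never uses), and finish with a Dini-type argument requiring compactness of each $B_m^{-1}-B^{-1}$. Both arguments hinge on the same geometric fact --- you prove $\bigcap_n H^1_0(\Omega_n)=H^1_0(\Omega)$ for the whole form-domain intersection, the paper only for the particular weak limit $u$ --- and both use the Lipschitz regularity of $\Gamma$ at exactly that point. Your route buys a cleaner structural picture at the cost of citing the abstract form-convergence theorem; one point you should make explicit is that $H^1_0(\Omega)$ is \emph{not} dense in $L_2(\Omega_1)$, so the version of Kato's theorem you need is the degenerate one in which the strong limit of the resolvents is $(B+\lambda)^{-1}P$ with $P$ the orthogonal projection of $L_2(\Omega_1)$ onto $L_2(\Omega)$; this is precisely the identification of $(A^D_\Omega+V+\omega\, I)^{-1}$ as an operator on $L_2(\Omega_1)$ that the paper adopts (see the reference to \cite{AEKS} Section~3), so the statement you prove is the intended one.
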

\begin{proof}
Without loss of generality we may assume that $V \geq \one_{\Omega_1}$ and 
$\omega = 0$.
Let $f,f_1,f_2,\ldots \in L_2(\Omega_1)$ and suppose that
$\lim f_n = f$ weakly in $L_2(\Omega_1)$.
Let $n \in \Ni$.
Set $u_n = (A^D_{\Omega_n} + V)^{-1} f_n$.
Then $u_n \in H^1_0(\Omega_n) \subset H^1_0(\Omega_1)$.
Moreover, 
\begin{equation}
\gota_{\Omega_1}(u_n,v) + (V \, u_n, v)_{L_2(\Omega_1)} 
= (f_n, v)_{L_2(\Omega_1)}
\label{eldtnc211;1}
\end{equation}
for all $v \in H^1_0(\Omega_n)$.
Choose $v = u_n$.
Then 
\[
\mu \int_{\Omega_1} |\nabla u_n|^2 + \int_{\Omega_1} |u_n|^2
\leq \gota_{\Omega_1}(u_n) + (V \, u_n, u_n)_{L_2(\Omega_1)} 
= (f_n, u_n)_{L_2(\Omega_1)}
\leq \|f_n\|_{L_2(\Omega_1)} \, \|u_n\|_{L_2(\Omega_1)}
.  \]
Hence $\|u_n\|_{L_2(\Omega_1)} \leq \|f_n\|_{L_2(\Omega_1)}$
and $\mu \int_{\Omega_1} |\nabla u_n|^2 \leq \|f_n\|_{L_2(\Omega_1)}^2$.
Since $(f_n)_{n \in \Ni}$ is bounded in $L_2(\Omega_1)$, it follows 
that the sequence $(u_n)_{n \in \Ni}$ is bounded in $H^1_0(\Omega_1)$.
Passing to a subsequence, if necessary, we may assume that there exists a 
$u \in H^1_0(\Omega_1)$ such that 
$\lim u_n = u$ weakly in $H^1_0(\Omega_1)$.
Because $\Omega_1$ is bounded, one then obtains that $\lim u_n = u$ 
(strongly) in $L_2(\Omega_1)$.
Since $\supp u_n \subset \overline{\Omega_m}$ for all $n,m \in \Ni$ with 
$n \geq m$, it follows that $\supp u \subset \overline{\Omega_m}$ for all $m \in \Ni$.
So $\supp u \subset \bigcap_{m=1}^\infty \overline{\Omega_m} = \overline \Omega$.
Hence $u \in H^1_0(\Omega)$ since $\Omega$ has a Lipschitz boundary.
Let $v \in H^1_0(\Omega)$.
Then $v \in H^1_0(\Omega_n)$ for all $n \in \Ni$.
Use (\ref{eldtnc211;1}) and take the limit $n \to \infty$.
Then
\[
\gota(u,v) + (V \, u,v)_{L_2(\Omega)} = (f,v)_{L_2(\Omega)}
.  \]
So $u \in \dom(A^D + V)$ and $(A^D + V) u = f|_\Omega$.
Therefore $u = (A^D + V)^{-1} f$.

Choosing $f_n = f$ for all $n \in \Ni$ we proved that 
$\lim_{n \to \infty} (A^D_{\Omega_n} + V)^{-1} f = (A^D + V)^{-1} f$
in $L_2(\Omega_1)$ for all $f \in L_2(\Omega_1)$.

Finally, suppose that not 
$\lim_{n \to \infty} (A^D_{\Omega_n} + V)^{-1} 
= (A^D_\Omega + V)^{-1}$ 
in $\cl(L_2(\Omega_1))$.
There there are $\varepsilon > 0$ and $f_1,f_2,\ldots \in L_2(\Omega_1)$ 
such that $\|f_n\|_{L_2(\Omega_1)} = 1$ and 
\[
\|(A^D_{\Omega_n} + V)^{-1} f_n 
    - (A^D_\Omega + V)^{-1} f_n\|_{L_2(\Omega_1)} 
\geq \varepsilon
\]
for all $n \in \Ni$.
Passing to a subsequence, if necessary, we may assume that there exists 
an $f \in L_2(\Omega_1)$ such that 
$\lim f_n = f$ weakly in $L_2(\Omega_1)$.
Then $\lim_{n \to \infty} (A^D_{\Omega_n} + V)^{-1} f_n = (A^D + V)^{-1} f$
in $L_2(\Omega_1)$ by the above.
Since $(A^D + V)^{-1}$ is compact, also 
$\lim_{n \to \infty} (A^D_\Omega + V)^{-1} f_n = (A^D + V)^{-1} f$ in $L_2(\Omega_1)$.
So $\lim_{n \to \infty} \|(A^D_{\Omega_n} + V)^{-1} f_n 
    - (A^D_\Omega + V)^{-1} f_n\|_{L_2(\Omega_1)} = 0$.
This is a contradiction.
\end{proof}

\begin{lemma} \label{ldtnc212}
Let $\Omega_1,\Omega_2,\ldots \subset \Ri^d$ be open bounded sets
with $\overline \Omega \subset \Omega_{n+1} \subset \Omega_n$ for all 
$n \in \Ni$ and $\bigcap_{n=1}^\infty \overline{\Omega_n} = \overline \Omega$.
Then there exists a $\delta > 0$ such that 
\[
\sigma(A^D_{\Omega_n} + V) \cap (-\delta,\delta) = \emptyset
\]
for all large $n \in \Ni$.
\end{lemma}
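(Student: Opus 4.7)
The plan is to argue by contradiction, combining condition (\ref{eSdtnc2;2}) with a weak compactness argument in $H^1_0(\Omega_1)$, in the same spirit as the proof of Lemma~\ref{ldtnc211}. Suppose the conclusion fails; then for each $m \in \Ni$ one can find $n_m \geq m$ with $\sigma(A^D_{\Omega_{n_m}} + V) \cap (-\frac{1}{m}, \frac{1}{m}) \neq \emptyset$. Each operator $A^D_{\Omega_{n_m}} + V$ is self-adjoint, bounded below, and has compact resolvent (by Rellich--Kondrachov on $H^1_0(\Omega_{n_m})$), so every spectral value is an eigenvalue. Thus there exist $\lambda_m \in (-\frac{1}{m}, \frac{1}{m})$ and eigenfunctions $u_m \in H^1_0(\Omega_{n_m})$ with $\|u_m\|_{L_2(\Omega_{n_m})} = 1$ and $(A^D_{\Omega_{n_m}} + V) u_m = \lambda_m u_m$. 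In particular $\lambda_m \to 0$.

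Extending each $u_m$ by zero, I would view it as an element of $H^1_0(\Omega_1)$ with unit $L_2$-norm. Testing the weak equation against $u_m$ itself and using ellipticity (\ref{eSdtnc2;1}) yields $\mu \, \|\nabla u_m\|_{L_2(\Omega_1)}^2 \leq \lambda_m + \|V\|_\infty$, so $(u_m)$ is bounded in $H^1_0(\Omega_1)$. After passing to a subsequence, $u_m$ converges weakly in $H^1_0(\Omega_1)$ and strongly in $L_2(\Omega_1)$ (Rellich--Kondrachov) to some $u$ with $\|u\|_{L_2(\Omega_1)} = 1$. Since $\supp u_m \subset \overline{\Omega_{n_m}} \subset \overline{\Omega_k}$ whenever $n_m \geq k$, the limit satisfies $\supp u \subset \bigcap_k \overline{\Omega_k} = \overline{\Omega}$; because $\Omega$ has a Lipschitz boundary, this forces $u \in H^1_0(\Omega)$.

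Finally, for any $v \in H^1_0(\Omega)$ extended by zero we have $v \in H^1_0(\Omega_{n_m})$ for all $m$, so one can pass to the limit in $\gota_{\Omega_{n_m}}(u_m, v) + \int V u_m \overline v = \lambda_m \int u_m \overline v$ (the form term via weak $H^1$-convergence, the lower order terms via strong $L_2$-convergence and $\lambda_m \to 0$) to conclude $\gota(u,v) + \int_\Omega V u \overline v = 0$ for all $v \in H^1_0(\Omega)$. Hence $u \in \dom(A^D + V)$ with $(A^D + V) u = 0$ and $\|u\|_{L_2(\Omega)} = 1$, contradicting (\ref{eSdtnc2;2}). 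The main obstacle will be identifying the weak $H^1$-limit as an element of $H^1_0(\Omega)$ rather than merely of $H^1_0(\Omega_1)$; this is the one point where the Lipschitz regularity of $\Gamma$ enters, via the standard fact that an $H^1(\Omega_1)$-function supported in $\overline\Omega$ lies in $H^1_0(\Omega)$. The remaining arguments are routine spectral compactness, parallel to those already carried out in Lemma~\ref{ldtnc211}.
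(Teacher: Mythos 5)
Your argument is correct, but it takes a different route than the paper. The paper's proof of Lemma~\ref{ldtnc212} invokes Lemma~\ref{ldtnc211} (operator-norm convergence of the resolvents $(A^D_{\Omega_n}+V+\omega I)^{-1}$) and then appeals to the resulting convergence of the individual eigenvalues $\lambda^{(n)}_m \to \lambda_m$, combined with the spectral gap of $A^D + V$ at $0$, to conclude directly. You instead argue by contradiction: extract a sequence $n_m \to \infty$ with eigenvalues $\lambda_m \to 0$ and normalized eigenfunctions $u_m \in H^1_0(\Omega_{n_m})$, use the energy estimate and Rellich--Kondrachov to pass to a limit $u$ with $\|u\|_{L_2}=1$, identify the support via $\bigcap_k \overline{\Omega_k} = \overline\Omega$ and the Lipschitz boundary to get $u \in H^1_0(\Omega)$, and pass to the limit in the weak equation to produce $0 \in \sigma(A^D + V)$. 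This re-performs the same compactness and support arguments that the paper packages inside Lemma~\ref{ldtnc211}, but it avoids citing the eigenvalue-convergence result and avoids the slightly delicate point (left implicit in the paper) that pointwise convergence of $\lambda^{(n)}_m$ to $\lambda_m$ together with domain monotonicity uniformly clears the interval $(-\delta,\delta)$. In that sense your version is more self-contained; the paper's version is more economical given that Lemma~\ref{ldtnc211} has already been proved. One cosmetic remark: in the energy estimate you should write $|\lambda_m| + \|V\|_\infty$ (or simply note $\lambda_m \in (-1,1)$), not $\lambda_m + \|V\|_\infty$, since $\lambda_m$ may be negative; this does not affect the argument.
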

\begin{proof}
For all $n \in \Ni$ the self-adjoint operators $A^D_{\Omega_n} + V$
and $A^D + V$ are lower bounded by $- \|V\|_{L_\infty(\Omega_1)}$
and have compact resolvent.
Hence they have a discrete spectrum.
Let $n \in \Ni$.
For all $m \in \Ni$ let $\lambda^{(n)}_m$ be the $m$-th eigenvalue 
of $A^D_{\Omega_n} + V$, counted with multiplicity.
Define similarly $\lambda_m$ with respect to $A^D + V$.
Since $\lim_{n \to \infty} (A^D_{\Omega_n} + V + \omega \, I)^{-1} 
= (A^D_\Omega + V + \omega \, I)^{-1}$ in $\cl(L_2(\Omega_1))$ with 
$\omega = \|V\|_{L_\infty(\Omega_1)} + 1$ by Lemma~\ref{ldtnc211}, it 
follows that $\lim_{n \to \infty} \lambda^{(n)}_m = \lambda_m$
for all $m \in \Ni$.
For a short proof of this well known fact see \cite{EM1}.

By assumption $0 \not\in \sigma(A^D + V)$.
Hence there exists a $\delta > 0$ such that 
$\sigma(A^D_\Omega + V) \cap (-\delta,\delta) = \emptyset$.
Since the eigenvalues converge, then also 
$\sigma(A^D_{\Omega_n} + V) \cap (-\delta,\delta) = \emptyset$ for 
all large $n \in \Ni$.
\end{proof}

The next lemma is a small extension of a special case of Theorem~1.2
in \cite{ERe2}.

\begin{lemma} \label{ldtnc209}
For all $c,d,\mu,M > 0$ and $p \in (\frac{d}{2} \vee 2,\infty)$ there exist $\alpha \in (0,1)$
and $c_1 > 0$ such that the following is valid.

Let $\Omega \subset \Ri^d$ be open non-empty and suppose that 
$|B(z,r) \setminus \Omega| \geq c \, r^d$ for all $z \in \partial \Omega$ and 
$r \in (0,1]$.
Let $V \in L_\infty(\Omega)$ with $\|V\|_{L_\infty(\Omega)} \leq M$.
For all $k,l \in \{ 1,\ldots,d \} $ let $a_{kl} \in L_\infty(\Omega,\Ri)$
with $\|a_{kl}\|_{L_\infty(\Omega)} \leq M$ and suppose that 
$\RRe \sum_{k,l=1}^d a_{kl}(x) \, \xi_k \, \overline{\xi_l} \geq \mu \, |\xi|^2$
for almost all $x \in \Omega$ and all $\xi \in \Ci^d$.
Let $f \in L_p(\Omega) \cap L_2(\Omega)$
and $u \in H^1_0(\Omega)$.
Suppose that 
\[
\sum_{k,l=1}^d \int_\Omega a_{kl} \, (\partial_k u) \, \overline{\partial_l v}
   + \int_\Omega V \, u \, \overline v
= \int_\Omega f \, \overline v
\]
for all $v \in H^1_0(\Omega)$.
Then $u \in C^\alpha(\Omega)$ and 
\[
|||u|||_{C^\alpha(\Omega)}
\leq c_1 \Big( \|u\|_{H^1(\Omega)} + \|f\|_{L_p(\Omega)} \Big)
,  \]
where 
\begin{equation}
|||u|||_{C^\alpha(\Omega)} 
= \sup \Big\{ \frac{|u(x) - u(y)|}{|x-y|^\alpha} : x,y \in \Omega 
   \mbox{ and } 0 < |x-y| \leq 1 \Big\} 
.  \
\label{eldtnc209;1}
\end{equation}
\end{lemma}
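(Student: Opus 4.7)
The plan is to derive this lemma from the cited \cite{ERe2} Theorem~1.2, which presumably treats the same equation without the zero-order potential. The ``small extension'' thus consists of incorporating the bounded measurable term $V$. Setting $\tilde f := f - Vu$, the hypothesis becomes
\[
\sum_{k,l=1}^d \int_\Omega a_{kl} \, (\partial_k u) \, \overline{\partial_l v} = \int_\Omega \tilde f \, \overline v
\]
for all $v \in H^1_0(\Omega)$, which is the potential-free equation. Applying the cited theorem to this rewritten equation yields the desired H\"older estimate, provided $\|\tilde f\|_{L_p(\Omega)}$ is controlled by a constant (depending only on $c,d,\mu,M,p$) times $\|u\|_{H^1(\Omega)} + \|f\|_{L_p(\Omega)}$.

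Since $\|V\|_\infty \leq M$, the task reduces to an a priori bound of the form $\|u\|_{L_\infty(\Omega)} \leq C_0 \bigl(\|u\|_{H^1(\Omega)} + \|f\|_{L_p(\Omega)}\bigr)$ with $C_0$ depending only on $c,d,\mu,M,p$; this is strictly stronger than an $L_p$ bound but is what the argument produces naturally. I would establish it by a De Giorgi / Stampacchia truncation: extend $u$ by zero to $\Ri^d$ (valid because $u \in H^1_0(\Omega)$), test the weak equation against the truncations $(u - k)_+$ and $-(u+k)_-$ for $k \geq 0$, exploit the ellipticity (\ref{eSdtnc2;1}) together with Sobolev's inequality on $\Ri^d$, and iterate on the superlevel sets $\{|u| > k\}$. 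The hypothesis $p > d/2$ is precisely what makes the resulting nonlinear level-set recursion converge; the contribution of the potential $Vu$ is absorbed into the iteration by interpolating $\|u\|_{L_p}$ between $L_2$ and $L_\infty$ and applying Young's inequality.

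The main obstacle is that all constants in the truncation argument must depend only on $c,d,\mu,M,p$ and not on $\Omega$ itself. This is where the exterior measure density hypothesis $|B(z,r) \setminus \Omega| \geq c\,r^d$ enters: after zero-extension, it yields a uniform Sobolev/Poincar\'e inequality on balls centred at boundary points of $\Omega$, with constant depending only on $c$ and $d$. Combined with the interior Caccioppoli inequality, this closes the iteration and delivers the $L_\infty$ bound. The cited theorem then applies directly to the potential-free equation with right-hand side $\tilde f$, and gives (\ref{eldtnc209;1}) after collecting constants.
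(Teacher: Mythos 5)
Your proposal is correct in its starting move---rewriting the equation with $\tilde f := f - Vu$ so that the cited Theorem~1.2 of ter Elst--Rehberg applies, and then closing the loop by controlling $\|\tilde f\|_{L_p}$---which is exactly the paper's idea. The difference lies in how the integrability of $u$ is boosted: the paper simply iterates Proposition~3.2 of \cite{ERe2} (an integrability-gain lemma for the potential-free equation) until the exponent clears $p$, whereas you reprove an $L_\infty$ a priori estimate from scratch by De Giorgi--Stampacchia truncation and then absorb the $Vu$ term by interpolating $\|u\|_{L_p}$ between $L_2$ and $L_\infty$ and applying Young. Both routes are sound and need $p>d/2$ for the level-set recursion (or the bootstrap) to terminate; yours is more self-contained but longer, the paper's is shorter by leaning on the cited black box. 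One small inaccuracy worth flagging: you locate the role of the exterior measure-density hypothesis $|B(z,r)\setminus\Omega|\ge c\,r^d$ inside your $L_\infty$ iteration, arguing it gives uniform Poincar\'e constants on boundary balls. In fact, for $u\in H^1_0(\Omega)$ the zero extension together with the \emph{global} Sobolev inequality on $\Ri^d$ already makes the Stampacchia iteration uniform, so the density condition is not needed there; it is needed in the application of \cite{ERe2} Theorem~1.2, which is where H\"older continuity \emph{up to the boundary} with a uniform constant is extracted. This does not affect the validity of your argument, only the bookkeeping of where each hypothesis is used.
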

\begin{proof}
If $V = 0$, then this is a special case of \cite{ERe2} Theorem~1.2
with the choice $\Gamma = \emptyset$, $\Upsilon = \Omega$
and $\zeta = 2$.
If $V \neq 0$, then one has to replace $f$ by $f - V \, u$ and iterate,
using Proposition~3.2 of \cite{ERe2}.
\end{proof}

Now we are able to prove that one can
approximate elements in $C(\Gamma)$ by elements 
$\varphi \in \dom(D_V) \cap C(\Gamma)$ with $D_V \varphi \in L_\infty(\Gamma)$.

\begin{lemma} \label{ldtnc206}
Let $\varphi \in C(\Gamma)$ and $\varepsilon > 0$.
Then there exists a $u \in C^1(\overline \Omega) \cap H^2(\Omega)$
such that $(\ca + V) u = 0$
and $\|u|_\Gamma - \varphi\|_{C(\Gamma)} < \varepsilon$.
\end{lemma}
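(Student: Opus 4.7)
The plan is to realise $\varphi$ approximately as the boundary trace of a solution of $(\ca + V) u = 0$ on a slightly larger smooth domain and then restrict back to $\overline\Omega$. First I would extend $\varphi$ to a bounded continuous function on $\Ri^d$ (Tietze) and mollify to produce $\tilde\varphi \in C^\infty(\Ri^d) \cap C_b(\Ri^d)$ with $\|\tilde\varphi|_\Gamma - \varphi\|_{C(\Gamma)} < \varepsilon/2$. Next, using Lemma~\ref{ldtnc205}, fix smooth bounded domains $\Omega_n \supset \overline\Omega$ shrinking down to $\overline\Omega$; these satisfy the exterior density condition \ref{ldtnc205-3} and the one-sided approximation \ref{ldtnc205-2}, both with constants independent of~$n$.

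For each large $n$, Lemma~\ref{ldtnc212} ensures $0 \notin \sigma(A^D_{\Omega_n} + V)$, so I can define $w_n \in H^1_0(\Omega_n)$ uniquely by $(A^D_{\Omega_n} + V) w_n = -(\ca_{\Omega_n} + V)\tilde\varphi$ and set $u_n := w_n + \tilde\varphi$. Then $u_n \in H^1(\Omega_n)$ weakly satisfies $(\ca_{\Omega_n} + V) u_n = 0$ on $\Omega_n$, so its restriction obeys $(\ca + V)(u_n|_\Omega) = 0$ on $\Omega$. Since $\tilde\varphi$ is fixed and smooth, $f := -(\ca + V)\tilde\varphi$ lies in $L_\infty(\Ri^d)$ with an $n$-independent bound. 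Combining Lemmas~\ref{ldtnc211} and~\ref{ldtnc212}, the inverses $(A^D_{\Omega_n} + V)^{-1}$ are uniformly bounded on $L_2(\Omega_1)$, and the standard energy inequality for $\gota_{\Omega_n}$ then gives $\|w_n\|_{H^1(\Omega_n)} \leq C$ uniformly in~$n$.

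The heart of the argument is to show $\|u_n|_\Gamma - \tilde\varphi|_\Gamma\|_{C(\Gamma)} \to 0$. Applying Lemma~\ref{ldtnc209} on $\Omega_n$ (whose exterior density constant is $n$-independent by Lemma~\ref{ldtnc205}\ref{ldtnc205-3}) delivers $w_n \in C^\alpha(\Omega_n)$ with a seminorm bounded uniformly in~$n$. Since $w_n \in H^1_0(\Omega_n)$ and $\partial\Omega_n$ is smooth, $w_n$ extends continuously to $\overline{\Omega_n}$ with zero boundary values (the continuous extension agrees a.e.\ on $\partial\Omega_n$ with the $L_2$-trace, hence vanishes everywhere by continuity). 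For $z \in \Gamma$, Lemma~\ref{ldtnc205}\ref{ldtnc205-2} yields $z' \notin \Omega_n$ with $|z - z'| \leq c_1/n$; as $z \in \Omega_n$ the segment $[z,z']$ meets $\partial\Omega_n$ at some $z''$ with $|z - z''| \leq c_1/n$, whence
\[
|w_n(z)| = |w_n(z) - w_n(z'')| \leq |||w_n|||_{C^\alpha(\overline{\Omega_n})} \, (c_1/n)^\alpha \longrightarrow 0
\]
uniformly in $z \in \Gamma$. For large $n$ this gives $\|u_n|_\Gamma - \varphi\|_{C(\Gamma)} < \varepsilon$.

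Finally, to obtain $u_n \in C^1(\overline\Omega) \cap H^2(\Omega)$, I would use that $\overline\Omega$ is compactly contained in the open set $\Omega_n$; the Lipschitz hypothesis on the $a_{kl}$ then yields interior $W^{2,p}_{\rm loc}(\Omega_n)$ estimates for every $p < \infty$ (rewriting the equation in non-divergence form and applying Calder\'on--Zygmund), and taking $p > d$ combined with Sobolev embedding gives $C^{1,\alpha'}$ regularity on a neighbourhood of $\overline\Omega$. The principal technical obstacle is the uniformity in $n$ of the H\"older seminorm in Lemma~\ref{ldtnc209}: this rests on the uniform exterior measure constant $c_2$ from Lemma~\ref{ldtnc205}\ref{ldtnc205-3}, the uniform spectral gap of Lemma~\ref{ldtnc212}, and the resulting uniform bound on $\|w_n\|_{H^1(\Omega_n)}$.
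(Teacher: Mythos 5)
Your proposal follows the same route as the paper: reduce to a smooth approximant of $\varphi$ (you via Tietze plus mollification, the paper via Stone--Weierstra\ss\ with $F\in C^2(\Ri^d)$), correct it by solving the Dirichlet problem $(A^D_{\Omega_n}+V)w_n=-(\ca+V)\tilde\varphi$ on the exterior smooth approximating domains of Lemma~\ref{ldtnc205}, use the uniform spectral gap of Lemma~\ref{ldtnc212} and the energy estimate to bound $\|w_n\|_{H^1(\Omega_n)}$ uniformly, feed this into the uniform H\"older estimate of Lemma~\ref{ldtnc209} so that $w_n|_\Gamma\to 0$ uniformly via Lemma~\ref{ldtnc205}\ref{ldtnc205-2}, and finish with interior $W^{2,p}_{\rm loc}$ regularity on $\Omega_n\supset\!\supset\overline\Omega$. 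This is essentially the paper's proof and is correct.
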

\begin{proof}
Since $ \{ F|_\Gamma : F \in C^2(\Ri^d) \} $ is dense in $C(\Gamma)$
by the Stone--Weierstra\ss\ theorem, we may assume that there 
exists an $F \in C^2(\Ri^d)$ such that $\varphi = F|_\Gamma$.

Let $c_1,c_2 > 0$ and $\Omega_1,\Omega_2,\ldots \subset \Ri^d$ be as in 
Lemma~\ref{ldtnc205}.
By Lemma~\ref{ldtnc212} there exists a $\delta > 0$ such that 
$\sigma(A^D_{\Omega_n} + V) \cap (-\delta,\delta) = \emptyset$ 
for all large $n \in \Ni$.
Without loss of generality we may assume that 
$\sigma(A^D_{\Omega_n} + V) \cap (-\delta,\delta) = \emptyset$ 
for all $n \in \Ni$.
Then in particular $A^D_{\Omega_n} + V$ is invertible for all $n \in \Ni$.
Let $n \in \Ni$.
Define $G_n \in L_2(\Omega_n)$ by 
\[
G_n = - \sum_{k,l=1}^d \partial_l \, a_{kl} \, \partial_k(F|_{\Omega_n})
.  \]
Since $F \in C^2(\Ri^d)$ and $a_{kl} \in W^{1,\infty}(\Ri^d)$ one 
indeed obtains that $G_n \in L_2(\Omega_n)$.
Even stronger, $G_n \in L_{d+1}(\Omega_n)$.
Since $A^D_{\Omega_n} + V$ is invertible, we can define
\[
w_n = (A^D_{\Omega_n} + V)^{-1} (G_n + V \, F)
.  \]
Then $w_n \in H^1_0(\Omega_n) \cap C_0(\Omega_n)$, where the continuity 
follows for example from Lemma~\ref{ldtnc209}.
Moreover, 
\[
\gota_{\Omega_n}(w_n,v) + \int_{\Omega_n} V \, w_n \, \overline v
= \int_{\Omega_n} (G_n + V \, F) \, \overline v
=  \gota_{\Omega_n}(F|_{\Omega_n},v) + \int_{\Omega_n} V \, F \, \overline v
\]
for all $v \in H^1_0(\Omega_n)$.
Let $u_n = F|_{\Omega_n} - w_n$.
Then $(\ca_{\Omega_n} + V) u_n = 0$.
So $\ca_{\Omega_n} u_n = - V \, u_n$ and hence 
$u_n \in W^{2,p}_\loc(\Omega_n)$ for all $p \in [1,\infty)$
by elliptic regularity.
In particular $u_n|_\Omega \in C^1(\overline \Omega) \cap H^2(\Omega)$.
Note that $u_n - F|_{\Omega_n} = - w_n$.
By Lemmas~\ref{ldtnc209} and \ref{ldtnc205}\ref{ldtnc205-3}
there exist $\alpha \in (0,1)$ and $c_3 > 0$,
{\em independent} of $n$, 
such that 
\begin{equation}
|||w_n|||_{C^\alpha(\Omega_n)}
\leq c_3 \Big( \|G_n + V \, F\|_{L_{d+1}(\Omega_n)}
             + \|w_n\|_{H^1(\Omega_n)} 
       \Big)
\label{eldtnc206;2}
\end{equation}
for all $n \in \Ni$, where $|||w_n|||_{C^\alpha(\Omega_n)}$ is defined
as in (\ref{eldtnc209;1}).
Clearly $\|G_n + V \, F\|_{L_{d+1}(\Omega_n)} \leq \|G_1 + V \, F\|_{L_{d+1}(\Omega_1)}$
for all $n \in \Ni$.
We next show that $(\|w_n\|_{H^1(\Omega_n)})_{n \in \Ni}$ is bounded.

Let $n \in \Ni$.
Since $\sigma(A^D_{\Omega_n} + V) \cap (-\delta,\delta) = \emptyset$
it follows that $\|(A^D_{\Omega_n} + V)^{-1}\| \leq \delta^{-1}$.
Therefore
\begin{equation}
\|w_n\|_{L_2(\Omega_n)} 
\leq \|(A^D_{\Omega_n} + V)^{-1}\| \, \|G_n + V \, F\|_{L_2(\Omega_n)} 
\leq \tfrac{1}{\delta} \, \|G_1 + V \, F\|_{L_2(\Omega_1)} 
.  
\label{eldtnc206;3}
\end{equation}
Set $\omega = \|V\|_{L_\infty(G_1)}$.
Then 
\begin{eqnarray*}
\mu \int_{\Omega_n} |\nabla w_n|^2
& \leq & \gota_{\Omega_n}(w_n)
\leq \gota_{\Omega_n}(w_n) + \int_{\Omega_n} (V + \omega \, \one_{\Omega_n}) \, |w_n|^2  \\
& = & \int_{\Omega_n} (G_n + V \, F + \omega \, w_n) \, \overline{w_n}  \\
& \leq & \Big( \|G_1 + V \, F\|_{L_2(\Omega_1)} + \omega \, \|w_n\|_{L_2(G_n)} \Big)
    \, \|w_n\|_{L_2(G_n)}
.  
\end{eqnarray*}
Together with (\ref{eldtnc206;3}) one concludes
the sequence $(\|w_n\|_{H^1(\Omega_n)})_{n \in \Ni}$ is bounded.

Using (\ref{eldtnc206;2}) there exists a $c_4 > 0$ such that 
$|||w_n|||_{C^\alpha(\Omega_n)} \leq c_4$ uniformly for all $n \in \Ni$.
Now let $z \in \Gamma$.
By Lemma~\ref{ldtnc205}\ref{ldtnc205-2} there exists a $z' \in \Omega_n^c$ such that 
$|z-z'| \leq \frac{c_1}{n}$.
Hence if $n \geq c_1$, then 
$|w_n(z)| 
= |w_n(z) - w_n(z')|
\leq |||w_n|||_{C^\alpha(\Omega_n)} \, |z-z'|^\alpha
\leq c_4 \, c_1^\alpha \, n^{-\alpha}$.
Therefore $\lim_{n \to \infty} \|w_n|_\Gamma\|_{C(\Gamma)} = 0$.
Hence $\lim_{n \to \infty} \|u_n|_\Gamma - F|_\Gamma\|_{C(\Gamma)} = 0$.
So choose $u = u_n|_{\overline \Omega}$ with $n$ large enough.
\end{proof}

We need one more lemma before we can prove density 
of $\dom(D_{V,c})$ in $C(\Gamma)$.
The main aim in the lemma is to solve the Neumann problem with respect to 
$A^N + V$ for functions $\psi \in L_p(\Gamma)$. 
If $p$ is large enough then solutions are continuous on $\overline \Omega$.
We choose $p = d$.
As expected, the kernel of of $A^N + V$ gives problems, so we 
take orthogonal complements.

\begin{lemma} \label{ldtnc208}
Define 
\[
H^1_{V\perp}(\Omega)
= \{ u \in H^1(\Omega) : (u,v)_{H^1(\Omega)} = 0 
     \mbox{ for all } v \in \ker(A^N + V) \}
\]
and 
\[
L_{d,V\perp}(\Gamma)
= \{ \tau \in L_d(\Gamma) : (\tau,\tr v)_{L_2(\Gamma)} = 0 
     \mbox{ for all } v \in \ker(A^N + V) \}
.  \]
Then one has the following.
\begin{tabel}
\item \label{ldtnc208-1}
$\ker(A^N + V) \subset C(\overline \Omega)$ is finite dimensional.
\item \label{ldtnc208-7}
If $u \in \ker(A^N + V)$, then $\tr u \in \dom(D_{V,c})$.
\item \label{ldtnc208-1.5}
If $\tau \in L_{d,V\perp}(\Gamma)$ and $\varepsilon > 0$, then there 
exists a $\tau' \in C(\Gamma) \cap L_{d,V\perp}(\Gamma)$ such that 
$\|\tau - \tau'\|_{L_d(\Gamma)} < \varepsilon$.
\item \label{ldtnc208-2}
For all $\tau \in L_{d,V\perp}(\Gamma)$ there exists a unique 
$u \in H^1_{V\perp}(\Omega)$ such that 
$\gota_V(u,v) = \int_\Gamma \tau \, \overline{\tr v}$
for all $v \in H^1(\Omega)$.
\end{tabel}
Define $E \colon L_{d,V\perp}(\Gamma) \to H^1_{V\perp}(\Omega)$ such 
that 
\[
\gota_V(E \tau, v)
= \int_\Gamma \tau \, \overline{\tr v}
\]
for all $v \in H^1(\Omega)$.
\begin{tabel}
\setcounter{teller}{4}
\item \label{ldtnc208-3}
The map $E$ is continuous.
\item \label{ldtnc208-4}
If $\tau \in L_{d,V\perp}(\Gamma)$, then $E \tau \in C(\overline \Omega)$.
\item \label{ldtnc208-5}
The map $E$ is continuous from $L_{d,V\perp}(\Gamma)$ into $C(\overline \Omega)$.
\item \label{ldtnc208-6}
If $\tau \in L_{d,V\perp}(\Gamma)$, then 
$\tr E \tau \in \dom(D_V)$ and $D_V \tr E \tau = \tau$.
\end{tabel}
\end{lemma}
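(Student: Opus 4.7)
My plan is to establish the seven statements in order, using repeatedly (a) the Fredholm structure of the self-adjoint operator $A^N + V$ with compact resolvent, and (b) elliptic regularity up to the boundary of the Nittka type (as already invoked in the proof of Lemma~\ref{ldtnc231}). For~\ref{ldtnc208-1}, the operator $A^N + V$ is self-adjoint with compact resolvent on $L_2(\Omega)$, so $\ker(A^N+V)$ is finite dimensional; every $u$ in this kernel satisfies $(\ca + V)u = 0$ weakly on $\Omega$ with vanishing conormal derivative and right-hand side $-Vu \in L_\infty(\Omega)$, so Nittka's regularity result \cite{Nit4}~Theorem~3.14 (in its Neumann version) yields $u \in C(\overline\Omega)$. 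Statement~\ref{ldtnc208-7} is then immediate from Lemma~\ref{ldtnc230}: $\tr u \in C(\Gamma)$, $(\ca+V)u = 0$, and $\partial_\nu u = 0$, so $\tr u \in \dom(D_V) \cap C(\Gamma)$ with $D_V \tr u = 0 \in C(\Gamma)$, hence $\tr u \in \dom(D_{V,c})$.

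For~\ref{ldtnc208-1.5}, set $W := \spann \{\tr u : u \in \ker(A^N+V)\} \subset C(\Gamma)$, a finite dimensional subspace by~\ref{ldtnc208-1}. Given $\tau \in L_{d,V\perp}(\Gamma)$, first approximate it by some $\sigma \in C(\Gamma)$ in the $L_d$-norm (using density of $C(\Gamma)$ in $L_d(\Gamma)$), then set $\tau' := \sigma - P\sigma$ where $P$ denotes the $L_2(\Gamma)$-orthogonal projection onto $W$. Since $W \subset C(\Gamma)$ one has $\tau' \in C(\Gamma)$, and by construction $\tau' \perp W$, so $\tau' \in L_{d,V\perp}(\Gamma)$. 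The error is controlled because $P\tau = 0$ gives $P\sigma = P(\sigma - \tau)$; using $L_d(\Gamma) \hookrightarrow L_2(\Gamma)$ (finite measure) and the equivalence of the $L_2$- and $L_d$-norms on the finite dimensional space $W$, one obtains $\|\tau - \tau'\|_{L_d(\Gamma)} \leq C \|\sigma - \tau\|_{L_d(\Gamma)}$, which can be made smaller than~$\varepsilon$. For~\ref{ldtnc208-2}, apply the Fredholm alternative: $\gota_V$ induces a bounded self-adjoint map $H^1(\Omega) \to H^1(\Omega)^*$ whose kernel is $\ker(A^N+V)$ and whose range equals the annihilator of that kernel; its restriction to $H^1_{V\perp}(\Omega)$ is an isomorphism onto this annihilator. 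The functional $v \mapsto \int_\Gamma \tau \overline{\tr v}$ is bounded on $H^1(\Omega)$ (by the trace inequality and $L_d \hookrightarrow L_2$), and it annihilates $\ker(A^N+V)$ exactly when $\tau \in L_{d,V\perp}(\Gamma)$; this gives existence and uniqueness of $u \in H^1_{V\perp}(\Omega)$.

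Statements~\ref{ldtnc208-3}, \ref{ldtnc208-4}, \ref{ldtnc208-5} combine Fredholm continuity with the same elliptic regularity used in~\ref{ldtnc208-1}. Continuity of $E$ in~\ref{ldtnc208-3} is a direct consequence of~\ref{ldtnc208-2} via the closed graph theorem (or the explicit quantitative bound in the Fredholm argument). For~\ref{ldtnc208-4}, the function $u = E\tau$ satisfies $(\ca + V)u = 0$ weakly on $\Omega$ with weak conormal derivative $\partial_\nu u = \tau \in L_d(\Gamma)$, and since the integrability exponent $d$ exceeds the critical value $d-1$, Nittka's Theorem~3.14 in~\cite{Nit4} applies to yield $u \in C(\overline\Omega)$. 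The continuity statement~\ref{ldtnc208-5} follows either from the quantitative form of the Nittka estimate or, more cheaply, from the closed graph theorem applied to $E \colon L_{d,V\perp}(\Gamma) \to C(\overline\Omega)$ combined with~\ref{ldtnc208-3} and the continuous embedding $C(\overline\Omega) \hookrightarrow L_2(\Omega)$. Finally,~\ref{ldtnc208-6} is immediate: taking $v \in H^1_0(\Omega)$ in the defining identity for $E\tau$ shows $(\ca+V)E\tau = 0$, the full identity then identifies the weak conormal derivative of $E\tau$ as $\tau$, and Lemma~\ref{ldtnc230} yields $\tr E\tau \in \dom(D_V)$ with $D_V \tr E\tau = \tau$. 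The main analytic obstacle throughout is the continuity up to the boundary in~\ref{ldtnc208-1} and~\ref{ldtnc208-4}: these hinge on the boundary regularity of \cite{Nit4} for $L_p$-data above the critical exponent, and all remaining parts are routine consequences of Fredholm theory combined with the definitions.
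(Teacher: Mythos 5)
Your proposal is correct and follows essentially the same route as the paper: finite dimensionality and continuity of $\ker(A^N+V)$ from the compact resolvent plus Nittka's Theorem~3.14, the projection onto $\tr\ker(A^N+V)$ with H\"older/finite-dimensional norm equivalence for the approximation step, a Fredholm-type solvability argument for the Neumann problem on the orthogonal complement (the paper packages this as invertibility of the representing operator $T$ on $H^1_{V\perp}(\Omega)$ via compactness of $H^1\hookrightarrow L_2$, which is the same mechanism as your closed-range/annihilator formulation), and quantitative elliptic estimates or closed-graph arguments for the continuity of $E$.
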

\begin{proof}
`\ref{ldtnc208-1}'.
The operator $A^N + V$ has compact resolvent. 
Hence its kernel is finite dimensional. 
The inclusion follows from \cite{Nit4} Theorem~3.14(ii).

`\ref{ldtnc208-7}'.
If $v \in H^1(\Omega)$, then $\gota_V(u,v) = ((A^N + V) u, v)_{L_2(\Omega)} = 0$.
Therefore $\tr u \in \dom(D_V)$ and $D_V \tr u = 0$.
Since $\tr u \in C(\Gamma)$ by Statement~\ref{ldtnc208-1} and obviously 
the zero function is continuous, one deduces that $\tr u \in \dom(D_{V,c})$.

`\ref{ldtnc208-1.5}'.
By Statement~\ref{ldtnc208-1} there exist $N \in \Ni_0$ and 
$\varphi_1,\ldots,\varphi_N \in \tr \ker(A^N + V)$ such that 
$\varphi_1,\ldots,\varphi_N$ is a basis for $\tr \ker(A^N + V)$.
We may assume without loss of generality that 
$\varphi_1,\ldots,\varphi_N$ is orthonormal in $L_2(\Gamma)$.
Since $C(\Gamma)$ is dense in $L_d(\Gamma)$ there exists a $\tau'' \in C(\Gamma)$
such that $\|\tau - \tau''\|_{L_d(\Gamma)} < \varepsilon$.
For all $k \in \{ 1,\ldots,N \} $ set 
$c_k = (\tau'', \varphi_k)_{L_2(\Gamma)}$.
Then 
$|c_k| 
= |(\tau'' - \tau, \varphi_k)_{L_2(\Gamma)}|
\leq \varepsilon \, \|\varphi_k\|_{L_p(\Gamma)}$,
where $p \in (1,\infty)$ is the dual exponent of~$d$.
Set $\tau' = \tau'' - \sum_{k=1}^N c_k \, \varphi_k$.
Then 
\[
\|\tau - \tau'\|_{L_d(\Gamma)}
\leq \|\tau - \tau''\|_{L_d(\Gamma)} 
   + \sum_{k=1}^N |c_k| \, \|\varphi_k\|_{L_d(\Gamma)}
\leq \Big( 1 + \sum_{k=1}^N \|\varphi_k\|_{L_d(\Gamma)} \, \|\varphi_k\|_{L_p(\Gamma)} 
     \Big) \varepsilon
\]
and $\tau' \in C(\Gamma) \cap L_{d,V\perp}(\Gamma)$.

`\ref{ldtnc208-2}'.
Define the form $\gotb \colon H^1_{V\perp}(\Omega) \times H^1_{V\perp}(\Omega) \to \Ci$
by $\gotb = \gota_V|_{H^1_{V\perp}(\Omega) \times H^1_{V\perp}(\Omega)}$.
Then $\gotb$ is a continuous symmetric sesquilinear form.
Hence there exists a $T \in \cl(H^1_{V\perp}(\Omega))$ such that 
$\gotb(u,v) = (T u, v)_{H^1_{V\perp}(\Omega)}$ for all $u,v \in H^1_{V\perp}(\Omega)$.

We next show that $T$ is injective.
Indeed, if $u \in \ker T$, then
$\gota_V(u,v) = 0$ for all $v \in H^1_{V\perp}(\Omega)$.
Obviously $\gota_V(u,v) = (u, (A^N + V) v)_{L_2(\Omega)} = 0$ for all 
$v \in \ker(A^N + V)$.
Since $H^1(\Omega) = H^1_{V\perp}(\Omega) \oplus \ker(A^N + V)$, it follows that 
$\gota_V(u,v) = 0$ for all $v \in H^1(\Omega)$.
Hence $u \in \dom(A^N + V)$ and $(A^N + V) u = 0$.
So $u \in \ker(A^N + V)$.
Also $u \in H^1_{V\perp}(\Omega)$.
Therefore $u = 0$ and $T$ is injective.

The inclusion map $H^1_{V\perp}(\Omega) \subset L_2(\Omega)$ is compact 
and the form $\gotb$ is $L_2(\Omega)$-elliptic.
Hence by \cite{AEKS} Lemma~4.1 the operator $T$ is invertible.

The Sobolev embedding theorem, \cite{Nec2} Theorems~2.4.2 and 2.4.6, gives
$\Tr H^1(\Omega) \subset L_{ \frac{2d-2}{d-2+\varepsilon} }(\Gamma)$
for all $\varepsilon \in (0,1]$.
Moreover,  $L_{ \frac{2d-2}{d-2+\varepsilon} }(\Gamma) \subset L_{\frac{d}{d-1}}(\Gamma)$.
Hence there exists a $c > 0$ such that 
\[
\Big| \int_\Gamma \tau \, \overline{\tr v} \Big|
\leq c \, \|\tau\|_{L_d(\Gamma)} \, \|v\|_{H^1(\Omega)}
\]
for all $\tau \in L_d(\Gamma)$ and $v \in H^1(\Omega)$.
Now let $\tau \in L_{d,V\perp}(\Gamma)$.
Then the map
$\alpha \colon H^1_{V\perp}(\Omega) \to \Ci$ given by
$\alpha(v) = \int_\Gamma \tau \, \overline{\tr v}$ is continuous 
and anti-linear.
Hence there exists a unique $u \in H^1_{V\perp}(\Omega)$ such that 
$(T u, v)_{H^1_{V\perp}(\Omega)} = \alpha(v)$
for all $v \in H^1_{V\perp}(\Omega)$.
Moreover, $\|u\|_{H^1(\Omega)} \leq c \, \|T^{-1}\| \, \|\tau\|_{L_d(\Gamma)}$.
Then
\[
\gota_V(u,v) 
= \gotb(u,v)
= (Tu,v)_{H^1_{V\perp}(\Omega)}
= \alpha(v) 
= \int_\Gamma \tau \, \overline{\tr v}
\]
for all $v \in H^1_{V\perp}(\Omega)$.
Clearly $\gota(u,v) = 0$ and $\int_\Gamma \tau \, \overline{\tr v} = 0$ 
for all $v \in \ker(A^N + V)$.
Hence $\gota_V(u,v) = \int_\Gamma \tau \, \overline{\tr v}$ for all $v \in H^1(\Omega)$.
Note that $E \tau = u$.

`\ref{ldtnc208-3}'.
In the proof of Statement~\ref{ldtnc208-2} we deduced that 
$\|E \tau\|_{H^1(\Omega)} \leq c \, \|T^{-1}\| \, \|\tau\|_{L_d(\Gamma)}$
for all $\tau \in L_{d,V\perp}(\Gamma)$. 
So $E$ is continuous.

`\ref{ldtnc208-4}'.
This follows from \cite{Nit4} Theorem~3.14(ii).

`\ref{ldtnc208-5}'.
By \cite{Nit4} Theorem~3.14(ii) there exists a $c' > 0$ such that 
\[
\|E \tau\|_{C(\overline \Omega)}
\leq c'( \|E \tau\|_{L_2(\Omega)} + \|\tau\|_{L_d(\Gamma)})
\]
for all $\tau \in L_{d,V\perp}(\Gamma)$. 
But 
\[
\|E \tau\|_{L_2(\Omega)} 
\leq \|E \tau\|_{H^1(\Omega)} 
\leq c \, \|T^{-1}\| \, \|\tau\|_{L_d(\Gamma)}
.  \]
So $\|E \tau\|_{C(\overline \Omega)} \leq c'(c \, \|T^{-1}\| + 1) \|\tau\|_{L_d(\Gamma)}$
for all $\tau \in L_{d,V\perp}(\Gamma)$. 

`\ref{ldtnc208-6}'.
This follows from the definitions of $E$ and $D_V$.
\end{proof}

Now we are able to prove that the operator $D_{V,c}$ is densely defined.

\begin{proof}[{\bf Proof of Theorem~\ref{tdtnc201}.}]
Let $H^1_{V\perp}(\Omega)$, $L_{d,V\perp}(\Gamma)$ and the map 
$E \colon L_{d,V\perp}(\Gamma) \to H^1_{V\perp}(\Omega) \cap C(\overline \Omega)$
be as in Lemma~\ref{ldtnc208}.
Let $M > 0$ be such that 
\[
\|E \tau\|_{C(\overline \Omega)} \leq M \, \|\tau\|_{L_d(\Gamma)}
\]
for all $\tau \in L_{d,V\perp}(\Gamma)$.
Let $N \in \Ni_0$ and $u_1,\ldots,u_N \in \ker(A^N + V)$ be such that 
$u_1,\ldots,u_N$ is a basis for $\ker(A^N + V)$ and is orthonormal 
in $H^1(\Omega)$.
Note that $u_k \in C(\overline \Omega)$ for all $k \in \{ 1,\ldots,N \} $
by Lemma~\ref{ldtnc208}\ref{ldtnc208-1}.

Let $\varphi \in C(\Gamma)$ and $\varepsilon > 0$.
By Lemma~\ref{ldtnc206} there exists a $u \in C^1(\overline \Omega) \cap H^2(\Omega)$
such that $(\ca + V) u = 0$ and 
$\|u|_\Gamma - \varphi\|_{C(\Gamma)} < \varepsilon$.
Then $u$ has a weak conormal derivative and 
$\partial_\nu u \in L_\infty(\Gamma)$ by Lemma~\ref{ldtnc203}.
If $v \in \ker(A^N + V)$, then
\[
\int_\Gamma (\partial_\nu u) \, \overline{\tr v}
= \gota(u,v) - \int_\Omega (\ca u) \, \overline v
= \gota(u,v) + \int_\Omega V \, u \, \overline v
= \gota_V(u,v)
= (u, (A^N + V)v)_{L_2(\Omega)}
= 0
.  \]
So $\partial_\nu u \in L_{d,V\perp}(\Gamma)$.
By Lemma~\ref{ldtnc208}\ref{ldtnc208-1.5}
there exists a $\tau \in C(\Gamma) \cap L_{d,V\perp}(\Gamma)$ such that 
$\|\tau - \partial_\nu u\|_{L_d(\Gamma)} < \varepsilon$.
Choose $w = E \tau$.
Then $w \in H^1_{V\perp}(\Omega) \cap C(\overline \Omega)$ and 
\[
\gota_V(w,v) 
= \int_\Gamma \tau \, \overline{\Tr v}
\]
for all $v \in H^1(\Omega)$.
Set $c_k = (u,u_k)_{H^1(\Omega)} \in \Ci$ for all $k \in \{ 1,\ldots,N \} $.
Then by construction $w - u + \sum_{k=1}^N c_k \, u_k \in H^1_{V\perp}(\Omega)$.
Let $v \in H^1(\Omega)$.
Then
\begin{eqnarray*}
\gota_V \Big( w - u + \sum_{k=1}^N c_k \, u_k,v \Big)
& = & \gota_V(w,v) - \gota_V(u,v)  \\[0pt]
& = & \int_\Gamma \tau \, \overline{\Tr v}
   - \Big( \int_\Gamma (\partial_\nu u) \, \overline{\Tr v}
           + \int_\Omega ((\ca + V)u) \, \overline v 
     \Big)  \\
& = & \int_\Gamma (\tau -\partial_\nu u) \, \overline{\Tr v}
.
\end{eqnarray*}
Note that $\tau - \partial_\nu u \in L_{d,V\perp}(\Gamma)$.
So 
\[
w - u + \sum_{k=1}^N c_k \, u_k
= E (\tau - \partial_\nu u)
.  \]
Hence 
\[
\|w - u + \sum_{k=1}^N c_k \, u_k\|_{C(\overline \Omega)}
\leq M \, \|\tau -\partial_\nu u\|_{L_d(\Gamma)}
\leq M \, \varepsilon
.  \]
Then $\|w|_\Gamma - \varphi + \sum_{k=1}^N c_k \, u_k|_\Gamma\|_{C(\Gamma)} \leq (M+1) \varepsilon$.

Finally note that 
$\tr w \in \dom(D_V)$ and $D_V (\tr w) = \tau$ by Lemma~\ref{ldtnc208}\ref{ldtnc208-6}.
Since both $\tr w$ and $\tau$ are continuous, one deduces that $\Tr w \in \dom(D_{V,c})$.
Moreover, $\tr u_k \in \dom(D_{V,c})$ for all $k \in \{ 1,\ldots,N \} $ by 
Lemma~\ref{ldtnc208}\ref{ldtnc208-7}.
So $\varphi \in \overline{\dom(D_{V,c})}$.
The proof of Theorem~\ref{tdtnc201} is complete.
\end{proof}

\section{$C_0$-semigroup on $C(\Gamma)$} \label{Sdtnc4}

We next consider the problem whether $-D_{V,c}$ generates a $C_0$-semigroup 
on $C(\Gamma)$. 
If $(X,\cb,\mu)$ is a measure space, then
for operators on the Hilbert space $L_2(X)$ the notation of positivity has 
two different meanings and in the next lemma we need both of them.
We will use the following terminology if confusion is possible.
If $B$ is an operator in a Hilbert space $H$, then we say that 
$B$ is {\bf positive in the Hilbert space sense} if $(B u,u)_H \geq 0$
for all $u \in \dom(B)$.
If $B \colon L_2(X) \to L_2(X)$ is a linear operator, then we say 
that $B$ is {\bf positive in the Banach lattice sense}
if $B f \geq 0$ for all $f \in L_2(X)$ with $f \geq 0$.
Here $f \geq 0$ means that $f(x) \geq 0$ for almost all $x \in X$.
Below we consider the two cases $X = \Omega$, provided with the Lebesgue
measure, and $X = \Gamma$, provided with the $(d-1)$-dimensional Hausdorff measure.

The following proposition is known if $a_{kl} = \delta_{kl}$, that is
if $A = - \Delta$.

\begin{prop} \label{pdtnc213}
For all $k,l \in \{ 1,\ldots,d \} $ let $a_{kl} \in L_\infty(\Omega,\Ri)$.
Let $V \in L_\infty(\Omega,\Ri)$.
Suppose {\rm (\ref{eSdtnc2;3})}, {\rm (\ref{eSdtnc2;1})} and {\rm (\ref{eSdtnc2;2})}
are valid.
\begin{tabel} 
\item \label{pdtnc213-1}
Suppose that $A^D + V$ is positive in the Hilbert space sense and $0 \not\in \sigma(A^D + V)$.
Then the semigroup $S^V$ is positive in the Banach lattice sense.
\item \label{pdtnc213-2}
Suppose that $V \geq 0$.
Then the semigroup $S^V$ is submarkovian.
\end{tabel}
\end{prop}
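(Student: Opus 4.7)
The plan is to verify the Beurling--Deny/Ouhabaz criterion for positivity (respectively for submarkovianity) applied to the closed symmetric lower bounded form $\gotb$ associated with the self-adjoint operator $D_V$ on $L_2(\Gamma)$. The preparatory step is the variational characterisation
\[
\gotb(\varphi)
= \min \bigl\{ \gota_V(u) : u \in H^1(\Omega),\; \Tr u = \varphi \bigr\},
\qquad \varphi \in \dom(\gotb) = \Tr H^1(\Omega),
\]
with the minimum attained at the unique $u_\varphi \in H^1(\Omega)$ satisfying $(\ca + V)u_\varphi = 0$ and $\Tr u_\varphi = \varphi$. Writing any competitor as $u = u_\varphi + v$ with $v \in H^1_0(\Omega)$, the cross term vanishes because $(\ca + V)u_\varphi = 0$ weakly, leaving $\gota_V(u) = \gota_V(u_\varphi) + \gota_V(v) \geq \gota_V(u_\varphi)$ by positivity of $A^D + V$ on $H^1_0(\Omega)$. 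Existence and uniqueness of $u_\varphi$ come from $0 \not\in \sigma(A^D + V)$. In case~\ref{pdtnc213-2} the hypothesis $V \geq 0$ together with ellipticity and Poincar\'e's inequality yields coercivity of $\gota_V$ on $H^1_0(\Omega)$, hence $A^D + V \geq 0$ and $0 \not\in \sigma(A^D + V)$, so the assumptions of~\ref{pdtnc213-1} are automatic.

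For~\ref{pdtnc213-1} I would verify Ouhabaz's criterion $\gotb(\varphi^+) + \gotb(\varphi^-) \leq \gotb(\varphi)$ for real $\varphi \in \dom(\gotb)$. Since $\Tr$ respects lattice operations, $\Tr u_\varphi^\pm = \varphi^\pm$, so the minimising characterisation gives $\gotb(\varphi^\pm) \leq \gota_V(u_\varphi^\pm)$. The values and the weak gradients of $u_\varphi^+$ and $u_\varphi^-$ are supported on the disjoint sets $\{u_\varphi > 0\}$ and $\{u_\varphi < 0\}$, respectively, so the cross terms in $\gota_V(u_\varphi^+ - u_\varphi^-)$ vanish and $\gota_V(u_\varphi^+) + \gota_V(u_\varphi^-) = \gota_V(u_\varphi) = \gotb(\varphi)$, giving the desired inequality.

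For~\ref{pdtnc213-2} positivity is supplied by~\ref{pdtnc213-1}, and it remains to prove $L_\infty$-contractivity, whose Beurling--Deny form is $\gotb((0 \vee \varphi) \wedge 1) \leq \gotb(\varphi)$. Setting $w = (0 \vee u_\varphi) \wedge 1 \in H^1(\Omega)$, one has $\Tr w = (0 \vee \varphi) \wedge 1$, so $\gotb((0 \vee \varphi) \wedge 1) \leq \gota_V(w)$. Since $\nabla w = \one_{\{0 < u_\varphi < 1\}} \nabla u_\varphi$ almost everywhere and $w^2 \leq u_\varphi^2$ pointwise, ellipticity combined with $V \geq 0$ yields $\gota_V(w) \leq \gota_V(u_\varphi) = \gotb(\varphi)$.

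The main obstacle is the variational characterisation of $\gotb$ and the identification of $\dom(\gotb)$ with $\Tr H^1(\Omega)$; the twin hypotheses of positivity of $A^D + V$ in the Hilbert space sense and $0 \not\in \sigma(A^D + V)$ are exactly what is needed to make the minimum finite and attained. Once that is in place, both positivity and $L_\infty$-contractivity reduce to the elementary disjoint-support and chain-rule computations sketched above.
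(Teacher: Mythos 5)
Your proposal is correct, and it spells out in full the Beurling--Deny/Ouhabaz variational argument that the paper itself abbreviates by citing \cite{ArM} Theorem~5.1 and \cite{EO4} Theorem~2.3. The key ingredients you use---the minimising characterisation $\gotb(\varphi)=\min\{\gota_V(u): u\in H^1(\Omega),\ \Tr u=\varphi\}$, the vanishing of the cross term for $H^1_0$-perturbations, the disjoint-support splitting $\gota_V(u_\varphi)=\gota_V(u_\varphi^+)+\gota_V(u_\varphi^-)$, and the competitor $(0\vee u_\varphi)\wedge 1$ for $L_\infty$-contractivity---are exactly what those references use, so this is the same proof written out.
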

\begin{proof}
Statement~\ref{pdtnc213-1} can be proved as in \cite{ArM} Theorem~5.1 
or \cite{EO4} Theorem 2.3(a), with obvious modifications.
Statement~\ref{pdtnc213-2} is similar to \cite{EO4} Theorem 2.3(b).
\end{proof}

It turns out that the resolvent of $D_{V,c}$ behaves well.
Recall that $D_V$ is a lower-bounded self-adjoint operator.

\begin{lemma} \label{ldtnc214}
For all $k,l \in \{ 1,\ldots,d \} $ let $a_{kl} \in L_\infty(\Omega,\Ri)$.
Let $V \in L_\infty(\Omega,\Ri)$.
Suppose {\rm (\ref{eSdtnc2;3})}, {\rm (\ref{eSdtnc2;1})} and {\rm (\ref{eSdtnc2;2})}
are valid.
Let $\omega \in \Ri$ be such that $\|S^V_t\|_{2 \to 2} \leq e^{\omega t}$ for all 
$t > 0$.
Let $\lambda \in (\omega,\infty)$.
Then one has the following.
\begin{tabel}
\item \label{ldtnc214-1}
$\lambda \, I + D_{V,c}$ is invertible.
\item \label{ldtnc214-2}
$(\lambda \, I + D_{V,c})^{-1} \psi = (\lambda \, I + D_V)^{-1} \psi$
for all $\psi \in C(\Gamma)$.
\item \label{ldtnc214-3}
If $A^D + V$ is positive in the Hilbert space sense, 
then $(\lambda \, I + D_{V,c})^{-1}$ is positive in the Banach lattice sense.
\end{tabel}
\end{lemma}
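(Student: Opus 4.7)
The plan is to treat parts (a) and (b) together, reducing everything to the known invertibility of $\lambda\,I+D_V$ on $L_2(\Gamma)$ and to Lemma~\ref{ldtnc231}, and then derive part (c) from positivity of the semigroup $S^V$ via the Laplace transform identity for the resolvent.

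First, since $-D_V$ generates the $C_0$-semigroup $S^V$ with $\|S^V_t\|_{2\to 2}\le e^{\omega t}$, the standard semigroup theory gives $\lambda\in\rho(-D_V)$, so $\lambda\,I+D_V$ is bijective on $L_2(\Gamma)$. Given $\psi\in C(\Gamma)$, set $\varphi=(\lambda\,I+D_V)^{-1}\psi\in L_2(\Gamma)$. By Lemma~\ref{ldtnc231} we have $\varphi\in C(\Gamma)$, and then $D_V\varphi=\psi-\lambda\,\varphi\in C(\Gamma)$. Hence $\varphi\in\dom(D_{V,c})$ and $(\lambda\,I+D_{V,c})\varphi=\psi$, which simultaneously proves surjectivity of $\lambda\,I+D_{V,c}$ on $C(\Gamma)$ and the identity in Statement~\ref{ldtnc214-2}. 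Injectivity of $\lambda\,I+D_{V,c}$ is immediate from the injectivity of $\lambda\,I+D_V$, because $D_{V,c}\subset D_V$ as operators. This establishes Statements~\ref{ldtnc214-1} and~\ref{ldtnc214-2}.

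For Statement~\ref{ldtnc214-3}, the assumption that $A^D+V$ is positive in the Hilbert space sense allows us to invoke Proposition~\ref{pdtnc213}\ref{pdtnc213-1}, which tells us that the semigroup $S^V$ is positive in the Banach lattice sense on $L_2(\Gamma)$. Since $\lambda>\omega$, the resolvent of the generator $-D_V$ is given by the absolutely convergent Laplace integral
\[
(\lambda\,I+D_V)^{-1}\psi=\int_0^\infty e^{-\lambda t}\,S^V_t\psi\,dt
\]
for every $\psi\in L_2(\Gamma)$. If $\psi\in C(\Gamma)$ with $\psi\ge 0$, then $S^V_t\psi\ge 0$ for every $t>0$, and hence $(\lambda\,I+D_V)^{-1}\psi\ge 0$. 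By Statement~\ref{ldtnc214-2} this coincides with $(\lambda\,I+D_{V,c})^{-1}\psi$, proving positivity.

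There is no real obstacle here: parts (a) and (b) are essentially a bookkeeping consequence of Lemma~\ref{ldtnc231} once one notices that $C(\Gamma)$-regularity of $\varphi$ and of $D_V\varphi=\psi-\lambda\varphi$ follow together, and part (c) is a standard semigroup-to-resolvent transfer of positivity. The only point requiring a tiny amount of care is justifying the Laplace formula for $\psi\in C(\Gamma)\subset L_2(\Gamma)$, which is already built into the fact that $-D_V$ generates~$S^V$ on $L_2(\Gamma)$.
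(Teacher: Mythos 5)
Your proof is correct and follows essentially the same route as the paper: parts (a) and (b) proceed by the same reduction to Lemma~\ref{ldtnc231}, and for part (c) you merely make explicit the standard Laplace-transform transfer of positivity that the paper invokes implicitly when it cites Proposition~\ref{pdtnc213}\ref{pdtnc213-1} and then appeals to part (b).
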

\begin{proof}
`\ref{ldtnc214-1}'.
Let $\psi \in C(\Gamma)$.
Write $\varphi = (\lambda \, I + D_V)^{-1} \psi \in L_2(\Gamma)$.
Then $D_V \varphi = \psi - \lambda \, \varphi$ and 
$\varphi \in C(\Gamma)$ by Lemma~\ref{ldtnc231}.
So $\psi - \lambda \, \varphi \in C(\Gamma)$ and $\varphi \in \dom(D_{V,c})$.
Obviously $(\lambda \, I + D_{V,c}) (\lambda \, I + D_V)^{-1} \psi = \psi$.
So the operator $\lambda \, I + D_{V,c}$ is surjective.
Since $\lambda \, I + D_V$ is injective, also the operator 
$\lambda \, I + D_{V,c}$ is injective.
Therefore $\lambda \, I + D_{V,c}$ is bijective, that is invertible.

Statement~\ref{ldtnc214-2} is now clear.

`\ref{ldtnc214-3}'.
It follows from Proposition~\ref{pdtnc213}\ref{pdtnc213-1} that the 
operator $(\lambda \, I + D_V)^{-1}$ is positive in the Banach lattice sense on $L_2(\Gamma)$.
Then the statement is a consequence of Statement~\ref{ldtnc214-2}.
\end{proof}

We now prove the main theorem of this paper.
In view of our general assumption~(\ref{eSdtnc2;2}), 
Condition~\ref{tdtnc215-3} can be reformulated by saying that the 
first eigenvalue of $A^D + V$ is strictly positive.
In contrast to this, Condition~\ref{tdtnc215-2} does not include
any spectral condition (except that $0 \not\in \sigma(A^D + V)$).
As a matter of fact, in fact the potential can be very negative.
Condition~\ref{tdtnc215-1} is a special case of Condition~\ref{tdtnc215-3}.
We give, however, diferent proofs for these two cases.
Whereas under Condition~\ref{tdtnc215-1} and \ref{tdtnc215-2}
an $L_\infty$-bound is known for the semigroup $S^V$, we
use for \ref{tdtnc215-3} that any densely defined resolvent positive 
operator on $C(\Gamma)$ is the generator of a $C_0$-semigroup.

\begin{thm} \label{tdtnc215}
For all $k,l \in \{ 1,\ldots,d \} $ let $a_{kl} \in W^{1,\infty}(\Omega,\Ri)$.
Let $V \in L_\infty(\Omega,\Ri)$.
Suppose {\rm (\ref{eSdtnc2;3})}, {\rm (\ref{eSdtnc2;1})} and {\rm (\ref{eSdtnc2;2})}
are valid.
Moreover, suppose that at least one of the following conditions is valid.
\begin{tabel}
\item \label{tdtnc215-1}
$V \geq 0$.
\item \label{tdtnc215-2}
One has $a_{kl} = \delta_{kl}$ for all $k,l \in \{ 1,\ldots,d \} $ 
and the set $\Omega$ has a $C^{1,1}$-boundary.
\item \label{tdtnc215-3}
$A^D + V$ is positive in the Hilbert space sense.
\end{tabel}
Then $S^V_t C(\Gamma) \subset C(\Gamma)$ for all $t > 0$ and 
$(S^V_t |_{C(\Gamma)})_{t > 0}$ is a $C_0$-semigroup whose generator is $-D_{V,c}$.
\end{thm}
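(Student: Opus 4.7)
The plan is to build on the two key ingredients already assembled: the invariance $S^V_t L_2(\Gamma) \subset C(\Gamma)$ from Proposition~\ref{pdtnc232} and the density of $\dom(D_{V,c})$ in $C(\Gamma)$ from Theorem~\ref{tdtnc201}. Setting $T^V_t := S^V_t|_{C(\Gamma)}$, Proposition~\ref{pdtnc232} together with the closed graph theorem (applied to $S^V_t\colon L_2(\Gamma)\to C(\Gamma)$) already shows that each $T^V_t$ is bounded on $C(\Gamma)$ and that $T^V$ is a semigroup on $C(\Gamma)$. Thus the only missing piece is strong continuity at $t=0$; once this is established, Lemma~\ref{ldtnc214}\ref{ldtnc214-2} identifies the resolvent of the generator with $(\lambda+D_{V,c})^{-1}$ for large $\lambda$, and Laplace-transform uniqueness then forces the generator to be $-D_{V,c}$.

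Under \ref{tdtnc215-1} or \ref{tdtnc215-2} I would proceed via a uniform $L_\infty$-bound on $T^V$. In case \ref{tdtnc215-1}, $V\geq 0$, so $S^V$ is submarkovian by Proposition~\ref{pdtnc213}\ref{pdtnc213-2} and $\|T^V_t\|_{C(\Gamma)\to C(\Gamma)}\le 1$ for all $t>0$. In case \ref{tdtnc215-2}, an analogous bound $\|T^V_t\|_{C(\Gamma)\to C(\Gamma)}\le M e^{\omega t}$ for the Dirichlet-to-Neumann semigroup of the Laplacian on a $C^{1,1}$-domain is available from the literature via Poisson-kernel/boundary estimates. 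In either sub-case, fix $\varphi\in\dom(D_{V,c})$ and use that $S^V$ is a $C_0$-semigroup on $L_2(\Gamma)$ with $\varphi\in\dom(D_V)$ to obtain
\[
  T^V_t\varphi-\varphi = -\int_0^t T^V_s D_{V,c}\varphi\,ds
\]
as an identity in $L_2(\Gamma)$. Because $D_{V,c}\varphi\in C(\Gamma)$, the integrand is uniformly bounded in $L_\infty(\Gamma)$ by $M e^{\omega s}\|D_{V,c}\varphi\|_\infty$, so the integral lies in $L_\infty(\Gamma)$ with the same bound. As both $T^V_t\varphi$ and $\varphi$ are continuous on $\Gamma$, the $L_\infty$-estimate is in fact the sup-norm estimate $\|T^V_t\varphi-\varphi\|_{C(\Gamma)}\le\|D_{V,c}\varphi\|_{C(\Gamma)}\int_0^t M e^{\omega s}\,ds\to 0$, giving strong continuity at $0$ on the dense subspace $\dom(D_{V,c})$. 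Uniform boundedness of $T^V_s$ for small $s$ then extends this to all of $C(\Gamma)$.

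For case \ref{tdtnc215-3} I would instead appeal directly to Arendt's generation theorem: a densely defined resolvent positive operator on $C(\Gamma)$ (with resolvent set containing a real ray) generates a $C_0$-semigroup. Here $A^D+V$ being positive in the Hilbert space sense gives positivity of $S^V$ via Proposition~\ref{pdtnc213}\ref{pdtnc213-1}, whence Lemma~\ref{ldtnc214}\ref{ldtnc214-3} supplies positivity of $(\lambda+D_{V,c})^{-1}$ for large $\lambda$, while density of $\dom(D_{V,c})$ is Theorem~\ref{tdtnc201}. Arendt's theorem then produces a $C_0$-semigroup with generator $-D_{V,c}$, which must agree with $T^V$ by the resolvent-uniqueness argument above. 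The main obstacle is case \ref{tdtnc215-2}: case \ref{tdtnc215-1} yields the sup-norm bound for free from submarkovianity, and case \ref{tdtnc215-3} outsources the analysis to an abstract generation theorem, but case \ref{tdtnc215-2} genuinely requires a concrete $L_\infty$-bound for the Dirichlet-to-Neumann semigroup of the Laplacian that is not formal and typically rests on delicate Poisson-kernel estimates drawing on the additional $C^{1,1}$ smoothness of the boundary.
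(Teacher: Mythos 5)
Your proposal is correct and follows essentially the same two-pronged strategy as the paper: cases \ref{tdtnc215-1} and \ref{tdtnc215-2} are handled by a uniform $L_\infty$-bound (submarkovianity, resp.\ the $C^{1,1}$ Poisson-kernel estimate from \cite{AE7} Proposition~6.10) together with the integral estimate $\|(I-T^V_t)\varphi\|_\infty \le \int_0^t\|S^V_s D_V\varphi\|_\infty\,ds$ on the dense subspace $\dom(D_{V,c})$, while case \ref{tdtnc215-3} invokes Arendt's generation theorem for densely defined resolvent positive operators on $C(\Gamma)$. The only cosmetic difference is that the paper also records that case \ref{tdtnc215-1} can alternatively be absorbed into the resolvent-positivity argument.
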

\begin{proof} 
{\bf Case I. Suppose \ref{tdtnc215-1} or \ref{tdtnc215-2} is valid. }  
Then by 
Proposition~\ref{pdtnc213}\ref{pdtnc213-2} (in case \ref{tdtnc215-1}) 
and \cite{AE7} Proposition~6.10 (in case \ref{tdtnc215-2})
the semigroup $S^V$ leaves $L_\infty(\Gamma)$ invariant and 
there exists an $M \geq 1$ such that $\|S^V_t \varphi\|_\infty \leq M \, \|\varphi\|_\infty$
for all $t \in (0,1]$ and $\varphi \in L_\infty(\Gamma)$.
Then $\|T^V_t\|_{C(\Gamma) \to C(\Gamma)} \leq M$
for all $t \in (0,1]$.
If $\varphi \in \dom(D_{V,c})$, then 
\[
\|(I - T^V_t) \varphi\|_{C(\Gamma)}
\leq \int_0^t \|S^V_s \, D_V \varphi\|_\infty \, ds
\leq M \, t \, \|D_V \varphi\|_\infty
\]
for all $t \in (0,1]$.
Hence $\lim_{t \downarrow 0} T^V_t \varphi = \varphi$ in $C(\Gamma)$.
Since $\dom(D_{V,c})$ is dense in $C(\Gamma)$ by Theorem~\ref{tdtnc201},
one deduces that $T^V$ is a $C_0$-semigroup on $C(\Gamma)$.
It is easy to verify that $-D_{V,c}$ is the generator.

{\bf Case II.  Suppose \ref{tdtnc215-1} or \ref{tdtnc215-3} is valid. }
Then $-D_{V,c}$ is a densely defined resolvent positive operator
by Theorem~\ref{tdtnc201} and Lemma \ref{ldtnc214}\ref{ldtnc214-3}.
Moreover, the positive cone in $C(\Gamma)$ has a non-empty interior.
Hence $-D_{V,c}$ is the generator of a $C_0$-semigroup by 
\cite{Are8} Corollary~2.3.
\end{proof}

Whereas under Condition~\ref{tdtnc215-1} or \ref{tdtnc215-3} the 
semigroup $T^V$ is positive (in the Banach lattice sense), this is 
in general not the case under Condition~\ref{tdtnc215-2}, see
\cite{Daners4}.

\begin{cor} \label{cdtnc404}
For all $k,l \in \{ 1,\ldots,d \} $ let $a_{kl} \in W^{1,\infty}(\Omega,\Ri)$.
Let $V \in L_\infty(\Omega,\Ri)$.
Suppose {\rm (\ref{eSdtnc2;3})}, {\rm (\ref{eSdtnc2;1})} and {\rm (\ref{eSdtnc2;2})}
are valid.
Suppose $A^D + V$ is positive in the Hilbert space sense.
Then for all $p \in [1,\infty)$ the semigroup $S^V$ extends to a $C_0$-semigroup
on $L_p(\Gamma)$.
\end{cor}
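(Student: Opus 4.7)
The plan is to combine the $C_0$-semigroup property on $C(\Gamma)$ furnished by Theorem~\ref{tdtnc215}\ref{tdtnc215-3} with positivity of $S^V$ on $L_2(\Gamma)$, so as to get a uniform $L_\infty$ bound on $S^V_t$. Duality and Riesz--Thorin interpolation then deliver bounds on every $L_p(\Gamma)$, and strong continuity is obtained from density of $C(\Gamma)$ in $L_p(\Gamma)$ together with the $C_0$-property of $T^V$.

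More precisely, under hypothesis \ref{tdtnc215-3} of Theorem~\ref{tdtnc215}, the semigroup $T^V = (S^V_t|_{C(\Gamma)})_{t > 0}$ is a $C_0$-semigroup on $C(\Gamma)$, so there exist $M \geq 1$ and $\omega \in \Ri$ with $\|T^V_t\|_{C(\Gamma) \to C(\Gamma)} \leq M \, e^{\omega t}$ for all $t > 0$. Applying this to $\one \in C(\Gamma)$ yields $\|S^V_t \one\|_{L_\infty(\Gamma)} \leq M \, e^{\omega t}$. On the other hand, the positivity of $A^D + V$ in the Hilbert space sense combined with Proposition~\ref{pdtnc213}\ref{pdtnc213-1} shows that $S^V$ is positive on $L_2(\Gamma)$. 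Since $\Gamma$ has finite measure, $L_\infty(\Gamma) \subset L_2(\Gamma)$, and for real $f \in L_\infty(\Gamma)$ the inequality $-\|f\|_\infty \one \leq f \leq \|f\|_\infty \one$ together with positivity gives $|S^V_t f| \leq \|f\|_\infty \, S^V_t \one$, hence $\|S^V_t f\|_{L_\infty(\Gamma)} \leq M \, e^{\omega t} \, \|f\|_{L_\infty(\Gamma)}$. Splitting into real and imaginary parts extends this bound to complex-valued $f$ up to an absolute constant.

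Since $S^V_t$ is self-adjoint on $L_2(\Gamma)$, duality then provides a consistent extension of $S^V_t$ to a bounded operator on $L_1(\Gamma)$ with a comparable norm, and Riesz--Thorin interpolation yields consistent bounded extensions $S^{V,(p)}_t$ on $L_p(\Gamma)$ for every $p \in [1,\infty]$, with $\|S^{V,(p)}_t\|_{p \to p} \leq C \, e^{\omega t}$ for some $C > 0$ independent of $p$ and locally uniformly bounded near $t = 0$. For strong continuity on $L_p(\Gamma)$ with $p \in [1,\infty)$ I would use that $C(\Gamma)$ is dense in $L_p(\Gamma)$: for $\varphi \in C(\Gamma)$ the $C_0$-property of $T^V$ gives $S^V_t \varphi = T^V_t \varphi \to \varphi$ in $C(\Gamma)$, and since $\Gamma$ has finite measure this convergence also takes place in $L_p(\Gamma)$. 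Combined with the local boundedness of $\|S^{V,(p)}_t\|_{p \to p}$ near $t = 0$, a standard $\varepsilon/3$-argument upgrades this to strong continuity on all of $L_p(\Gamma)$; alternatively, one can invoke Voigt's theorem \cite{Voi} as in the proof of Proposition~\ref{pdtnc524}.

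No serious obstacle arises once Theorem~\ref{tdtnc215} is in place; the only point needing a brief verification is that the different $L_p$-extensions agree on overlaps, which is automatic since they all stem from the self-adjoint semigroup on $L_2(\Gamma)$ and $L_\infty(\Gamma) \cap L_2(\Gamma) = L_\infty(\Gamma)$ is dense in $L_p(\Gamma)$ for every $p \in [1,\infty)$. Note that, in contrast to Proposition~\ref{pdtnc524}, irreducibility (hence the strict positivity of the first eigenfunction) is not needed: the role played there by $u_1$ is taken here by the constant function $\one$, whose orbit under $T^V$ is uniformly bounded on bounded time intervals simply because $T^V$ is a $C_0$-semigroup on $C(\Gamma)$.
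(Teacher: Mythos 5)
Your proposal is correct and follows essentially the same route as the paper: both rest on Theorem~\ref{tdtnc215}\ref{tdtnc215-3} giving the uniform bound $\sup_{t \in (0,1]} \|T^V_t\|_{C(\Gamma) \to C(\Gamma)} < \infty$, transfer this to $L_1(\Gamma)$ by self-adjoint duality, interpolate to $L_p(\Gamma)$, and deduce strong continuity from the finiteness of the measure. The only cosmetic difference is that you pass through an $L_\infty$-bound via positivity and domination by $S^V_t \one$, whereas the paper dualizes directly against $C(\Gamma)$ test functions and so never needs positivity at this stage.
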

\begin{proof}
Let $t > 0$ and $\varphi \in L_2(\Gamma)$.
Then 
\begin{eqnarray*}
\|S^V_t \varphi\|_1
& = & \sup_{\scriptstyle \psi \in C(\Gamma) \atop  
        \scriptstyle \|\psi\|_\infty \leq 1}
     |(S^V_t \varphi, \psi)_{L_2(\Gamma)}|  \\
& = & \sup_{\scriptstyle \psi \in C(\Gamma) \atop  
        \scriptstyle \|\psi\|_\infty \leq 1}
     |(\varphi, T^V_t \psi)_{L_2(\Gamma)}|
\leq \sup_{\scriptstyle \psi \in C(\Gamma) \atop  
        \scriptstyle \|\psi\|_\infty \leq 1}
     \|\varphi\|_1 \, \|T^V_t \psi\|_\infty
\leq \|T^V_t\| \, \|\varphi\|_1
.  
\end{eqnarray*}
Hence $S^V_t$ extends to a bounded operator $S^{V(1)}_t \colon L_1 \to L_1$
and $\|S^{V(1)}_t\| \leq \|T^V_t\|$.
It is easy to verify that $S^{V(1)}$ is a semigroup on $L_1$.
Moreover, $\sup_{t \in (0,1]} \|S^{V(1)}_t\| \leq \sup_{t \in (0,1]} \|T^V_t\| < \infty$.
Since $\Gamma$ has finite measure, the semigroup $S^{V(1)}$ is a
$C_0$-semigroup.
Then by duality and interpolation the corollary follows.
\end{proof}

\section{The Robin semigroup on $C(\overline \Omega)$} \label{Sdtnc6}

In order to prove irreducibility of $T^V$ in case $A^D + V$ is positive
in the Hilbert space sense, we make a detour and prove irreducibility 
for the Robin Laplacian.

Throughout this section we assume that $\Omega \subset \Ri^d$ is a bounded open 
connected set 
with Lipschitz boundary, $a_{kl} = a_{lk} \in L_\infty(\Omega,\Ri)$, 
the ellipticity condition (\ref{eSdtnc2;1}) is valid and 
$V \in L_\infty(\Omega,\Ri)$.
Moreover, let $\beta \in L_\infty(\Gamma,\Ri)$.
We do not assume that $0 \not\in \sigma(A^D + V)$.
Define the sesquilinear form 
$\gota_{V,\beta} \colon H^1(\Omega) \times H^1(\Omega) \to \Ci$ by 
\[
\gota_{V,\beta}(u,v)
= \gota_V(u,v) + \int_\Gamma \beta \, \Tr u \, \overline{\Tr v}
.  \]
Then $\gota_{V,\beta}$ is an $L_2(\Omega)$-elliptic sesquilinear form.
Let $A_{V,\beta}$ be the associated operator.
Then $A_{V,\beta}$ is self-adjoint and bounded below.
It is easy to see that 
\[
\dom(A_{V,\beta})
= \{ u \in H^1(\Omega) : \ca u \in L_2(\Omega) \mbox{ and }
     \partial_\nu u + \beta \, \Tr u = 0 \}
\]
and $A_{V,\beta} u = \ca u + V \, u$ for all $u \in \dom(A_{V,\beta})$.
So $A_{V,\beta}$ is the realisation of $\ca + V$ with Robin boundary 
conditions.
The operator $-A_{V,\beta}$ generates a $C_0$-semigroup $S^{V,\beta}$ 
on $L_2(\Omega)$, which is called the Robin semigroup.
If $\beta \geq 0$ then it is well known that the semigroup $S^{V,\beta}$ has 
Gaussian kernel bounds (see \cite{AE1} Theorem~4.9) and therefore 
the semigroup $S^{V,\beta}$ on $L_2(\Omega)$ extrapolates to a 
$C_0$-semigroup on $L_p(\Omega)$ for all $p \in [1,\infty)$.
It is an open problem whether the same is valid without the condition
$\beta \geq 0$.

The main theorem of this section is as follows.

\begin{thm} \label{tdtnc601}
Adopt the above notation and assumptions.
\begin{tabel}
\item \label{tdtnc601-1}
The semigroup $S^{V,\beta}$ is positive (in the Banach lattice sense).
\item \label{tdtnc601-1.2}
If $t > 0$ then $S^{V,\beta}_t L_2(\Omega) \subset C(\overline \Omega)$.
\item \label{tdtnc601-1.4}
If $\lambda > \omega$, then 
$(\lambda \, I + A_{V,\beta})^{-d} L_2(\Omega) \subset C(\overline \Omega)$.
Here $\omega \in \Ri$ is chosen large enough such that 
$\sup_{t > 0} e^{-\omega t} \|S^{V,\beta}_t\|_{2 \to 2} < \infty$.
\item \label{tdtnc601-2}
For all $t > 0$ the operator $S^{V,\beta}_t$ has a continuous 
kernel $k_t \colon \overline \Omega \times \overline \Omega \to \Ri$.
\item \label{tdtnc601-3}
The operator $A_{V,\beta}$ has compact resolvent.
\item \label{tdtnc601-4}
The semigroup $S^{V,\beta}$ is irreducible {\rm (}on $L_2(\Omega)${\rm )}.
\item \label{tdtnc601-4.2}
The eigenvalue $\min \sigma(A_{V,\beta})$ is simple.
\item \label{tdtnc601-4.5}
The semigroup $(S^{V,\beta}_t|_{C(\overline \Omega)})_{t > 0}$
is a $C_0$-semigroup on $C(\overline \Omega)$.
\item \label{tdtnc601-5}
The semigroup $(S^{V,\beta}_t|_{C(\overline \Omega)})_{t > 0}$ is irreducible 
{\rm (}on $C(\overline \Omega)${\rm )}.
\item \label{tdtnc601-6}
There exits a $\delta > 0$ such that $u_1(x) \geq \delta$
for all $x \in \overline \Omega$, where 
$u_1 \in L_2(\overline \Omega)$ is an eigenfunction
of $A_{V,\beta}$ with eigenvalue $\min \sigma(A_{V,\beta})$
such that $u_1 \geq 0$ almost everywhere.
\item \label{tdtnc601-7}
For all $p \in [1,\infty)$ the semigroup $S^{V,\beta}$
extends consistently to a $C_0$-semigroup on $L_p(\Omega)$.
\end{tabel}
\end{thm}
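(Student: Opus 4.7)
The strategy is to push everything through the abstract framework of Section~\ref{Sdtnc5} applied to $K = \overline\Omega$ with the Lebesgue measure; the real work lies in elliptic regularity and a density-of-domain argument that bridges $L_2(\Omega)$ to $C(\overline\Omega)$. The statements split naturally into a standard $L_2$-block \ref{tdtnc601-1}--\ref{tdtnc601-4.2}, a bridge \ref{tdtnc601-4.5}, and an abstract-consequences block \ref{tdtnc601-5}--\ref{tdtnc601-7}.

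For the first block, positivity \ref{tdtnc601-1} follows from Ouhabaz's invariance criterion applied to $\gota_{V,\beta}$: one verifies $\gota_{V,\beta}(u^+, u^-) \leq 0$ for real $u \in H^1(\Omega)$, the boundary contribution vanishing because $\Tr u^+ \cdot \Tr u^- = 0$ almost everywhere on $\Gamma$. For the smoothing \ref{tdtnc601-1.2} I would run an iterated $L_p$-bootstrap exactly as in the proof of Proposition~\ref{pdtnc232}, starting from the weak formulation of the Robin problem and applying the interior and Robin-type boundary regularity results of \cite{Nit4} at each step; since $\beta$ is merely bounded, the boundary term $\int_\Gamma \beta\,\Tr u \,\overline{\Tr v}$ must be absorbed into the right-hand side and the iteration run on the $L_p(\Gamma)$-scale of the trace. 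Statement \ref{tdtnc601-1.4} is then an immediate integral-representation consequence via the Laplace transform. Statement \ref{tdtnc601-2} is Theorem~\ref{tdtnc240}\ref{tdtnc240-1} applied to the self-adjoint semigroup $S^{V,\beta}$. Compactness \ref{tdtnc601-3} follows from the compact Rellich embedding $H^1(\Omega) \hookrightarrow L_2(\Omega)$. Irreducibility on $L_2$ \ref{tdtnc601-4} is the standard consequence of the connectedness of $\Omega$ via Ouhabaz's criterion (see \cite{Ouh5} Corollary~2.11), and simplicity of the bottom eigenvalue \ref{tdtnc601-4.2} then follows from Lemma~\ref{ldtnc528}\ref{ldtnc528-4}.

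The main obstacle is \ref{tdtnc601-4.5}. Let $A_{V,\beta,c}$ denote the part of $A_{V,\beta}$ in $C(\overline\Omega)$. Using \ref{tdtnc601-1.4} together with \ref{tdtnc601-1}, the operator $-A_{V,\beta,c}$ is resolvent positive on $C(\overline\Omega)$, and the positive cone has non-empty interior. By Arendt's theorem (\cite{Are8} Corollary~2.3) it therefore generates a $C_0$-semigroup as soon as $\dom(A_{V,\beta,c})$ is dense in $C(\overline\Omega)$. Proving this density is the Robin analogue of Theorem~\ref{tdtnc201} and is where most of the effort goes. I would follow the blueprint of Section~\ref{Sdtnc3}: given $\varphi \in C(\overline\Omega)$ and $\varepsilon > 0$, approximate by some $F \in C^2(\Ri^d)$ and correct $F|_{\overline\Omega}$ by a solution of a Robin problem for $\ca + V$ whose data cancel $(\ca + V) F$ and the boundary defect $\partial_\nu F + \beta \,\Tr F$; a uniform $C^\alpha$-estimate up to the boundary in the spirit of Lemma~\ref{ldtnc209}, but for the Robin realisation on a sequence of approximating smooth domains, should drive the correction to zero uniformly on $\overline\Omega$ and leave an element of $\dom(A_{V,\beta,c})$ within $\varepsilon$ of $\varphi$. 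Once \ref{tdtnc601-4.5} is in hand, the generator is $-A_{V,\beta,c}$ by construction and the restricted semigroup is compact thanks to \ref{tdtnc601-2}.

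The final block is then purely abstract. Statements \ref{tdtnc601-5} and \ref{tdtnc601-6} follow from Corollary~\ref{cdtnc522.5} applied to the compact connected metric space $\overline\Omega$ equipped with the Lebesgue measure, whose support is all of $\overline\Omega$; and \ref{tdtnc601-7} is exactly the conclusion of Proposition~\ref{pdtnc524} in the same setup.
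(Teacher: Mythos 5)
Your outline reproduces the paper's architecture faithfully -- split into an $L_2$-block, a bridge to $C(\overline\Omega)$, and abstract consequences from Section~\ref{Sdtnc5} -- and the individual routings for \ref{tdtnc601-1}, \ref{tdtnc601-2}, \ref{tdtnc601-3}, \ref{tdtnc601-4}, \ref{tdtnc601-4.2}, \ref{tdtnc601-5}, \ref{tdtnc601-6}, \ref{tdtnc601-7} agree with or are equivalent to what the paper does (the paper actually establishes \ref{tdtnc601-1.2} by citing \cite{Nit4} Theorem~3.14(ii) followed by the interpolation step \ref{tdtnc240-3}$\Rightarrow$\ref{tdtnc240-1} of Theorem~\ref{tdtnc240}, rather than rerunning the bootstrap by hand, but that is cosmetic). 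Two places where your route is genuinely different and incomplete deserve comment.

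For \ref{tdtnc601-1.4} you claim the statement is ``an immediate integral-representation consequence via the Laplace transform.'' That argument is not immediate: to show $(\lambda I+A_{V,\beta})^{-d}=\frac{1}{(d-1)!}\int_0^\infty t^{d-1}e^{-\lambda t}S^{V,\beta}_t\,dt$ converges as a Bochner integral in $C(\overline\Omega)$ you need a quantitative bound on $\|S^{V,\beta}_t\|_{2\to\infty}$ near $t=0$, e.g.\ the ultracontractivity estimate $\|S^{V,\beta}_t\|_{2\to\infty}\lesssim t^{-d/4}$; \ref{tdtnc601-1.2} alone gives only that each operator maps into $C(\overline\Omega)$, not a rate. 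The paper avoids this entirely by iterating the purely elliptic regularity results \cite{Nit4} Lemmas~3.11 and 3.10 on the resolvent.

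For \ref{tdtnc601-4.5} there are two issues. First, statement \ref{tdtnc601-1.4} is not what one needs for resolvent positivity of the part $A_{V,\beta,c}$: you need a \emph{single} application of the resolvent to map $C(\overline\Omega)$ into $\dom(A_{V,\beta,c})$, which requires a direct elliptic regularity statement in the spirit of Lemma~\ref{ldtnc231} (the paper obtains exactly this via the analogue of Lemma~\ref{ldtnc214}\ref{ldtnc214-1}, again through \cite{Nit4} Theorem~3.14(ii)), not the $d$-th power estimate. Second and more substantially, you propose to prove density of $\dom(A_{V,\beta,c})$ by re-running the domain-approximation machinery of Section~\ref{Sdtnc3} in the Robin setting; this is a reasonable alternative blueprint, but it is not carried out and is non-trivial, whereas the paper simply invokes the density argument from the proof of \cite{Nit4} Theorem~4.3 (with the explicit caveat that the $C_0$-semigroup claim there has a gap but the density part stands). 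If you do want a self-contained density proof, the analogue of Lemma~\ref{ldtnc209} with Robin rather than Dirichlet boundary data on the approximating domains is the step that would need a genuine new estimate; it is not an automatic corollary of the Dirichlet version.
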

\begin{proof}
`\ref{tdtnc601-1}'. 
This follows as in the proof of \cite{AE1} Theorem~4.9.
The positivity of $\beta$ is not needed in that proof.

`\ref{tdtnc601-1.2}'. 
This follows from \cite{Nit4} Theorem~3.14(ii) and 
Theorem~\ref{tdtnc240}\ref{tdtnc240-2}$\Rightarrow$\ref{tdtnc240-3}.

`\ref{tdtnc601-1.4}'. 
This follows from \cite{Nit4} Lemmas 3.11 and 3.10.

`\ref{tdtnc601-2}'. 
This is a consequence of Statement~\ref{tdtnc601-1.2} and Theorem~\ref{tdtnc240}.

`\ref{tdtnc601-3}'. 
Easy.

`\ref{tdtnc601-4}'. 
This is a consequence of \cite{Ouh5} Corollary~2.11.

`\ref{tdtnc601-4.2}'.
See Lemma~\ref{ldtnc528}\ref{ldtnc528-4}.
 
`\ref{tdtnc601-4.5}'. 
Define the part $A_{V,\beta,c}$ of $A_{V,\beta}$ in $C(\overline \Omega)$ by
\[
\dom(A_{V,\beta,c})
= \{ u \in C(\overline \Omega) \cap \dom(A_{V,\beta}) : 
         A_{V,\beta} u \in C(\overline \Omega) \}
\]
and $A_{V,\beta,c} u = A_{V,\beta} u$ for all $u \in \dom(A_{V,\beta,c})$.
Then $\dom(A_{V,\beta,c})$ is dense in $C(\overline \Omega)$ by 
the arguments in the proof of Theorem~4.3 in \cite{Nit4}.
(Remark, unfortunately there is a gap in the proof of Theorem~4.3 in \cite{Nit4}
for the part that the restriction $(S^{V,\beta}_t|_{C(\overline \Omega)})_{t > 0}$
of the Robin semigroup in $C(\overline \Omega)$ is a $C_0$-semigroup,
since it is unclear whether 
$\sup_{t \in (0,1]} \|S^{V,\beta}_t\|_{\infty \to \infty} < \infty$.
He used that the semigroup $S^{V,\beta}$ has a kernel with Gaussian bounds, which 
is only known in case $\beta \geq 0$.)

Let $\omega \in \Ri$ be as in Statement~\ref{tdtnc601-1.4}.
Let $\lambda > \omega$.
Then the operator $\lambda \,I + A_{V,\beta,c}$
is invertible by the same argument as in the proof of 
Lemma~\ref{ldtnc214}\ref{ldtnc214-1}.
Since the resolvent operator $(\lambda \,I + A_{V,\beta})^{-1}$ is 
positive on $L_2(\Omega)$, also the resolvent operator 
$(\lambda \,I + A_{V,\beta,c})^{-1}$ is 
positive on $C(\overline \Omega)$.
Moreover, the positive cone in $C(\overline \Omega)$ has a non-empty interior.
Hence $-A_{V,\beta,c}$ is the generator of a $C_0$-semigroup by 
\cite{Are8} Corollary~2.3.

`\ref{tdtnc601-5}' and `\ref{tdtnc601-6}'. 
This follows from Corollary~\ref{cdtnc522.5}.

`\ref{tdtnc601-7}'.
The proof is similarly to the proof of Corollary~\ref{cdtnc404}.
\end{proof}

\begin{remark} \label{rdtnc602}
In order to avoid confusion with the assumptions and notation in 
the rest of this paper we continued to assume in this section that the coefficients
are symmetric and that there are no first-order terms.
One can, however, consider the full Robin form 
$\gota \colon H^1(\Omega) \times H^1(\Omega) \to \Ci$ given by
\[
\gota(u,v) 
= \sum_{k,l=1}^d \int_\Omega a_{kl} \, (\partial_k u) \, \overline{\partial_l v}
   + \sum_{k=1}^d \int_\Omega b_k \, (\partial_k u) \, \overline v
   + \sum_{k=1}^d \int_\Omega c_k \, u \, \overline{\partial_k v}
   + \int_\Omega c_0 \, u \, \overline v
   + \int_\Gamma \beta \, \Tr u \, \overline{\Tr v}
,  \]
where $a_{kl}, b_k, c_k, c_0 \in L_\infty(\Omega,\Ri)$ and 
$\beta \in L_\infty(\Gamma,\Ri)$, together with the 
ellipticity condition~(\ref{eSdtnc2;1}).
We do not assume any longer that the $a_{kl}$ are symmetric.
Let $A$ be the m-sectorial operator associated with $\gota$ 
and let $S$ be the semigroup generated by $-A$ on $L_2(\Omega)$.
Then Statements~\ref{tdtnc601-1}, \ref{tdtnc601-1.2}, 
\ref{tdtnc601-1.4}, \ref{tdtnc601-2},
\ref{tdtnc601-3}, \ref{tdtnc601-4}, 
\ref{tdtnc601-4.5} and \ref{tdtnc601-7} are still valid, with the 
same proof.
Instead of Statement~\ref{tdtnc601-4.2} one can consider
$\lambda_1 = \inf \{ \RRe \lambda : \lambda \in \sigma(A) \} $.
Then $\lambda_1 \in \sigma(A)$ by \cite{ABHN} Proposition~3.11.2 and 
it follows as before that $\lambda_1$ is a simple eigenvalue.
If $A$ is symmetric, then also Statement~\ref{tdtnc601-6}
is valid.

We do not know whether Statement~\ref{tdtnc601-5} is
still valid if $A$ is not symmetric.
We also do not know whether Statement~\ref{tdtnc601-7} is valid if 
$b_k, c_k, c_0 \in L_\infty(\Omega)$ and 
$\beta \in L_\infty(\Gamma)$ are complex valued.
\end{remark}

\section{Strictly positive first eigenfunction and extensions to $L_p(\Gamma)$} \label{Sdtnc7}

In this section we consider the case where the semigroup generated by $-D_V$
is positive (in the Banach lattice sense) and under the condition 
that $\Omega$ is connected we show that the first eigenfunction
is strictly positive.
We deduce from this that the Dirichlet-to-Neumann semigroup is 
irreducible on $C(\Gamma)$.
This is surprising since we merely assume that $\Omega$ is connected. 
For example, if $\Omega$ is an annulus, then $\Gamma$ is not connected.
The result also allows us to extend the semigroup $S^V$ consistently 
to a $C_0$-semigroup on $L_p(\Gamma)$ for all $p \in [1,\infty)$.

We adopt the assumptions and notation as in Section~\ref{Sdtnc2}.
In particular, 
for all $k,l \in \{ 1,\ldots,d \} $ let $a_{kl} \in L_\infty(\Omega,\Ri)$.
Let $V \in L_\infty(\Omega,\Ri)$.
We suppose that {\rm (\ref{eSdtnc2;3})}, {\rm (\ref{eSdtnc2;1})} and {\rm (\ref{eSdtnc2;2})}
are valid.
In addition we assume that $\Omega$ is connected and that 
$A^D + V$ is positive (in the Hilbert space sense).
Then $S^V$ is a positive semigroup by Proposition~\ref{pdtnc213}\ref{pdtnc213-1}.
Moreover, $S^V_t L_2(\Gamma) \subset C(\Gamma)$ for all $t > 0$
by Proposition~\ref{pdtnc232} and $D_V$ is self-adjoint with 
compact resolvent.
So all eigenfunctions of $D_V$ are elements of $C(\Gamma)$.
Let $\lambda_1 = \min \sigma(D_V)$.
Let $\varphi_1 \in C(\Gamma)$ be an eigenfunction with eigenvalue $\lambda_1$
such that $\varphi_1 \geq 0$.

\begin{thm} \label{tdtnc701}
Adopt the above notation and assumptions.
Then $\min \varphi_1 > 0$.
\end{thm}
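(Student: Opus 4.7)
The plan is to realize $\varphi_1$ as the trace of a strictly positive first eigenfunction of a suitable Robin operator and then invoke Theorem~\ref{tdtnc601}. By Lemma~\ref{ldtnc230} choose $u_1 \in H^1(\Omega)$ with $\Tr u_1 = \varphi_1$, $(\ca+V)u_1 = 0$ and $\partial_\nu u_1 = \lambda_1 \varphi_1$. Setting $\beta_0 := -\lambda_1 \in L_\infty(\Gamma,\Ri)$, the Robin condition $\partial_\nu u_1 + \beta_0 \Tr u_1 = 0$ is satisfied, so $u_1 \in \dom(A_{V,\beta_0})$ and $A_{V,\beta_0} u_1 = 0$. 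Moreover $u_1 \neq 0$: else $\varphi_1 = \Tr u_1 = 0$, which is impossible for an eigenfunction of $D_V$.

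The crucial step is to prove $0 = \min \sigma(A_{V,\beta_0})$, equivalently that the form $\gota_{V,\beta_0}$ is nonnegative on $H^1(\Omega)$. Since $0 \notin \sigma(A^D + V)$, every $u \in H^1(\Omega)$ admits a unique decomposition $u = u_0 + u_h$ with $u_0 \in H^1_0(\Omega)$ and $u_h$ the $(\ca+V)$-harmonic lift of $\Tr u$, i.e.\ $\gota_V(u_h,v) = 0$ for all $v \in H^1_0(\Omega)$. Combining this orthogonality relation with the symmetry of $\gota_V$ makes the cross terms vanish, and since $\Tr u_0 = 0$ one obtains
\[
\gota_{V,\beta_0}(u,u)
= \gota_V(u_0,u_0) + \Big( \gota_V(u_h,u_h) - \lambda_1 \, \|\Tr u_h\|_{L_2(\Gamma)}^2 \Big).
\]
The first summand is $\geq 0$ since $A^D + V$ is positive in the Hilbert space sense; the second is $\geq 0$ because $\gota_V(u_h,u_h)$ is the value at $\Tr u_h$ of the quadratic form associated with the lower-bounded self-adjoint operator $D_V$, while $\lambda_1 = \min \sigma(D_V)$. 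Together with $A_{V,\beta_0} u_1 = 0$, this forces $\min \sigma(A_{V,\beta_0}) = 0$.

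Now apply Theorem~\ref{tdtnc601} with $\beta = \beta_0$: by \ref{tdtnc601-4.2} the eigenvalue $0$ is simple, and by \ref{tdtnc601-6} there exists a nonnegative eigenfunction $w \in C(\overline\Omega)$ at this eigenvalue together with $\delta > 0$ with $w(x) \geq \delta$ for all $x \in \overline\Omega$. By simplicity, $u_1 = c \, w$ for some $c \in \Ri$. Since $\Tr w \geq \delta > 0$ pointwise and $\Tr u_1 = \varphi_1 \geq 0$ with $\varphi_1 \neq 0$, the constant $c$ must be positive, whence $\varphi_1 = c \, \Tr w \geq c \, \delta > 0$ on $\Gamma$.

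The principal obstacle I anticipate is the second step: the nonnegativity of $\gota_{V,\beta_0}$ on all of $H^1(\Omega)$. Both the symmetry of the coefficients (to kill the cross terms in the Dirichlet/harmonic decomposition) and the standing invertibility assumption $0 \notin \sigma(A^D + V)$ (to produce the harmonic lift) are essential here; once this identity is available, the conclusion drops out of the strictly positive first eigenfunction for the Robin problem provided by Section~\ref{Sdtnc6}.
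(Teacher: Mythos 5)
Your proposal is correct, and the overall architecture coincides with the paper's: both realize $\varphi_1$ as the trace of a zero eigenfunction of the Robin operator $A_{V,\beta}$ with $\beta = -\lambda_1$, prove that $0 = \min\sigma(A_{V,\beta})$, and then invoke Theorem~\ref{tdtnc601}\ref{tdtnc601-4.2} and \ref{tdtnc601-6} (simplicity of the first Robin eigenvalue together with uniform strict positivity of its eigenfunction) to deduce $\varphi_1 \geq c\,\delta > 0$. The difference lies entirely in how $\min\sigma(A_{V,\beta}) = 0$ is established. The paper (Proposition~\ref{pdtnc702}) first shows that the $(\ca+V)$-harmonic lift $u$ of $\varphi_1$ is nonnegative, by testing against $u^- \in H^1_0(\Omega)$ and using the positivity and injectivity of $A^D+V$, and then appeals to an inverse Krein--Rutman theorem (\cite{AE6} Lemma~5.14): a positive irreducible self-adjoint semigroup whose generator has a nonnegative eigenvector at some eigenvalue must have that eigenvalue as the bottom of the spectrum. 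You instead give a direct variational argument: decomposing $u = u_0 + u_h$ with $u_0 \in H^1_0(\Omega)$ and $u_h$ the harmonic lift of $\Tr u$, the symmetry of $\gota_V$ kills the cross terms, and
\[
\gota_{V,\beta}(u,u) = \gota_V(u_0,u_0) + \bigl(\gota_V(u_h,u_h) - \lambda_1\|\Tr u\|_{L_2(\Gamma)}^2\bigr) \geq 0,
\]
the first term being nonnegative since $A^D+V \geq 0$ and the second since $\varphi \mapsto \gota_V(u_h^\varphi,u_h^\varphi)$ is precisely the closed form associated with the self-adjoint operator $D_V$ on $\Tr H^1(\Omega)$ and $\lambda_1 = \min\sigma(D_V)$. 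What your route buys is a self-contained, purely form-theoretic proof of the key spectral identity that avoids both the $u^-$ computation and the Krein--Rutman machinery; the nonnegativity of $u$, which the paper proves as a preliminary step, instead drops out at the very end once $u = c\,u_1$ with $c > 0$. What the paper's route buys is that it runs through properties of the Robin semigroup that have already been established in Section~\ref{Sdtnc6} and that it does not require one to identify the form domain of $D_V$; your argument implicitly uses that $\Tr H^1(\Omega)$ is the form domain and that the form is $\varphi \mapsto \gota_V(u_h^\varphi)$, which is true in the framework of \cite{AEKS}, but is a point worth flagging explicitly in a final write-up.
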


The theorem is an immediate consequence of the next proposition.

\begin{prop} \label{pdtnc702}
Let $\beta \in \Ri$ and $\varphi \in \dom(D_V)$ be 
and eigenfunction of $D_V$ with eigenvalue $-\beta$.
Suppose that $\varphi \geq 0$. 
We identify the real number $\beta$ with the constant function $\beta \, \one_\Gamma$
on $\Gamma$.
Consider the Robin operator $A_{V,\beta}$ as in Section~\ref{Sdtnc6}.
Then $\min \sigma(A_{V,\beta}) = 0$.
Let $u_1 \in C(\overline \Omega)$ be an eigenfunction
of $A_{V,\beta}$ with eigenvalue $0$
as in Theorem~\ref{tdtnc601}\ref{tdtnc601-6}.
Then there exists a $c > 0$ such that 
$\varphi = c \, u_1|_\Gamma$.
In particular, 
$\dim \spann \{ \psi \in \ker(\beta \, I + D_V) : \psi \geq 0 \} = 1$
and $\varphi(z) > 0$ for all $z \in \Gamma$.
\end{prop}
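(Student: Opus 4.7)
First I would use Lemma~\ref{ldtnc230} to produce $u\in H^1(\Omega)$ satisfying $(\ca+V)u=0$, $\Tr u=\varphi$ and $\partial_\nu u=D_V\varphi=-\beta\varphi=-\beta\,\Tr u$. The identity $\partial_\nu u+\beta\,\Tr u=0$ then places $u$ in $\dom(A_{V,\beta})$ with $A_{V,\beta}u=0$, and $u\neq 0$ because $\Tr u=\varphi\neq 0$. In particular $0\in\sigma(A_{V,\beta})$, so $\min\sigma(A_{V,\beta})\leq 0$.

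The main obstacle is to upgrade the boundary inequality $\Tr u=\varphi\geq 0$ to the interior inequality $u\geq 0$ on $\Omega$. For this I would test the weak equation $\gota_V(u,v)=0$, valid for every $v\in H^1_0(\Omega)$, against $v=u^-$, which lies in $H^1_0(\Omega)$ because $\Tr u^-=(\Tr u)^-=0$. The standard disjoint-support identities $u^+ u^-=0$ and $(\partial_k u^+)(\partial_l u^-)=0$ almost everywhere give $\gota_V(u^+,u^-)=0$, hence $\gota_V(u^-,u^-)=0$. Because $A^D+V\geq 0$ and $0\notin\sigma(A^D+V)$, the first Dirichlet eigenvalue $\lambda_1^D$ is strictly positive and $\gota_V(w,w)\geq\lambda_1^D\|w\|_{L_2(\Omega)}^2$ for all $w\in H^1_0(\Omega)$. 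Applied to $w=u^-$ this forces $u^-=0$, so $u\geq 0$ on $\Omega$.

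It then remains to compare $u$ with the Robin principal eigenfunction. By Theorem~\ref{tdtnc601}\ref{tdtnc601-6} one has a strictly positive eigenfunction $u_1\geq\delta>0$ on $\overline{\Omega}$ with eigenvalue $\lambda_1^\beta:=\min\sigma(A_{V,\beta})$. If $\lambda_1^\beta<0$, then self-adjointness of $A_{V,\beta}$ forces $(u,u_1)_{L_2(\Omega)}=0$; but $u\geq 0$, $u_1\geq\delta$ and $u\not\equiv 0$ give $(u,u_1)_{L_2(\Omega)}>0$, a contradiction. Hence $\lambda_1^\beta=0$, and the simplicity of this eigenvalue (Theorem~\ref{tdtnc601}\ref{tdtnc601-4.2}) yields $u=c\,u_1$ for some scalar $c$, necessarily $c>0$ since $u\geq 0$ and $u\not\equiv 0$. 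Taking traces gives $\varphi=c\,u_1|_\Gamma\geq c\delta>0$ pointwise on $\Gamma$. Applying the same chain of reasoning to any other $\psi\in\ker(\beta I+D_V)$ with $\psi\geq 0$ produces a positive scalar multiple of $u_1|_\Gamma$, so the span of the nonnegative elements of $\ker(\beta I+D_V)$ is one-dimensional.
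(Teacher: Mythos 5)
Your argument is correct and, except for one step, follows the same route as the paper: both proofs lift $\varphi$ to a solution $u$ of $(\ca+V)u=0$ with $\partial_\nu u+\beta\,\Tr u=0$, test the weak equation against $u^-\in H^1_0(\Omega)$, use the disjoint-support identities together with the strict positivity of $A^D+V$ on $H^1_0(\Omega)$ to get $u\geq 0$, and then invoke the simplicity of $\min\sigma(A_{V,\beta})$ from Theorem~\ref{tdtnc601}\ref{tdtnc601-4.2} to conclude $u=c\,u_1$ with $c>0$. The one place where you genuinely diverge is the identification $\min\sigma(A_{V,\beta})=0$: the paper deduces this from an ``inverse Krein--Rutman'' lemma (\cite{AE6} Lemma~5.14), using that $-A_{V,\beta}$ is self-adjoint with compact resolvent and generates a positive \emph{irreducible} semigroup, so that a nonnegative nonzero eigenfunction can only belong to the bottom of the spectrum. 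You instead argue by orthogonality: if $\min\sigma(A_{V,\beta})<0$ then the principal eigenfunction $u_1\geq\delta>0$ of Theorem~\ref{tdtnc601}\ref{tdtnc601-6} would be $L_2$-orthogonal to your nonnegative, nonzero eigenfunction $u$ for the eigenvalue $0$, contradicting $(u,u_1)_{L_2(\Omega)}\geq\delta\int_\Omega u>0$. Both arguments ultimately rest on the machinery of Section~\ref{Sdtnc6}; yours trades the abstract irreducibility lemma for the pointwise lower bound on $u_1$, which makes the step more elementary and self-contained, at the cost of using the stronger conclusion \ref{tdtnc601-6} rather than only irreducibility on $L_2(\Omega)$. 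A minor bookkeeping point: with the paper's conventions one actually gets $\gota_V(u^-,u^-)=-\gota_V(u,u^-)=0$ rather than $\gota_V(u^+,u^-)-\gota_V(u,u^-)$ with your sign, but either way the quantity vanishes and the conclusion $u^-=0$ is unaffected.
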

\begin{proof}
By definition of $D_V$ there exists a $u \in H^1(\Omega)$ such that 
$\Tr u = \varphi$ and 
\begin{equation}
\gota_V(u,v) = - \beta \, (\varphi, \Tr v)_{L_2(\Gamma)}
\label{epdtnc702;1}
\end{equation}
for all $v \in H^1(\Omega)$.
Since $\varphi \geq 0$ it follows that $u$ is real valued and 
$u^- \in H^1_0(\Omega)$.
Choose $v = u^-$.
Then $\gota_V(u,u^-) = 0$.
But $\partial_k (u^-) = - (\partial_k u) \, \one_{[u < 0]}$
for all $k \in \{ 1,\ldots,d \} $.
Therefore $\gota_V(u^-,u^-) = \gota_V(u,u^-) = 0$.
Since $A^D + V$ is a positive operator in the Hilbert space sense 
with trivial kernel by assumption (\ref{eSdtnc2;2}),
it follows that $u^- = 0$.
Therefore $u \geq 0$ and clearly $u \neq 0$.
It follows from (\ref{epdtnc702;1}) that $\gota_{V,\beta}(u,v) = 0$
for all $v \in H^1(\Omega)$.
Therefore $u \in \dom(A_{V,\beta})$ and $A_{V,\beta} u = 0$.
The operator $-A_{V,\beta}$ is self-adjoint, has compact resolvent 
and generates a positive irreducible semigroup in $L_2(\Omega)$.
Hence it follows from the inverse Krein--Rutman theorem
\cite{AE6} Lemma~5.14 that $0 = \min \sigma(A_{V,\beta})$.

Since $\min \sigma(A_{V,\beta})$ is a simple eigenvalue of $A_{V,\beta}$
by Theorem~\ref{tdtnc601}\ref{tdtnc601-4.2}, 
it follows that 
there exists a $c \in \Ci \setminus \{ 0 \} $ such that $u = c \, u_1$.
But both $u_1,u \geq 0$.
Therefore $c > 0$.
Then $\varphi = \Tr u = c \, \Tr u_1$.
Since $u_1(x) > 0$ for all $x \in \overline \Omega$ by 
Theorem~\ref{tdtnc601}\ref{tdtnc601-6}, obviously $\varphi(z) > 0$
for all $z \in \Gamma$.
\end{proof}

Recall that $T^V$ is the restriction of the Dirichlet-to-Neumann semigroup $S^V$
to $C(\Gamma)$.
We show below that $T^V$ is irreducible.
We cannot deduce this in general from the strict positivity of 
the first eigenfunction via Proposition~\ref{pdtnc522},
since $\Gamma$ is not connected in general.

Irreducibility of $S^V$ in $L_2(\Gamma)$ is much easier.
We need the following result.

\begin{prop} \label{pdtnc703}
Let $(Y,\Sigma,\mu)$ be a finite measure space.
Let $B$ be a lower bounded self-adjoint operator in $L_2(Y)$
and suppose that $-B$ generates a 
positive $C_0$-semigroup $S$ on $L_2(Y)$.
Let $\varphi \in L_2(Y)$ and suppose that $\varphi(y) > 0$
for almost every $y \in Y$.
Further suppose that $S_t \varphi = \varphi$ for all $t > 0$
and that $\dim \spann \{ \psi \in \ker B : \psi \geq 0 \} = 1$.
Then $S$ is irreducible.
\end{prop}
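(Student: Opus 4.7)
The plan is to argue by contradiction, exploiting the fact that a nontrivial closed ideal in $L_2(Y)$ together with its orthogonal complement splits the space into two $S$-invariant pieces, each of which captures a nonzero, nonnegative fixed point of the semigroup.

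First I would suppose that $I \subset L_2(Y)$ is a closed $S$-invariant ideal with $I \neq \{0\}$ and $I \neq L_2(Y)$. By the characterisation of closed ideals in $L_2$ quoted earlier in the paper, there is a measurable set $Z \subset Y$ with $I = \{f \in L_2(Y) : f|_Z = 0 \text{ a.e.}\}$, and the assumptions $I \neq \{0\}$ and $I \neq L_2(Y)$ force $\mu(Z) > 0$ and $\mu(Y \setminus Z) > 0$.

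The next step is to observe that the orthogonal complement $I^\perp$ is also a closed ideal, namely $I^\perp = \{f \in L_2(Y) : f|_{Y \setminus Z} = 0 \text{ a.e.}\}$, and that it is $S$-invariant. Invariance is where self-adjointness of $B$ enters: since $S_t = S_t^*$ for all $t > 0$, if $f \in I^\perp$ and $g \in I$ then $(S_t f, g)_{L_2(Y)} = (f, S_t g)_{L_2(Y)} = 0$, because $S_t g \in I$. Hence $S_t I^\perp \subset I^\perp$.

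Now decompose the fixed point as $\varphi = \varphi_1 + \varphi_2$ where $\varphi_1 = \varphi \cdot \one_{Y \setminus Z} \in I$ and $\varphi_2 = \varphi \cdot \one_Z \in I^\perp$. Since $\varphi > 0$ almost everywhere and both $Z$ and $Y \setminus Z$ have positive measure, both $\varphi_1$ and $\varphi_2$ are nonzero nonnegative elements of $L_2(Y)$. Invariance of $I$ and $I^\perp$ gives $S_t \varphi_1 \in I$ and $S_t \varphi_2 \in I^\perp$, while $S_t\varphi_1 + S_t\varphi_2 = S_t\varphi = \varphi = \varphi_1 + \varphi_2$. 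By uniqueness of the orthogonal decomposition, $S_t \varphi_i = \varphi_i$ for $i = 1,2$ and all $t > 0$. Therefore $\varphi_1, \varphi_2 \in \ker B$, both nonzero and nonnegative.

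The final step is the contradiction. The hypothesis $\dim \spann \{\psi \in \ker B : \psi \geq 0\} = 1$ means $\varphi_1$ and $\varphi_2$ are both positive scalar multiples of a single nonnegative $\psi_0 \in \ker B$. But $\varphi_1$ vanishes on $Z$ while $\varphi_2$ vanishes on $Y \setminus Z$, which would force $\psi_0 = 0$ almost everywhere, contradicting that $\varphi_1 \neq 0$ (equivalently $\varphi_2 \neq 0$). Hence no such nontrivial invariant ideal exists, and $S$ is irreducible. The only subtle point is the self-adjointness argument showing $I^\perp$ is invariant; everything else is bookkeeping with the ideal structure on $L_2$.
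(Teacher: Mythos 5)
Your proposal is correct and follows essentially the same route as the paper: decompose $Y$ into the two pieces determined by the invariant ideal, use self-adjointness to propagate invariance to the complementary ideal, split the strictly positive fixed point $\varphi$ along the decomposition to get two nonzero nonnegative elements of $\ker B$ with disjoint supports, and contradict the one-dimensionality of $\spann\{\psi \in \ker B : \psi \geq 0\}$. The only cosmetic difference is that the paper phrases the invariant ideal as an $L_2(Y_1)$-subspace from the start rather than introducing the defining set $Z$; the substance is identical.
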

\begin{proof}
The proof is a variation of the proof of Proposition~2.2 in \cite{AE3}.
Let $Y_1$ be a measurable subset of $Y$ and 
suppose that $S_t L_2(Y_1) \subset L_2(Y_1)$ for all $t > 0$.
Set $Y_2 = Y \setminus Y_1$.
Since $S_t$ is self-adjoint one deduces that 
$S_t L_2(Y_2) \subset L_2(Y_2)$ for all $t > 0$.
Let $t > 0$.
Then 
\[
\varphi \, \one_{Y_1} + \varphi \, \one_{Y_2}
= \varphi
= S_t \varphi
= S_t (\varphi \, \one_{Y_1}) + S_t (\varphi \, \one_{Y_2})
. \]
Since $S_t$ leaves $L_2(Y_1)$ and $L_2(Y_2)$ invariant, it follows that 
$S_t (\varphi \, \one_{Y_1}) = \varphi \, \one_{Y_1}$ and 
$S_t (\varphi \, \one_{Y_2}) = \varphi \, \one_{Y_2}$.
So $\varphi \, \one_{Y_1} \in \ker B$ and $\varphi \, \one_{Y_2} \in \ker B$.
Since $\dim \spann \{ \psi \in \ker B : \psi \geq 0 \} = 1$ one deduces that 
$\varphi \, \one_{Y_1} = 0$ or $\varphi \, \one_{Y_2} = 0$.
Therefore $\mu(Y_1) = 0$ or $\mu(Y_2) = 0$.
\end{proof}

\begin{prop} \label{pdtnc704}
The semigroup $S^V$ is irreducible on $L_2(\Gamma)$ and 
$\min \sigma(D_V)$ is a simple eigenvalue.
\end{prop}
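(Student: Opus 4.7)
The plan is to deduce irreducibility on $L_2(\Gamma)$ from Proposition~\ref{pdtnc703} after a suitable spectral shift, and then to read off the simplicity of the first eigenvalue from Lemma~\ref{ldtnc528}\ref{ldtnc528-4}.

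First I would set $\lambda_1 = \min \sigma(D_V)$ and recall that by Theorem~\ref{tdtnc701} the eigenfunction $\varphi_1 \in C(\Gamma)$ with $D_V \varphi_1 = \lambda_1 \varphi_1$ satisfies $\varphi_1(z) > 0$ for every $z \in \Gamma$.  In particular $\varphi_1 > 0$ almost everywhere with respect to the $(d-1)$-dimensional Hausdorff measure on $\Gamma$.  Next I would set $B = D_V - \lambda_1 I$.  Then $B$ is self-adjoint and lower bounded, and $-B$ generates the rescaled $C_0$-semigroup $\tilde S = (e^{\lambda_1 t} S^V_t)_{t > 0}$, which is positive because $S^V$ is positive by Proposition~\ref{pdtnc213}\ref{pdtnc213-1}.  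By construction $\tilde S_t \varphi_1 = \varphi_1$ for all $t > 0$.

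To verify the remaining hypothesis of Proposition~\ref{pdtnc703}, I would apply Proposition~\ref{pdtnc702} with $\beta = -\lambda_1$ and $\varphi = \varphi_1$, which yields
\[
\dim \spann \{ \psi \in \ker B : \psi \geq 0 \} = 1 .
\]
Proposition~\ref{pdtnc703}, applied to the measure space $(\Gamma,\Sigma,\mu)$ (with $\mu$ the finite measure on $\Gamma$) and to $\tilde S$, then implies that $\tilde S$ is irreducible on $L_2(\Gamma)$.  Since the closed invariant ideals of $S^V$ and $\tilde S$ coincide (the two semigroups differ only by a strictly positive scalar factor at each time $t > 0$), $S^V$ itself is irreducible on $L_2(\Gamma)$.

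Finally, simplicity of $\min \sigma(D_V)$ is immediate from Lemma~\ref{ldtnc528}\ref{ldtnc528-4}: the generator $-D_V$ is self-adjoint, the operators $S^V_t$ have continuous (hence bounded) kernels by Corollary~\ref{cdtnc330}, and $S^V$ is positive and, as just shown, irreducible.  The genuine work in this proposition is done already by Theorem~\ref{tdtnc701} and Proposition~\ref{pdtnc702}; the remaining step is essentially a bookkeeping assembly of the abstract results of Section~\ref{Sdtnc5}, so no serious obstacle is expected.
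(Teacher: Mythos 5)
Your proof is correct and follows essentially the same route as the paper's: shift to $B = D_V - \lambda_1 I$ so that $\varphi_1$ becomes a fixed point of the positive $C_0$-semigroup generated by $-B$, verify the hypotheses of Proposition~\ref{pdtnc703} via Theorem~\ref{tdtnc701} and Proposition~\ref{pdtnc702}, and obtain simplicity from Lemma~\ref{ldtnc528}\ref{ldtnc528-4}. The only cosmetic difference is that you spell out the (correct) observation that $\tilde S$ and $S^V$ share the same closed invariant ideals, whereas the paper leaves that step implicit.
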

\begin{proof}
It follows from Proposition~\ref{pdtnc702} that 
$\varphi_1(z) > 0$ for all $z \in \Gamma$ and 
$\dim \spann \{ \psi \in \ker(D_V - \lambda_1 \, I) : \psi \geq 0 \} = 1$.
Apply Proposition~\ref{pdtnc703} to the operator $D_V - \lambda_1 \, I$.
One deduces that $S^V$ is irreducible.
Then the eigenvalue $\min \sigma(D_V)$ is simple by Lemma~\ref{ldtnc528}\ref{ldtnc528-4}.
\end{proof}

Now we prove the irreducibility in $C(\Gamma)$.

\begin{thm} \label{tdtnc705}
The semigroup $T^V$ is irreducible on $C(\Gamma)$.
\end{thm}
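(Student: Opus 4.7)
The plan is to argue by contradiction and show that no ideal
$I_B = \{f \in C(\Gamma) : f|_B = 0\}$, with $B \subset \Gamma$ closed and
$\emptyset \neq B \neq \Gamma$, can be $T^V$-invariant. The tools are the
continuous kernel $k_t$ of $S^V_t$ from Corollary~\ref{cdtnc330}, the
positivity of $S^V$ from Proposition~\ref{pdtnc213}\ref{pdtnc213-1}, the
strict positivity of the first eigenfunction $\varphi_1$ of $D_V$ from
Theorem~\ref{tdtnc701}, and the $L_2$-irreducibility of $S^V$ from
Proposition~\ref{pdtnc704}. Note that Proposition~\ref{pdtnc522} is not
directly applicable, since $\Gamma$ need not be connected; the argument
must also handle the case in which $B$ is a union of connected components
of $\Gamma$.

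First I would feed the distance function $g(y) = d(y, B)$ into the assumed
invariance. This $g$ lies in $I_B$, is non-negative, and is strictly
positive on the open set $\Gamma \setminus B$. Since $T^V_t g \in I_B$, the
kernel representation gives, for every $x \in B$ and $t > 0$,
\[
0 = (S^V_t g)(x) = \int_\Gamma k_t(x, y) \, d(y, B) \, d\mu(y).
\]
As $k_t \geq 0$, the integrand must vanish $\mu$-a.e., forcing
$k_t(x, y) = 0$ for $\mu$-a.e.\ $y \in \Gamma \setminus B$. The joint
continuity of $k_t$, combined with the fact that the $(d-1)$-dimensional
Hausdorff measure has full support on the Lipschitz boundary $\Gamma$,
upgrades this to $k_t(x, y) = 0$ for every $y \in \Gamma \setminus B$.

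With this pointwise kernel vanishing in hand, the closed $L_2$-ideal
$J = \{f \in L_2(\Gamma) : f|_B = 0 \mbox{ a.e.}\}$ is automatically
$S^V$-invariant: for $f \in J$ and $x \in B$, the integrand
$k_t(x, y) \, f(y)$ vanishes on both $B$ (where $f = 0$) and on
$\Gamma \setminus B$ (where $k_t(x, \cdot) = 0$). By
Proposition~\ref{pdtnc704}, $J$ is either $\{0\}$ or $L_2(\Gamma)$. In the
first case, $\mu(\Gamma \setminus B) = 0$; but $\Gamma \setminus B$ is open
and non-empty and $\mu$ has full support, so $B = \Gamma$, a contradiction.
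In the second case, $\mu(B) = 0$, and I would apply the kernel
representation to $\varphi_1$. Both $\varphi_1$ and
$S^V_t \varphi_1 = e^{-\lambda_1 t} \varphi_1$ are continuous, so
\[
e^{-\lambda_1 t} \varphi_1(x) = \int_\Gamma k_t(x, y) \, \varphi_1(y) \, d\mu(y)
\]
holds pointwise. For $x \in B$ the integral splits as one over
$\Gamma \setminus B$ (zero because $k_t(x, \cdot)$ vanishes there) plus
one over $B$ (zero because $\mu(B) = 0$), giving $\varphi_1|_B = 0$. Since
$\varphi_1 > 0$ on $\Gamma$ by Theorem~\ref{tdtnc701} and $B \neq \emptyset$,
this is a contradiction.

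The main obstacle will be upgrading the $\mu$-a.e.\ vanishing of $k_t$ on
$B \times (\Gamma \setminus B)$ to a genuinely pointwise statement; without
that, both horns of the $L_2$-dichotomy could collapse. The remedy is the
continuity of $k_t$ together with the full-support property of the
Hausdorff measure on a Lipschitz surface. What is pleasant is that the
argument never invokes connectedness of $\Gamma$, so it handles precisely
the annulus-type situations that Proposition~\ref{pdtnc522} alone cannot.
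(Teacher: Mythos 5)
Your proof is correct, and it takes a mildly different route from the paper's. The paper distinguishes two cases according to whether the closed set $B$ (called $\Gamma_1$ there) is open in $\Gamma$ or not: if $B$ is not open it has a boundary point $x_0$, and the paper invokes the diagonal-kernel argument from the proof of Proposition~\ref{pdtnc522} (continuity forces $k_t(x_0,x_0)=0$, then symmetry and the semigroup property force $k_{t/2}(x_0,\cdot)=0$ a.e., so $(T^V_{t/2}u)(x_0)=0$ for all $u$, contradicting $\varphi_1(x_0)>0$); if $B$ is clopen, both $B$ and its complement have positive surface measure and the $L_2$-irreducibility of Proposition~\ref{pdtnc704} is applied directly to the ideal $J$. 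You instead give a single unified argument: from the invariance of $I_B$ applied to $d(\cdot,B)$ you extract the pointwise vanishing $k_t(x,y)=0$ for all $x\in B$ and $y\in\Gamma\setminus B$, deduce that $J$ is $S^V$-invariant, and then dispose of both horns of the $L_2$-dichotomy, one by full support of the surface measure and the other by feeding $\varphi_1$ back into the kernel representation. The ingredients are identical in both treatments (continuous positive kernel, $L_2$-irreducibility, strict positivity of $\varphi_1$ from Theorem~\ref{tdtnc701}), but your version avoids the case split and the diagonal/symmetry step entirely, which is a genuine streamlining. The only points worth stating explicitly are that $k_t\geq 0$ everywhere (positivity of $S^V$ gives $k_t\geq 0$ a.e., and continuity plus full support of $\mu$ upgrade this to all points) and that the kernel representation holds for every $x$ rather than almost every $x$ because both sides are continuous in $x$; you clearly have both facts in hand, so neither is a gap.
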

\begin{proof}
Let $\Gamma_1$ be a closed subset of $\Gamma$ with 
$\emptyset \neq \Gamma_1 \neq \Gamma$.
Set $I = \{ \varphi \in C(\Gamma) : \varphi|_{\Gamma_1} = 0 \} $.
Assume that $T^V_t I \subset I$ for all $t > 0$.
We consider two cases.

{\bf Case I. Suppose $\Gamma_1$ is not open. }
Then there exists an $x_0 \in \partial \Gamma_1$.
Then one can argue as in the proof of the implication 
\ref{pdtnc522-2}$\Rightarrow$\ref{pdtnc522-1} in the proof 
of Proposition~\ref{pdtnc522} to deduce that 
$(T^V_t u)(x_0) = 0$ for all $u \in C(\Gamma)$ and $t > 0$.
But $(T^V_t \varphi_1)(x_0) = e^{-\lambda_1 t} \, \varphi_1(x_0) > 0$
for all $t > 0$ by Theorem~\ref{tdtnc701}.
This is a contradiction.

{\bf Case II. Suppose $\Gamma_1$ is open. }
Then $\Gamma_1$ is a connected component of $\Gamma$.
Hence $\sigma(\Gamma_1) > 0$ and $\sigma(\Gamma \setminus \Gamma_1) > 0$.
Let $J = \{ \varphi \in L_2(\Gamma) : \varphi|_{\Gamma_1} = 0 \} $.
Then $J$ is the closure of $I$ in $L_2(\Gamma)$ and 
$S^V_t J \subset J$ for all $t > 0$.
Since $S^V$ is irreducible one deduces that 
$\sigma(\Gamma_1) = 0$ or $\sigma(\Gamma \setminus \Gamma_1) = 0$.
This is a contradiction.
\end{proof}

\begin{cor} \label{cdtnc707}
For all $p \in [1,\infty)$ the semigroup $S^V$ extends 
consistently to a $C_0$-semigroup on $L_p(\Gamma)$.
\end{cor}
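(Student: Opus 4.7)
The plan is to follow the argument of Proposition~\ref{pdtnc524}, bypassing its connectedness hypothesis on $K$ by invoking the strict positivity of the first eigenfunction on $\Gamma$ that has just been established in Theorem~\ref{tdtnc701}. Even though $\Gamma$ may be disconnected (e.g.\ for an annulus), which prevents a direct appeal to Proposition~\ref{pdtnc524}, the only use of the connectedness there was to extract the strict positivity of~$u_1$ via Proposition~\ref{pdtnc522}; in our setting this strict positivity is free of charge.

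Concretely, I would first let $\lambda_1 = \min \sigma(D_V)$ and take $\varphi_1 \in C(\Gamma)$ to be a nonnegative eigenfunction of $D_V$ with eigenvalue~$\lambda_1$. By Theorem~\ref{tdtnc701} there is $\delta > 0$ with $\varphi_1 \geq \delta$ on~$\Gamma$. For $0 \leq f \in L_\infty(\Gamma)$, the bound $f \leq \delta^{-1}\|f\|_\infty \, \varphi_1$ together with positivity of $S^V$ (Proposition~\ref{pdtnc213}\ref{pdtnc213-1}) yields
\[
0 \leq S^V_t f \leq \delta^{-1}\|f\|_\infty \, S^V_t \varphi_1 = \delta^{-1} e^{-\lambda_1 t}\|f\|_\infty \, \varphi_1,
\]
so $\|S^V_t f\|_\infty \leq M e^{-\lambda_1 t}\|f\|_\infty$ with $M = \delta^{-1}\|\varphi_1\|_\infty$. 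In particular $S^V_t$ extends continuously from $L_2(\Gamma) \cap L_\infty(\Gamma)$ to a bounded operator on $L_\infty(\Gamma)$.

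Since $D_V$ is self-adjoint, $S^V_t$ is self-adjoint on $L_2(\Gamma)$, and by duality the same bound $\|S^V_t f\|_1 \leq M e^{-\lambda_1 t}\|f\|_1$ holds for $f \in L_2(\Gamma)$; Riesz--Thorin interpolation then gives $\|S^V_t f\|_p \leq M e^{-\lambda_1 t}\|f\|_p$ for all $p \in [1,\infty)$ and $f \in L_2(\Gamma) \cap L_p(\Gamma)$. Since $L_2(\Gamma) \cap L_p(\Gamma)$ is dense in $L_p(\Gamma)$, each $S^V_t$ extends by continuity to a bounded operator $S^{V,(p)}_t$ on $L_p(\Gamma)$, and the semigroup law is preserved under this extension by density.

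Finally, to promote $S^{V,(p)}$ from a semigroup of uniformly bounded operators on $L_p(\Gamma)$ (on compact intervals away from $0$) to a genuine $C_0$-semigroup, I would invoke Voigt's criterion~\cite{Voi} as in Proposition~\ref{pdtnc524}: because $\Gamma$ has finite Hausdorff measure, strong continuity on a dense subset together with the local uniform bound forces $C_0$-continuity on all of $L_p(\Gamma)$. The main (very mild) obstacle is simply recording that $\Gamma$ need not be connected, so Proposition~\ref{pdtnc524} is not directly applicable; once one notices that its proof only uses the conclusion of Proposition~\ref{pdtnc522}\ref{pdtnc522-3}, Theorem~\ref{tdtnc701} substitutes for that ingredient and the argument goes through verbatim.
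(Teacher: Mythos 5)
Your argument is correct and is essentially the paper's: the paper simply cites Proposition~\ref{pdtnc524} (whose proof is exactly the chain $L_\infty$-bound via the strictly positive eigenfunction, duality, interpolation, and Voigt's criterion that you reproduce), having just established irreducibility of $T^V$ in Theorem~\ref{tdtnc705}. Your observation that the connectedness hypothesis of Proposition~\ref{pdtnc524} enters only through the strict positivity of $u_1$, which Theorem~\ref{tdtnc701} supplies directly for $\Gamma$, is a fair and slightly more careful reading of the same argument.
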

\begin{proof}
This follows from Proposition~\ref{pdtnc524}.
\end{proof}

\begin{cor} \label{cdtnc706}
Let $\varphi \in C(\Gamma)$ with $\varphi \geq 0$ and 
$\varphi \neq 0$.
Then $(T^V_t \varphi)(z) > 0$ for all $t > 0$ and $z \in \Gamma$.
\end{cor}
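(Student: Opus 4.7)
My plan is to argue by contradiction, combining three ingredients already available in this section: the continuity of the heat kernel $k_t$ of $S^V$ (Corollary~\ref{cdtnc330}), the strict positivity of the principal eigenfunction $\varphi_1$ on $\Gamma$ (Theorem~\ref{tdtnc701}), and the $L_2(\Gamma)$-irreducibility of $S^V$ (Proposition~\ref{pdtnc704}).

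Assume $(T^V_{t_0} \varphi)(z_0) = 0$ for some $t_0 > 0$ and $z_0 \in \Gamma$. Because $\varphi$ is continuous, $\varphi \geq 0$ and $\varphi \not\equiv 0$, the set $W = \{ y \in \Gamma : \varphi(y) > 0 \}$ is a non-empty open subset of $\Gamma$. The non-negative continuous integrand in
\[
\int_\Gamma k_{t_0}(z_0, y) \, \varphi(y) \, d\sigma(y) = 0
\]
must therefore vanish identically on $\Gamma$, and in particular $k_{t_0}(z_0, y) = 0$ for every $y \in W$. I next invoke the Chapman--Kolmogorov identity
\[
k_{t_0}(z_0, y) = \int_\Gamma k_{t_0/2}(z_0, z') \, k_{t_0/2}(z', y) \, d\sigma(z'),
\]
which holds pointwise by the continuity of $k_{t_0/2}$. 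For each fixed $y \in W$, the same argument forces $k_{t_0/2}(z_0, z') \, k_{t_0/2}(z', y) = 0$ for every $z' \in \Gamma$. Set $G = \{ z' \in \Gamma : k_{t_0/2}(z_0, z') > 0 \}$. Then $G$ is open, and $G \neq \emptyset$ because otherwise $(S^V_{t_0/2} \varphi_1)(z_0) = \int_\Gamma k_{t_0/2}(z_0, z') \, \varphi_1(z') \, d\sigma(z')$ would vanish, contradicting $(S^V_{t_0/2} \varphi_1)(z_0) = e^{-\lambda_1 t_0/2} \varphi_1(z_0) > 0$ (Theorem~\ref{tdtnc701}). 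Consequently $k_{t_0/2}(z', y) = 0$ for all $z' \in G$ and all $y \in W$.

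Using the symmetry $k_{t_0/2}(z', y) = k_{t_0/2}(y, z')$ (self-adjointness of $S^V$), this yields
\[
(S^V_{t_0/2} \one_W)(z') = \int_W k_{t_0/2}(z', y) \, d\sigma(y) = 0 \qquad \text{for every } z' \in G,
\]
and the measure of $G$ is strictly positive since $G$ is a non-empty open subset of $\Gamma$. On the other hand $\one_W \in L_2(\Gamma)$ is non-negative and not the zero element, and the positive, self-adjoint, holomorphic $C_0$-semigroup $S^V$ is irreducible on $L_2(\Gamma)$ by Proposition~\ref{pdtnc704}; the standard Perron--Frobenius equivalence (see e.g.~\cite{Nag} Section~C-III) then gives $(S^V_{t_0/2} \one_W)(z) > 0$ for $\sigma$-almost every $z \in \Gamma$, contradicting the vanishing on $G$.

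The main obstacle is that $\Gamma$ is allowed to be disconnected, which blocks a direct appeal to Proposition~\ref{pdtnc522}, and moreover $T^V$ is not known to be a holomorphic semigroup in the present $L_\infty$-coefficient generality, so the argument from \ref{pdtnc522-1}$\Rightarrow$\ref{pdtnc522-4} cannot be transferred to $C(\Gamma)$. The device of propagating the zero set through one step of the Chapman--Kolmogorov identity and then invoking the $L_2$-irreducibility of $S^V$ (which does inherit holomorphy from its self-adjoint generator) plays the role of these missing structural features.
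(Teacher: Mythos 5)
Your argument is correct, and it does take a genuinely different route from the paper. The paper's proof is the single line ``Apply Proposition~\ref{pdtnc522}\ref{pdtnc522-1}$\Rightarrow$\ref{pdtnc522-4}'': once Theorem~\ref{tdtnc705} has established irreducibility of $T^V$ on $C(\Gamma)$, that implication gives the conclusion directly. Your version avoids this by working only with ingredients already in hand on $L_2(\Gamma)$: the continuous kernel (Corollary~\ref{cdtnc330}), the pointwise strict positivity of $\varphi_1$ (Theorem~\ref{tdtnc701}), and the $L_2$-irreducibility of $S^V$ (Proposition~\ref{pdtnc704}). You propagate the assumed zero through one Chapman--Kolmogorov step, locate a nontrivial open set $G$ on which $k_{t_0/2}(z_0,\cdot)$ is positive (using $\varphi_1$), and then contradict quasi-interiority of $S^V_{t_0/2}\one_W$ in $L_2(\Gamma)$. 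That is a clean, self-contained, and somewhat more elementary argument, since it only invokes the Perron--Frobenius theory for holomorphic semigroups on $L_2$, where holomorphy is automatic from self-adjointness.

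One point worth correcting in your commentary. You state that the disconnectedness of $\Gamma$ ``blocks a direct appeal to Proposition~\ref{pdtnc522}'' and that the lack of a known holomorphic $C_0$-semigroup structure on $C(\Gamma)$ means the implication \ref{pdtnc522-1}$\Rightarrow$\ref{pdtnc522-4} cannot be transferred. Neither concern is actually an obstruction. Inspecting the proof of \ref{pdtnc522-1}$\Rightarrow$\ref{pdtnc522-4}, connectedness of $K$ is never used there (it enters only in the converse implication \ref{pdtnc522-2}$\Rightarrow$\ref{pdtnc522-1}), so the implication applies verbatim once $T^V$ is known to be irreducible, which is what Theorem~\ref{tdtnc705} supplies. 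As for holomorphy: the proof only requires that $z\mapsto S^c_z$ extend holomorphically to a sector with values in $\cl(C(K))$, not that $S^c$ be a $C_0$-semigroup. That extension is automatic from the standing hypotheses of Proposition~\ref{pdtnc522}: since the generator $-A$ is self-adjoint, $z\mapsto S_z$ is holomorphic with values in $\cl(L_2(K))$ on the open right half-plane, and composing with the bounded map $S_{t_0}\colon L_2(K)\to C(K)$ for a small fixed $t_0>0$ yields holomorphy of $z\mapsto S^c_z$ on any subsector with $\RRe z>t_0$; varying $t_0$ gives the full sector. So the paper's one-line proof is sound; your alternative simply trades the $C(\Gamma)$-level machinery for an equivalent $L_2$-level argument plus the kernel computation, and both are valid.
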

\begin{proof}
Apply Proposition~\ref{pdtnc522}\ref{pdtnc522-1}$\Rightarrow$\ref{pdtnc522-4}.
\end{proof}

We do not know whether $T^V$ is a $C_0$-semigroup 
(unless the $a_{kl}$ are Lipschitz continuous, see Theorem~\ref{tdtnc215}\ref{tdtnc215-3}).

\subsection*{Acknowledgements}
The first-named author is most grateful for the hospitality extended
to him during a fruitful stay at the University of Auckland and the 
second-named author
for a wonderful stay at the University of Ulm.
Part of this work is supported by an
NZ-EU IRSES counterpart fund and the Marsden Fund Council from Government funding,
administered by the Royal Society of New Zealand.
Part of this work is supported by the
EU Marie Curie IRSES program, project `AOS', No.~318910.

\end{document}